\documentclass[11pt]{amsart}
\usepackage{amsmath, amssymb, amscd, mathrsfs, slashed, enumerate, url}
\usepackage{marginnote}
\usepackage[all,cmtip]{xy}
\usepackage{multicol}
\usepackage{indentfirst}
\usepackage{latexsym}
\usepackage{bm}
\usepackage{graphicx}
\usepackage{subfigure}
\usepackage{float}
\usepackage{verbatim}
\usepackage{comment}
\usepackage[top=1in, bottom=1in, left=1.25in, right=1.25in]{geometry}
\usepackage{epsfig,dsfont,amsthm,amsfonts,amsbsy}
\usepackage[numbers]{natbib}         
\usepackage[colorlinks]{hyperref}

\newcommand{\note}[1]{\marginpar{\raggedright\if@twoside\ifodd\c@page\raggedleft\fi\fi\sf\scriptsize {#1}}}
\newcommand{\gpp}{\mathfrak{g}_P}

\newcommand{\oti}{\otimes}
\newcommand{\GP}{\mathcal{G}_P}

\newcommand{\MD}{\mathcal{D}}

\newcommand{\MM}{\mathcal{M}}

\newcommand{\MO}{\mathcal{O}}

\newcommand{\Vol}{\mathrm{Vol}}

\newtheorem{theorem}{Theorem}[section]

\newtheorem{corollary}[theorem]{Corollary}

\newtheorem{definition}[theorem]{Definition}
\newtheorem{example}[theorem]{Example}

\newtheorem{lemma}[theorem]{Lemma}

\newtheorem{proposition}[theorem]{Proposition}
\newtheorem*{remark}{Remark}

\newcommand{\MC}{\mathcal{C}}
\newcommand{\Tr}{\mathrm{Tr}}
\newcommand{\pab}{\bar{\partial}}
\newcommand{\st}{\star}
\newcommand{\we}{\wedge}
\newcommand{\pa}{\partial}
\newcommand{\Bp}{\partial_{\bz}}

\newcommand{\RP}{\mathbb R^+}
\newcommand{\EBE}{extended Bogomolny equations\;}
\newcommand{\ti}{\times}
\newcommand{\SLR}{SL(2,\mathbb{R})}
\newcommand{\Si}{\Sigma}
\newcommand{\bz}{\bar{z}}
\newcommand{\vp}{\varphi}
\newcommand{\MA}{\mathcal{A}}
\newcommand{\ME}{\mathcal{E}}
\newcommand{\da}{\dagger}
\newcommand{\al}{\alpha}

\newcommand{\na}{\nabla}
\newcommand{\ep}{\epsilon}
\newcommand{\hA}{\widehat{A}}

\newcommand{\hP}{\widehat{\Phi}}
\newcommand{\bpa}{\bar{\pa}}
\newcommand{\hu}{\hat{u}}
\newcommand{\yrz}{y\rightarrow 0}

\newcommand{\Hom}{\mathrm{Hom}}
\newcommand{\be}{\beta}
\newcommand{\hB}{\hat{B}}
\newcommand{\bb}{\bar{b}}

\newcommand{\calC}{\mathcal C}

\newcommand{\calL}{\mathcal L}

\newcommand{\calU}{\mathcal U}

\newcommand{\calX}{\mathcal X}
\newcommand{\del}{\partial}
\newcommand{\RR}{\mathbb R}
\newcommand{\CC}{\mathbb C}
\newcommand{\wh}{\widehat}
\newcommand{\Fusi}{\mathrm{Hit}}
\newcommand{\Fusic}{\mathrm{Hit}^c}
\newcommand{\NP}{\mathrm{NP}}
\newcommand{\KS}{\mathrm{KS}}
\newcommand{\EBEt}{\mathrm{EBE}}
\newcommand{\wt}{\widetilde}


\usepackage[utf8]{inputenc}
\usepackage[english]{babel}
\usepackage{fancyhdr}

\begin{document}
\title[The Extended Bogomolny Equations]{The Extended Bogomolny Equations and Generalized Nahm Pole Boundary Condition}
\author{Siqi He} 
\address{Department of Mathematics, California Institute of Technology\\Pasadena, CA, 91106}
\email{she@caltech.edu}
\author{Rafe Mazzeo}
\address{Department of Mathematics, Stanford University\\Stanford,CA 94305 USA}
\email{rmazzeo@stanford.edu}

\begin{abstract}
In this paper we develop a Kobayashi-Hitchin type correspondence between solutions of the extended Bogomolny equations on 
$\Sigma\times \RP$ 
with Nahm pole singularity at $\Sigma \times \{0\}$ and the Hitchin component of the stable $SL(2,\mathbb{R})$ Higgs bundle;
this verifies a conjecture of Gaiotto and Witten. We also develop a partial Kobayashi-Hitchin correspondence for solutions with a 
knot singularity in this program, corresponding to the non-Hitchin components in the moduli space of stable $SL(2,\mathbb{R})$ 
Higgs bundles. We also prove existence and uniqueness of solutions with knot singularities on $\mathbb{C}\ti\RP$.
\end{abstract}
\maketitle

\begin{section}{Introduction}
An intriguing proposal by Witten \cite{witten2011fivebranes} interprets the Jones polynomial and Khovanov homology of knots on a 
$3$-manifold $Y$ by counting solutions to certain gauge-theoretic equations, see \cite{KapustinWitten2006}, \cite{witten2011fivebranes}, 
\cite{Haydys2015Fukaya} for much more on this.  In this picture, the Jones polynomial for a knot $K \subset Y$ is realized by a count of solutions 
to the Kapustin-Witten equations on $Y\ti\RP$ satisfying a new type of singular boundary conditions. We refer \cite{gaiotto2012knot}, 
\cite{Witten2014LecturesJonesPolynomial}, \cite{Witten2016LecturesGaugeTheory} for a more detailed explanation, along with \cite{MazzeoWitten2013}, 
\cite{MazzeoWitten2017} and \cite{He2017} for the beginnings of the analytic theory for this program.  In the absence of a knot, the problem 
is still of interest and may lead to $3$-manifold invariants.  When $K = \emptyset$, the singular boundary conditions are called the Nahm pole
boundary conditions, while in the presence of a knot, they are called the generalized Nahm pole boundary conditions, or Nahm pole boundary
conditions with knot singularities.  For simplicity, we usually just refer to solutions with Nahm pole or with Nahm pole and knot singularities.

\begin{figure}[H]
\centering
\includegraphics[width=0.5\textwidth]{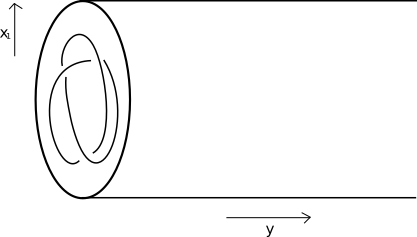}
\caption{A knot placed at the boundary of $Y\ti\RP$}
\label{KWpic1}
\end{figure}

There are two main sets of technical difficulties in this program. The first arises from the singular boundary conditions, which turn the problem into
one of nonstandard elliptic type. These are now understood, see \cite{MazzeoWitten2013}, \cite{MazzeoWitten2017}.  A more serious difficulty 
involves whether it is possible to prove compactness of the space of solutions to the Kapustin-Witten (KW) equations. An important first step 
was accomplished by Taubes in \cite{Taubes20133manifoldcompactness}, \cite{taubes2013compactness},
but at present there is no understanding about how the Nahm pole boundary conditions interact with these compactness issues. 

Gaiotto and Witten \cite{gaiotto2012knot} proposed the study of a more tractable aspect of this problem. 
Suppose that we stretch the $3$-manifold across a separating Riemann surface $\Si$ in a Heegard decomposition of $Y$ which meets the knot 
transversely. In the limit, $Y$ separates into two components $Y^\pm$ and zooming in on the transition region leads to a problem on
$\Si \ti \RR \ti \RP$ which is independent of the $\RR$ direction normal to the separating surface. We are thus led to study the dimensionally reduced 
problem, called the extended Bogomolny equations, on $\Sigma\ti\RP$ with the induced singular boundary condition.   

A further motivation for studying the moduli space of solutions of the \EBE on $\Sigma\ti\RP$ is provided by the Atiyah-Floer conjecture \cite{Atiyah1987AFloer}. 
In terms of a handlebody decomposition $Y^3= Y^+ \cup_{\Sigma} Y^-$, the Atiyah-Floer conjecture states that the instanton Floer homology of $Y$ 
can be recovered from Lagrangian Floer homology of two Lagrangians associated to the handlebodies in the moduli space $\MM(\Sigma)$
of flat $SU(2)$ connection of $\Sigma$.  These Lagrangians consist of the flat connections which extend into $Y^+$ or $Y^-$.  Another way to
view $\MM(\Si)$ is as the moduli space for the reduction of the anti-selfdual equations to $\Sigma$.  One then expects to use Lagrangian intersectional 
Floer theory to define invariants. We refer to \cite{FukayaDaemi2017atiyah}, \cite{ManolescuAbouzaid} for recent progress on this.

In any case, we are presented with the problem of studying the dimensionally reduced Kapustin-Witten equations on $\Sigma\ti\RP$ with generalized
Nahm pole boundary conditions. We describe these now; their derivation and further explicit computations appear in Section 2 below. 
Let $P$ be a principal $SU(2)$ bundle over $\Si$, pulled back to $\Si \ti \RP$, and $\gpp$ its adjoint bundle. The \EBE are the following set of equations
for a connection $A$ on $P$, and $\gpp$-valued $1$- and $0$-forms $\phi$ and $\phi_1$, respectively: 
\begin{equation}
\begin{split}
F_A-\phi\we\phi&=\st d_A\phi_1,\\
d_A\phi&=\st[\phi,\phi_1],\\
d^{\st}_A\phi&=0.
\label{EBE}
\end{split}
\end{equation} 
The knot corresponds in this setting to where the stretched knot crosses $\Si$, or in other words, to a set of marked points 
$\{p_1, \ldots, p_N\}$ on $\Si$, see Figure \ref{KWpic2}. 

\begin{figure}[H]
\centering
\includegraphics[width=0.4\textwidth]{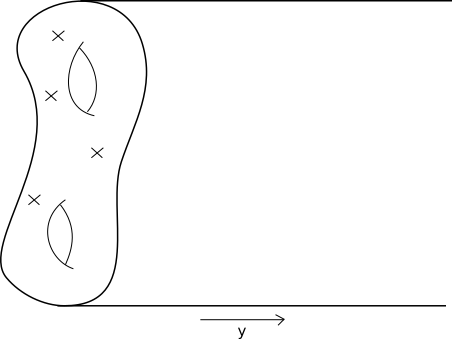}
\caption{$\Si \ti \RP$; the `knots' correspond to points on $\Si\ti\{0\}$}
\label{KWpic2}
\end{figure}

In the following we the standard linear coordinate $y$ on $\RP$.  Define $\MM^{\EBEt}_{\NP}$ and $\MM^{\EBEt}_{\KS}$ to be the moduli spaces 
of solutions to \eqref{EBE} which satisfy the Nahm pole, and generalized Nahm pole, boundary conditions at $y=0$, and which converge to an 
$SL(2,\mathbb{R})$ flat connection as $y \to \infty$. For the second of these spaces, we tacitly restrict to the subset of solutions which
are compatible with a $SL(2,\mathbb{R})$ structure, as explained more carefully in Section 3.  The subscripts NP and KS here stand for
`Nahm pole' and `knot singularity'. We also write $\MM$ for the moduli space of stable $SL(2,\mathbb{R})$ Higgs pairs
and recall that $\MM = \MM^{\Fusi} \sqcup \MM^{\Fusic}$, where the first term on the right is the Fuchsian, or Hitchin, component and 
$\MM^{\Fusic}$ the union of the other components. It is well-known that $\MM^{\Fusi}$ identified with a finite cover of the Techm\"uller space for $\Si$. 

In the spirit of Donaldson-Uhlenbeck-Yau \cite{donaldson1985anti},\cite{uhlenbeck1986existence}, Gaiotto and Witten \cite{gaiotto2012knot} 
define maps
\begin{equation}
\begin{split} I_{\NP}&:\MM^{\EBEt}_{\NP}\to \MM^{\Fusi},\\ I_{\KS}&:\MM^{\EBEt}_{\KS}\to \MM^{\Fusic}
\end{split}
\end{equation}
which we recall in Section 3. They conjecture that $I_{NP}$ is one-to-one.  We prove this here and also describe the map $I_{KS}$. 
Our main result is:
\begin{theorem}
(i) The map $I_{\NP}$ is bijection. Explicitly, to every element in the Hitchin component $\MM^{\Fusi}$, there exists a solution to \eqref{EBE} 
satisfying the Nahm pole boundary condition. If two solutions to \eqref{EBE} satisfying these boundary conditions map to the same element 
in $\MM^{\Fusi}$ under $I_{\NP}$, then they are $SU(2)$-gauge equivalent. 

(ii) The map $I_{\KS}$ is two-to-one: for every element in the $\MM^{\Fusic}$, there exist two solutions to \eqref{EBE} which satisfy generalized
Nahm pole boundary conditions with knot singularities and which are compatible with the $SL(2,\mathbb{R})$ structure as $y\to\infty$. 
Any solution to \eqref{EBE} satisfying these boundary and compatibility conditions is equal, up to $SU(2)$-gauge equivalence, with 
one of these two solutions.
\end{theorem}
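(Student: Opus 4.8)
The plan is to follow the Donaldson--Uhlenbeck--Yau strategy invoked in the introduction: treat the real content of the \EBEt\ as a moment-map equation that is solved by choosing the correct Hermitian metric within a fixed complex-gauge orbit. A point of $\MM^{\Fusi}$ (resp.\ $\MM^{\Fusic}$) fixes the holomorphic data on $\Si$---an $\SLR$-Higgs bundle with Higgs field $\vp$---which I pull back to $\Si\ti\RP$. In the complex structure set up in Section 3, the system \eqref{EBE} splits into holomorphicity conditions preserved by the complexified gauge group and a single real moment-map equation; solving the latter amounts to finding a Hermitian metric $H$ on the pulled-back bundle over $\Si\ti\RP$ satisfying a nonlinear second-order elliptic equation. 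The two boundary conditions become prescribed asymptotics for $H$: the Nahm pole model as $\yrz$ and the harmonic-metric (nonabelian Hodge) behavior of the limiting flat $\SLR$ connection as $\yri$. Part (i) thus reduces to existence and uniqueness of such an $H$, since $I_{\NP}$ is bijective exactly when this metric exists and is unique.

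For existence I would either run the Donaldson heat flow $H^{-1}\partial_t H=-\Lambda$, where $\Lambda$ is the moment-map defect, or minimize the associated Donaldson functional directly over metrics with the prescribed asymptotics. In either route the decisive analytic input is a uniform $C^0$ estimate bounding the distance of $H$ from a fixed reference metric that equals the Nahm pole model near $y=0$ and the harmonic metric near $y=\infty$; stability of the Higgs bundle furnishes the coercivity (properness of the functional) that yields this bound and prevents degeneration, precisely as in the compact Hitchin-equation case. Long-time existence and convergence of the flow---or the existence of a minimizer---then produce a solution of the reduced equation, hence of \eqref{EBE}.

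Uniqueness rests on convexity of the Donaldson functional along geodesics in the space of Hermitian metrics. Two solutions with the same image under $I_{\NP}$ share their holomorphic data and so differ by a complex gauge transformation $g$; joining the corresponding metrics by a geodesic and using that both endpoints are critical forces $g$ to be covariantly constant and unitary, so the solutions are $SU(2)$-gauge equivalent. The subtle point is that the integrations by parts behind both the $C^0$ estimate and the convexity argument generate boundary terms at $y=0$ and $y=\infty$; these must be shown to vanish using the precise weighted asymptotics from the existence step, which in turn rely on the Fredholm and linear theory of the Nahm pole developed in \cite{MazzeoWitten2013}, \cite{MazzeoWitten2017}.

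Part (ii) runs through the same machinery, except that the nilpotent structure of the Higgs field on a non-Hitchin component carries a discrete ambiguity: compatibility with the $\SLR$ structure as $\yri$ requires a lift (a sign/square-root choice localized at the knot points $p_1,\ldots,p_N$) that leaves the conjugacy class in $\MM^{\Fusic}$ unchanged. Each of the two lifts yields a genuine solution by the existence argument, and they are not $SU(2)$-gauge equivalent because they are distinguished by this discrete choice, yet $I_{\KS}$ forgets it; the uniqueness argument rules out any further solutions, giving the two-to-one count. I expect the principal obstacle throughout to be the boundary analysis at the Nahm pole: showing that the constructed metric has exactly the Nahm pole singularity---neither milder nor more severe---and that every boundary contribution in the a priori and convexity estimates is controlled in the correct weighted function spaces.
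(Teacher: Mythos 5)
Your overall framing---fix the holomorphic data of an $\SLR$ Higgs bundle, pull it back to $\Si\ti\RP$, and solve the remaining real moment-map equation by finding a Hermitian metric with Nahm pole asymptotics at $y=0$ and harmonic-metric asymptotics at $y=\infty$---is indeed the paper's framework. But there is a genuine gap in part (ii): you have misidentified the discrete ambiguity behind the two-to-one count. In the paper, compatibility with the $\SLR$ structure (Definition \ref{compatiblewithSL(2R)limit}) means that the vanishing line bundle $L'$ (the subbundle of sections decaying as $\yrz$) is either $L$ or $L^{-1}$, and the two solutions over a fixed Higgs bundle $(L^{-1}\oplus L,\vp)$ correspond to these two \emph{global} choices. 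They are not related by any ``sign/square-root choice localized at the knot points'': the two solutions do not even have the same knot points. The knot divisor is the zero set of the induced map $P\in H^0(L'^{-2}\otimes K)$, so the choice $L'=L$ places knots at the zeros of $\al$ ($2g-2-2\deg L$ of them), while $L'=L^{-1}$ places them at the zeros of $\be$ ($2g-2+2\deg L$ of them). This is also why the compatibility hypothesis is imposed at all: for each choice of $L'$ it diagonalizes the problem and reduces \eqref{algebraEBE} to the scalar equation \eqref{scaleEBE}, and existence and uniqueness are then run separately for each of the two knot divisors. Without this identification your argument cannot produce the correct count, nor explain why the two solutions are inequivalent (in the paper this is immediate, since their singular sets differ).

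Beyond this, your analytic engine differs from the paper's in ways that leave the hardest steps unaddressed. For existence the paper does not run a heat flow or minimize a Donaldson functional---both would require renormalizing the functional at the Nahm pole and an unproven properness/coercivity statement in this noncompact, singular-boundary setting. Instead it exploits the scalar reduction: writing $H=\mathrm{diag}(h,h^{-1})$ and $u=\log h$, the equation becomes \eqref{scaleEBE}, and solutions are produced by the barrier method (Proposition \ref{KWE}), sandwiching between sub- and supersolutions built from polyhomogeneous approximate solutions near $y=0$ (Borel-summed formal expansions off the model Nahm pole and knot solutions $U_n$) glued to the limiting Hitchin solution $u_\infty$ at $y=\infty$. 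For uniqueness the paper does not use geodesic convexity of a functional but Donaldson's distance $\sigma(H_1,H_2)=\Tr(H_1^{-1}H_2)+\Tr(H_2^{-1}H_1)-4$: it is subharmonic whenever $H_1,H_2$ are solutions, the expansions of Propositions \ref{Nahmpoleexpansion}, \ref{Knotsingularityexpansion} and \ref{HermitianmetricUniqueness} force $\sigma\to 0$ as $\yrz$ away from the knots and as $\yri$, and near the knots, where $\sigma$ is merely bounded, a comparison with small multiples of Poisson kernels kills it. The boundary-term control you defer to ``the existence step'' is exactly where the content lies; your convexity argument, even if salvageable, would need all of the same asymptotic input that the paper's $\sigma$-plus-maximum-principle route is designed to supply.
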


We define in Section 3 what it means for solutions of \eqref{EBE} with knot singularities to be compatible with the $SL(2,\mathbb{R})$ structure 
as $y\to\infty$. This condition allows \eqref{EBE} to be reduced to a scalar equation.  There are almost surely solutions to \eqref{EBE}
which do not satisfy this condition. 

The expectation, explained in \cite{witten2011fivebranes}, is that the Jones polynomial should be recovered by counting solutions to the \EBE
on $\mathbb{R}^3\ti\RP$, with a knot singularity at some $K \subset \RR^3$. Thus, as a dimensionally reduced version of this problem,
we also consider these equations on $\mathbb{C}\ti\RP$: 
\begin{theorem}
Given any positive divisor $D=\sum n_ip_i$ on $\CC$, there exists a solution to \eqref{EBE} which has knot singularities of order $n_i$ at $p_i$.
This solution is unique to the scalar equation. 
\end{theorem}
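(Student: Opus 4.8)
The plan is to use the $SL(2,\mathbb{R})$ compatibility condition to collapse \eqref{EBE} on $\CC\times\RP$ into a single scalar equation, and then to solve that equation by a barrier argument. Following the reduction of Section~3, the divisor $D=\sum n_i p_i$ is encoded by the polynomial $P(z)=\prod_i (z-p_i)^{n_i}$, whose zero divisor is precisely $D$, and a solution compatible with the $SL(2,\mathbb{R})$ structure is determined by one real function $u$ on $\CC\times\RP$. The resulting equation is a second order semilinear elliptic equation of Liouville/vortex type, of the schematic form
\[ \Delta u \;=\; e^{2u}-|P(z)|^{2}e^{-2u}, \]
up to the precise $y$-weights produced by the Nahm pole reduction; the essential structural fact is that the right-hand side is \emph{strictly increasing} in $u$. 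The prescribed data are three-fold: the Nahm pole asymptotics as $y\to 0$ on $\{y=0\}\setminus\{p_i\}$, the order-$n_i$ knot singularity at each $(p_i,0)$ (where $P$ vanishes to order $n_i$), and decay as $y\to\infty$ and as $|z|\to\infty$. The theorem asserts solvability inside the class of $u$ realizing these asymptotics, together with uniqueness in that class.

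For existence I would run the method of sub- and super-solutions. The first task is to build an ordered pair $u_-\le u_+$, each carrying all the required singular behavior. Near each marked point one patches in the explicit order-$n_i$ singular solution of the model equation on a half-space; along $\{y=0\}$ away from the $p_i$ one patches in the model Nahm pole profile; and on the complementary region one uses a fixed background, correcting each patched function by a small additive term so that it becomes a genuine sub- (respectively super-) solution. Because the nonlinearity is monotone, monotone iteration between $u_-$ and $u_+$ (equivalently, Perron's method) then converges to a solution $u$ with $u_-\le u\le u_+$, smooth away from the singular locus by interior elliptic regularity. The squeeze $u_-\le u\le u_+$, combined with the fact that $u_\pm$ match the same local models, forces $u$ to have exactly the prescribed Nahm pole, knot, and decay asymptotics.

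Uniqueness ``to the scalar equation'' then follows from the maximum principle. Given two solutions $u_1,u_2$ with identical prescribed data, the difference $w=u_1-u_2$ satisfies a linear equation $\Delta w = c\,w$ with $c=\int_0^1 \partial_u f\,dt\ge 0$, since $f$ is increasing in $u$. Both solutions share the same leading behavior at each end and at each $(p_i,0)$, so $w\to 0$ as $y\to 0$, as $y\to\infty$, as $|z|\to\infty$, and at the knot points. The maximum principle on $\CC\times\RP$, applied once this decay at all ends is verified, then gives $w\equiv 0$. (Conceptually, the monotonicity makes the associated energy functional convex, which is the underlying reason both existence and uniqueness hold.)

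The hard part will be the simultaneous construction and control of the barriers at the several ends of $\CC\times\RP$. The domain is noncompact in $z$ and as $y\to\infty$, has the singular boundary $\{y=0\}$, and contains the genuinely singular points $(p_i,0)$ at which the order-$n_i$ local model must be matched precisely; moreover $|P(z)|$ grows as $|z|\to\infty$, so the behaviour of the barriers at spatial infinity must be reconciled with this growth. Producing $u_\pm$ that carry the correct Nahm pole profile on $\{y=0\}\setminus\{p_i\}$, the correct order-$n_i$ blow-up at the $p_i$, and the correct decay at infinity, and verifying that they are genuinely ordered sub/super-solutions, is where essentially all of the analytic effort lies; once such barriers are in hand, both existence and uniqueness follow from the monotone structure as above.
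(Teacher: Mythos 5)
Your overall strategy coincides with the paper's: reduce to a scalar equation via the $SL(2,\mathbb{R})$-compatible ansatz, build barriers by patching local model solutions, produce a solution by monotone iteration on an exhaustion, and prove uniqueness by the maximum principle using monotonicity of the nonlinearity. One small correction before the main point: on $\CC\ti\RP$ the Higgs field is nilpotent, $\vp=\begin{pmatrix} 0 & p(z) \\ 0 & 0\end{pmatrix}$, so the scalar equation is \eqref{HEBEEuc}, namely $-(\Delta+\pa_y^2)u+|p(z)|^2e^{2u}=0$: the polynomial weights the $e^{2u}$ term and there is no $e^{-2u}$ term at all. The monotone structure you rely on survives, but where $|p|^2$ sits is not cosmetic, since it dictates the analysis at spatial infinity --- which is exactly where your genuine gap lies.

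You defer the behavior at spatial infinity (``reconciled with the growth of $|P(z)|$'') as the hard part, but that deferral hides both the key existence idea and a hypothesis without which the uniqueness claim is false. For existence, the paper radially compactifies $\CC\ti\RP$ and observes that, because $p(z)\sim z^{N_0}$ with $N_0=\sum n_i$, the correct model at the face $R=\infty$ is the same Witten knot model $U_{N_0}$, now read as $R\to\infty$: the solution does not decay there, it behaves like $-(N_0+1)\log R-\log\sin\psi+\MO(1)$, and the supersolution near that face is $R^{-\epsilon}\mu_0^{N_0}$, where $\mu_0^{N_0}$ is the positive ground state of the link operator $J_{N_0}$ (positivity of its eigenvalue is what makes the barrier computation close). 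Nothing in your patching scheme produces this face model, and a barrier that literally ``decays'' at infinity cannot lie above a solution that blows down logarithmically. For uniqueness the issue is sharper: the class must be pinned to $u=U_{N_0}+\MO(R^{-\epsilon})$ as $R\to\infty$, because a loose condition at infinity makes uniqueness false. The paper's remark gives the counterexample: already for $p\equiv 1$, besides $-\log y$ there is the one-parameter family $\log\bigl(C/\sinh(Cy)\bigr)$, each member of which has a Nahm pole at $y=0$ and tends to $-\infty$ as $y\to\infty$ (the ``real symmetry breaking'' solutions). So under your stated data (``decay as $y\to\infty$ and as $|z|\to\infty$'') the difference $w=u_1-u_2$ need not tend to zero at the infinite end, and the maximum-principle argument does not close. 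Once the class is specified by the $U_{N_0}$ normalization (and polyhomogeneity of solutions is invoked so that $w\to 0$ at the faces $y=0$ and at the knot points), your uniqueness argument is exactly the paper's.
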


\textbf{Acknowledgements.}  The first author wishes to thank Ciprian Manolescu, Qiongling Li and Victor Mikhaylov. 
The second author is grateful to Edward Witten for introducing him to this problem originally and for his many patient explanations. 
The second author has been supported by the NSF grant DMS-1608223.

\end{section}


\begin{section}{Preliminaries}
We begin by considering various ways in which the extended Bogomolny equations \eqref{EBE} may be interpreted. 
\begin{subsection}{$S^1$-Invariant Kapustin-Witten Equations}
Let $X$ be a smooth 4-manifold with boundary, $P$ an $SU(2)$ bundle over $X$ and $\gpp$ the adjoint bundle of $P$. 
If $\hA$ is a connection on $P$ and $\hP$ is a $\gpp$-valued one-form, then the Kapustin-Witten equations for
the pair $(\hA, \hP)$ are 
\begin{equation}
\begin{split}
F_{\hA}-\hP\we\hP+\st d_{\hA}\hP&=0,\\
d_{\hA}^{\st}\hP&=0.
\label{KW}
\end{split}
\end{equation}

Consider the special case where $X=S^1\ti Y$ is the product of a circle and a $3$-manifold, and where 
$(\hA, \hP)$ is an $S^1$ invariant solution to \eqref{KW}. We then set
\begin{equation}
\hA=A+A_1dx_1,\;\hP=\phi+\phi_1dx_1,
\end{equation}
where $A,\phi\in\Omega_Y^1(\gpp)$ and $A_1,\phi_1\in\Omega_Y^0(\gpp)$ are independent of $x_1 \in S^1$. Then \eqref{KW} becomes
\begin{equation}
\begin{split}
F_A-\phi\we\phi+\st d_A\phi_1+\st[A_1,\phi]&=0,\\
\st d_A\phi -[\phi_1,\phi]-d_A A_1&=0,\\
d_A^{\st}\phi-[A_1,\phi_1]&=0.
\label{GEBE}
\end{split}
\end{equation}

Denoting the quantities on the left of these three qualities by $\calX_1$, $\calX_2$ and $\calX_3$, respectively, we define the expressions
\begin{equation}
\begin{split}
I_0 &= \int_Y |\calX_1|^2 + |\calX_2|^2 + |\calX_3|^2 \\ 
I_1&=\int_Y|F_A-\phi\we\phi+\st d_A\phi_1|^2+|\st d_A\phi -[\phi_1,\phi]|^2+|d_A^{\st}\phi|^2,\\
I_2&=\int_{Y}|[A_1,\phi]|^2+|d_A A_1|^2+|[A_1,\phi_1]|^2,\\
\end{split}
\end{equation}
and also, if $Y$ is a 3-manifold with boundary, 
\[
I_3=-\int_{\pa Y}\Tr(d_A A_1\we\phi_1)-\int_{\pa Y}\Tr([A_1,\phi_1]\we \st \phi).
\]
After a straightforward calculation, assuming that all integrations are valid, we have
\begin{equation}
I_0=I_1+I_2+I_3.
\end{equation}
Since $I_0,I_1,I_2$ are all nonnegative, we deduce the 
\begin{proposition}
If $(A_1,\phi_1)$ satisfies a boundary condition which guarantees that $I_3=0$, and if $(A,\phi)$ is irreducible, then 
$A_1=0$ and \eqref{GEBE} reduces to the equations corresponding to $I_1=0$.
\label{prop1}
\end{proposition}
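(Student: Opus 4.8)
The plan is to derive the conclusion directly from the already-established identity $I_0 = I_1 + I_2 + I_3$ together with the nonnegativity of $I_0$, $I_1$ and $I_2$. First I would note that a solution of \eqref{GEBE} is precisely the statement $\calX_1 = \calX_2 = \calX_3 = 0$, equivalently $I_0 = 0$. Invoking the hypothesis that the boundary condition forces $I_3 = 0$, the identity reduces to $0 = I_1 + I_2$; since each summand is an integral of squared norms and hence nonnegative, I would conclude $I_1 = 0$ and $I_2 = 0$ separately.

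Next I would unpack $I_2 = 0$. As $I_2$ is a sum of three integrated squared norms, its vanishing forces $[A_1,\phi] = 0$, $d_A A_1 = 0$ and $[A_1,\phi_1] = 0$ pointwise on $Y$. The first two of these say that the $\gpp$-valued $0$-form $A_1$ lies in the kernel of the infinitesimal gauge action at $(A,\phi)$, i.e. $A_1$ is an infinitesimal stabilizer of the pair. The irreducibility hypothesis is exactly the statement that this stabilizer is trivial, so I would conclude $A_1 = 0$.

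Finally, substituting $A_1 = 0$ into \eqref{GEBE} kills the three terms $\st[A_1,\phi]$, $d_A A_1$ and $[A_1,\phi_1]$, leaving $F_A - \phi\we\phi + \st d_A\phi_1 = 0$, $\st d_A\phi - [\phi_1,\phi] = 0$ and $d_A^{\st}\phi = 0$; this is exactly the vanishing locus $I_1 = 0$, which up to sign conventions on $\phi_1$ is the extended Bogomolny equations \eqref{EBE}.

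I expect the only substantive step to be the passage from $I_2 = 0$ to $A_1 = 0$, since this is where irreducibility is used; everything else is bookkeeping on the decomposition. The main care needed is to fix the precise meaning of the irreducibility being invoked—triviality of $\{\xi \in \Omega_Y^0(\gpp) : d_A\xi = 0,\ [\phi,\xi] = 0\}$—and to make sure the regularity of $A_1$ is good enough that $d_A A_1 = 0$ and $[A_1,\phi] = 0$ may be read as genuine pointwise identities; for smooth solutions this is automatic.
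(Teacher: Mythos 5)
Your proof is correct and follows exactly the paper's argument: the paper deduces the proposition in one line from the identity $I_0 = I_1 + I_2 + I_3$ and the nonnegativity of $I_0, I_1, I_2$, and your write-up simply makes explicit the same steps (vanishing of $I_1$ and $I_2$ separately, then using $I_2=0$ plus irreducibility to kill $A_1$). The point you flag as substantive—that $d_A A_1 = 0$ and $[A_1,\phi]=0$ place $A_1$ in the infinitesimal stabilizer of $(A,\phi)$, which irreducibility makes trivial—is precisely the content the paper leaves implicit.
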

The case of principal interest in this paper is when $Y=\Si\ti\RP_y$ and $(\hA,\hP)$ satisfy the Nahm pole boundary conditions
at $y=0$ and converge as $y \to \infty$ to a flat $SL(2,\mathbb{R})$ connection.  The conditions of this proposition 
are then satisfied.  We recall the claim, see \cite[Page 36]{taubes2013compactness} as well as \cite[Corollary 4.7]{He2017}, that for
solutions satisfying these boundary conditions, the $dy$ component of $\phi$ vanishes. Results from \cite{MazzeoWitten2013} show
that as $y\searrow 0$, $A_1\sim y^2$ and $\phi_1\sim \frac{1}{y}$, hence $\st\phi=0$ at $y=0$. In addition, $A_1$ and 
$\phi_1$ both converge to $0$ as $y \to \infty$.  These facts together imply that $I_3$ vanishes at both $y=0$ and  $y = \infty$, 
so Proposition \ref{prop1} holds.

If an $S^1$-invariant solution satisfies the Nahm pole boundary condition at $y=0$ and converges to a flat 
$SL(2,\mathbb{R})$ connection as $y \to\infty$, then the pair $(A,\Phi)$ satisfies the so-called extended 
Bogomolny equations on $\Sigma\ti\RP$:
\begin{equation}
\begin{split}
F_A-\phi\we\phi&=\st d_A\phi_1,\\
d_A\phi&=\st[\phi,\phi_1],\\
d^{\st}_A\phi&=0.
\end{split}
\end{equation}
Here $A$ is a connection, $\phi\in\Omega^1(\gpp)$, $\phi_1\in\Omega^0(\gpp)$ and the $dy$ component of $\phi$ vanishes.

These equations reduce, when $\phi_1 = 0$, to the Hitchin equations, when $\phi = 0$, to the Bogomolny equations, 
and when $A=0$ and $\phi$ is independent of $\Si$, to the Nahm equations. Thus one expects that all known techniques 
for these special cases should be applicable to these hybrid equations as well.
\end{subsection}

\begin{subsection}{Hermitian Geometry}
Choose a holomorphic coordinate $z=x_2+ix_3$ on $\Sigma$ and let $y$ be the linear coordinate on $\RP$. 
In these coordinates, define $d_A=\nabla_2\, dx_2+\nabla_3\, dx_3+\nabla_y \, dy$ and $\phi=\phi_2\, dx_2+\phi_3 \, dx_3= 
\vp_z \, dz+\vp^{\da}_{\bz}\, d\bz$, where $\vp_z = \phi_2 - i \phi_3$; we also write $\vp=\vp_z dz$.  Using these, we can 
rewrite \eqref{EBE} in the ``three D's'' formalism: with $\MA_y=A_y-i\phi_1$, set
\begin{equation}
\begin{split}
\MD_1&=\na_2+i\na_3,\\
\MD_2&=\mathrm{ad}\, \vp = [\vp,\cdot ] ,\\
\MD_3&=\na_y-i\phi_1=\partial_y+\MA_y=\partial_y+A_y-i\phi_1.
\end{split}
\end{equation}
The adjoints of these operators are
\begin{equation}
\begin{split}
\MD_1^{\da}&=-\na_2+i\na_3,\\
\MD_2^{\da}&=-[\phi_2+i\phi_3,\cdot ],\\
\MD_3^{\da}&=-\na_y-i\phi_1.
\end{split}
\end{equation}
The \EBE can then be written in the alternate form 
\begin{equation}
[\MD_i,\;\MD_j]=0, \ i, j = 1, 2, 3, \ \mbox{and}\ \sum_{i=1}^3[\MD_i,\;\MD_i^\da]=0.
\label{algebraEBE}
\end{equation}
We write out the last of these, which is the most intricate. Noting that
\begin{equation}
\begin{split}
[\MD_1,\;\MD_1^\da]&=[\na_2+i\na_3,-\na_2+i\na_3]=2iF_{23},\\
[\MD_2,\;\MD_2^\da]&=-2i[\phi_2,\phi_3],\\
[\MD_3,\;\MD_3^\da]&=-2i\na_y\phi_1,
\end{split}
\end{equation}
we have
\[
\frac{1}{2i}\sum_{k=1}^3[\MD_i,\;\MD_i^\da]= F_{23}-[\phi_2,\phi_3]-\na_y\phi_1=0.
\]

As is standard for such equations, cf.\ \cite{witten2011fivebranes}, the smaller system $[\MD_i,\MD_j]=0$ is invariant under 
the complex ($SL(2;\mathbb{C})$-valued) gauge group $\GP^{\mathbb C}$, while the full system \eqref{algebraEBE} is invariant 
under the unitary gauge group, $\MD_i\to g^{-1}\MD_ig$, $g\in \GP$ and the final equation is a real moment map condition.
Following the spirit of Donaldson-Uhlenbeck-Yau \cite{donaldson1985anti},\cite{uhlenbeck1986existence},  we thus
expect that Hermitian geometric data from the $\GP^{\mathbb C}$-invariant equations play a role in solving the moment map equation.

Suppose that $E$ is a rank $2$ Hermitian bundle over $\Si \ti \RP$.  As we now explain, $\MD_1$ induces a holomorphic structure which
makes $E$ into a holomorphic bundle $\ME$; $\MD_2$ is then a $K_\Si$-valued endomorphism of $\ME$, while $\MD_3$ specifies 
a parallel transport in the $y$ direction. In terms of these, the equations $[\MD_i, \MD_j]=0$ have a nice geometric meaning.

Denote by $E_y:=E|_{\Si\ti \{y\}}$ the restriction of $E$ to each slice $\Sigma\ti \{y\}$. Since $\MD_1^2=0$ is always true for dimensional
reasons, the Newlander-Nirenberg theorem gives that $\MD_1$ induces a holomorphic structure on $E_y$ for each $y$, i.e.,  $\MD_1 = \pab$. 
A connection $A$ is compatible with this holomorphic structure if $A^{0,1}$ equals $\pab$.

Next, $[\MD_1,\MD_2]=0$ says that the endomorphism $\vp$ is holomorphic with respect to this structure, so 
$(E,\MD_1,\vp)$ is a Higgs pair over each slice. Finally, the equations $[\MD_2,\MD_3]=0$, $[\MD_1,\MD_3]=0$ 
show that this family of Higgs pairs is parallel in $y$, i.e., there is a specified identification of these objects at
different values of $y$.  

Following \cite{donaldson1985anti}, a data set for our problem consists of a rank two bundle $E$ over 
$\Sigma\ti\RP$ and a triplet of operators $\Theta = (\MD_1,\MD_2,\MD_3)$ on $\calC^{\infty}(E)$ satisfying 
\begin{itemize}
\item $\MD_1(fs) = \del_{\bz}f s + f \MD_1 s$, $\MD_3(fs) = (\del_{y}f) s + f \MD_3 s$ for 
$f\in \calC^{\infty}(\Sigma\ti\RP)$  and $s\in C^{\infty}(E)$; 
\item $\MD_2=[\vp, \cdot ]$ for some $\vp\in \Omega^1(\gpp)$; 
\item $[\MD_i,\;\MD_j]=0$ for all $i, j$. 
\end{itemize}

Given $(E,\Theta)$, a choice of Hermitian metric $H$ on $E$ determines Hermitian adjoints $\MD_i'$ of the operators $\MD_i$ by the
requirements that for any smooth functions $f$ and sections $s$: 
\begin{itemize}
\item $\MD_1'$ and $\MD_3'$ are derivations, i.e., $\MD_1' (fs) = (\del_{z} f) s + f \MD_1 s$, $\MD_3' (fs) = (\del_{y} f) s + f \MD_3 s$,
while $\MD_2(fs) = f \MD_2(s)$; 
\item $\pa_{\bz}H(s,s')=H(\MD_1 s,s')+H(s,\MD_1's'),\;\pa_{y}H(s,s')=H(\MD_3 s,s')+H(s,\MD_3's')$;
\item $H(\MD_2 s,s')+H(s,\MD_2^{'}s') = 0$
\end{itemize}


The moment map equation in \eqref{algebraEBE} can be regarded as an equation for the Hermitian metric $H$.  Indeed, setting
$\MD_y=\frac{1}{2}(\MD_3+\MD_3^{'})$, $\MD_z=\MD_1$ and $\MD_{\bz}=\MD_1^{'}$, we define a unitary connection $\MD_{A}$, and an
endomorphism-valued $1$-form $\phi$ and $0$-form $\phi_1$ on $(E,\Theta,H)$ by 
\begin{equation}
\begin{split}
\MD_A(s):&=\MD_1(s)dz+\MD_1^{'}(s)d\bz+\MD_y(s)dy,\\
[\phi,s]:&=[\MD_2,s]dz+[\MD_2^{'},s] d\bz,\\
\phi_1:&=\frac{i}{2}(\MD_3-\MD_3^{'}).
\end{split}
\label{relationship}
\end{equation}
We call $(A,\phi,\phi_1)$ a unitary triplet.  Note however that in an arbitrary trivialization of $E$, $(A,\phi,\phi_1)$ may not consist of unitary matrices. 
We recall a standard result \cite{atiyah1978geometry} which provides the link between connections in unitary and holomorphic frames. In the
following, and later, we refer to parallel holomorphic gauges. These are, as the moniker suggests, holomorphic gauges for each $E_y$ which
are parallel with respect to $\MD_3$. 
\begin{proposition}
With $(E,\Theta, H)$ as above, there is a unique triplet $(A,\phi,\phi_y)$ compatible with the unitary structure and 
with the structure defined by $\Theta$. In other words, in every unitary gauge, $A^{\st}=-A$, $\phi^{\st}=\phi$, $\phi_1^{\st}=-\phi_1$, 
while in every parallel holomorphic gauge, $\MD_1=\overline{\partial}_E$ and $\MD_3=\pa_y$, i.e., $A^{(0,1)}= A_y-i\phi_1=0$. 
\end{proposition}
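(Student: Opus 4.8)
The plan is to treat this as the natural extension, to the three--operator data $\Theta=(\MD_1,\MD_2,\MD_3)$, of the classical statement that a holomorphic bundle equipped with a Hermitian metric carries a unique compatible (Chern) connection, cf.\ \cite{atiyah1978geometry}. Existence is essentially built into \eqref{relationship}: once $H$ is fixed, the three defining bullet points determine the Hermitian adjoints $\MD_1',\MD_2',\MD_3'$ uniquely, and I would simply take $A$, $\phi$, $\phi_1$ to be the connection, Higgs field and $0$-form assembled there. The only point to check is that this assignment is consistent, i.e.\ that $\MD_A$ is genuinely a unitary connection. This follows because its $(1,0)$ and $(0,1)$ parts, $\MD_1$ and $\MD_1'$, are $H$-adjoint to one another by construction, while in the $y$-direction the self-adjoint combination $\MD_y=\frac12(\MD_3+\MD_3')$ is retained in $\MD_A$ and the skew part $\frac{i}{2}(\MD_3-\MD_3')$ is peeled off into $\phi_1$.

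For the unitary-gauge assertion I would pass to a frame in which $H$ is the identity. There the $H$-adjoint coincides with the ordinary Hermitian conjugate, so the symmetric/antisymmetric splitting used in \eqref{relationship} immediately yields $A^\st=-A$, $\phi^\st=\phi$ and $\phi_1^\st=-\phi_1$. This is a purely local algebraic computation with no analytic content.

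The heart of the matter is the parallel holomorphic gauge description, and I would construct such a frame in two stages. Fixing a slice $\Si\ti\{y_0\}$, the integrability $\MD_1^2=0$ together with the Newlander--Nirenberg theorem (already invoked in the text) produces a local holomorphic frame $e$ for $E_{y_0}$, so that $\MD_1 e=0$. I then extend $e$ to nearby values of $y$ by $\MD_3$-parallel transport, i.e.\ by solving $\MD_3 e=0$ along the $y$-lines, which is a linear ODE and hence uniquely solvable. The crucial point is that the extended frame stays holomorphic: since $[\MD_1,\MD_3]=0$ from \eqref{algebraEBE}, the section $\MD_1 e$ satisfies $\MD_3(\MD_1 e)=\MD_1(\MD_3 e)=0$ and vanishes at $y_0$, so $\MD_1 e\equiv 0$ for all $y$ by uniqueness of parallel transport. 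In this frame one has $\MD_1=\pab_E$ and $\MD_3=\pa_y$ by construction, which is exactly the statement that $A^{(0,1)}=0$ and $A_y-i\phi_1=0$.

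Finally, uniqueness is forced by the same determinations that gave existence: the holomorphic structure $\MD_1$ fixes $A^{(0,1)}$, unitarity then fixes $A^{(1,0)}$ as its $H$-adjoint, the data $\MD_3$ together with $H$ fix $\MD_3'$ and hence both $\MD_y$ and $\phi_1$, and $\phi$ is determined by $\MD_2=\mathrm{ad}\,\vp$ together with its adjoint. Any triplet compatible with both structures must therefore coincide with the one constructed. I expect the main obstacle to be the second stage of the parallel holomorphic gauge construction, namely verifying that $\MD_3$-parallel transport preserves holomorphicity, since this is precisely where the commutation relations in $\Theta$ enter and where one must check that the two frame conditions $\MD_1 e=0$ and $\MD_3 e=0$ can be imposed simultaneously rather than merely separately.
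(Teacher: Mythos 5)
Your proof is correct, but it is organized differently from the paper's. The paper's proof is purely computational: working in a holomorphic parallel gauge (whose existence it takes for granted, having defined such gauges just before the proposition), it uses the defining equations of the adjoints $\MD_i'$ to derive $A^{(1,0)}=H^{-1}\pa H$ and $A_y+i\phi_1=-H^{-1}\pa_y H$, then writes $H=g^{\da}g$ and conjugates by $g$ to exhibit the fields in unitary gauge explicitly, verifying $\hA_{\bar z}^{\da}=-\hA_z$ and the corresponding transformation rules for $\vp$ and $(A_y,\phi_1)$. Your argument reaches the same conclusions structurally: checking unitarity in a frame where $H=\mathrm{Id}$ is equivalent to the paper's $g$-conjugation (since $g$ carries a holomorphic frame to an orthonormal one), and your symmetric/antisymmetric splitting of $\MD_3,\MD_3'$ is the same algebra in disguise. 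The genuine addition in your version is the two-stage construction of the parallel holomorphic gauge itself — Newlander–Nirenberg on a slice followed by $\MD_3$-parallel transport, with the observation that $[\MD_1,\MD_3]=0$ plus ODE uniqueness propagates $\MD_1 e=0$ in $y$ — which supplies the existence statement the paper leaves implicit and which is exactly where the commutation relations in $\Theta$ enter; without it, the second half of the proposition has no frames to quantify over. What your route does not produce, and the paper's does, are the explicit holomorphic-to-unitary transformation formulas ($A^{(1,0)}=H^{-1}\pa H$, $\phi_z=g\vp g^{-1}$, $\phi_1=\frac{i}{2}((g^{\da})^{-1}\pa_y g^{\da}+\pa_y g^{\da}(g^{\da})^{-1})$, etc.), which are not incidental: they are reused later in \eqref{99}, in the scalar reduction \eqref{ezHBE}, and in the knot-model computations of Section 3. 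So if your proof were to replace the paper's, those formulas would still need to be derived somewhere.
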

\begin{proof}
With the convention $H(s,s')=\bar{s}^{\top}Hs'$, we compute first in a holomorphic parallel gauge, from the defining equations for the 
$\MD_i'$, that $\bpa H=(\overline{A^{(1,0)}})^{\top}H$ and $\pa_y H=H(-A_y-i\phi_1)$, so in this gauge, $A=A^{(1,0)}=H^{-1}\pa H$ 
and $A_y+i\phi_1=-H^{-1}\pa_yH$.

Suppose next that we know $H$ with respect to a homolomorphic frame. If $g$ is a complex gauge transformation such that $H=g^{\da}g$, 
then in the parallel holomorphic gauge, 
\begin{equation}
A^{(1,0)}=H^{-1}\pa H =g^{-1}(g^{\da})^{-1}(\pa_z g^{\da}) g+g^{-1} \pa_z g, \ \ A^{(0,1)}=0.
\end{equation}
If $\hA$ is the connection form in unitary gauge, then 
\begin{equation}
\hA_z=(g^{\da})^{-1}\pa_zg^{\da}, \ \ \hA_{\bar{z}}=-(\pa_{\bar{z}}g) g^{-1},
\end{equation}
and $\hA_{\bar{z}}^{\da}=-\hA_z$. Thus $g$ transforms the holomorphic form to the unitary one. 

Similarily, the same Higgs field in holomorphic and unitary gauge, $\vp$ and $\phi$, are related by
\begin{equation}
\begin{split}
\phi_z=g\vp g^{-1},\;\phi_{\bz}= (g^{\da})^{-1}\bar{\vp}^{\top} g^{\da}.
\end{split}
\end{equation}

For the final component, suppose that $\MA_y$ is given in holomorphic gauge. Then in unitary gauge, 
\begin{equation}
\begin{split}
A_y=\frac{1}{2}((\pa_yg) g^{-1}-(g^{\da})^{-1}\pa_yg^{\da}),\ \ \phi_1=\frac{i}{2}( (g^{\da})^{-1}\pa_yg^{\da}+\pa_yg^{\da}
(g^{\da})^{-1}).
\end{split}
\end{equation}
\end{proof}

\medskip

We now record some computations in a local holomorphic coordinate chart. 
Writing $\MD_1=\pa_{\bz}+\al$, $\MD_1^{'}=\pa_{z}+A^{(1,0)}$, $\MD_3=\pa_y+\MA_y$ and $\MD_3^{'}=\pa_y+\MA_y^{'}$, we compute:
\begin{equation}
\begin{split}
A^{(1,0)}&=H^{-1}\partial_z H-H^{-1}(\bar{\al})^{\top}H,\\
A&=A^{(1,0)}+\alpha=H^{-1}\partial_z H-H^{-1}\bar{\al}^{\top}H+\al,\\
\vp^{\da}&=H^{-1}\bar{\vp}^{\top}H,\\
\MA_y^{'}&=H^{-1}\partial_y H-H^{-1}\bar{\MA_y}^{\top} H.
\end{split}
\label{99}
\end{equation}
Thus if $\alpha=0$, then $\MA_y=0$, and the adjoint operators become
\begin{equation}
\begin{split}
\MD_1^{\da}=-\MD_1^{'}=-(\pa_z+H^{-1}\pa_z H),\;
\MD_2^{\da}=-\MD_2^{'}=[\vp^{\da},\;],\;
\MD_3^{\da}=-\MD_3^{'}=-\pa_y-H^{-1}\partial_y H.
\end{split}
\end{equation}

Altogether, in a local holomorphic coordinate $z$ for which the metric on $\Si$ equals $g_0^2 |dz|^2$, and in 
the holomorphic parallel gauge where $\MD_1=\bar{\pa}$, $\MD_3=\pa_y$, the \EBE \eqref{algebraEBE} become
\begin{equation}
-\bar{\pa}(H^{-1}\pa H)-g_0^2\pa_y(H^{-1}\pa_y H)+[\vp,\vp^{\star}]=0.
\label{ezHBE}
\end{equation}

Two sets of data $(E,\Theta)$ and $(E, \wt \Theta)$ are called equivalent if there exists a complex gauge transform $g$ such that 
$g^{-1}\wt \MD_i g=\MD_i$, $i=1,2,3$. A key fact is that $(E,\Theta)$ is completely determined by a Higgs pair $(\ME,\vp)$ over the 
Riemann surface $\Si$. 
\begin{proposition}   (1) Suppose that $(E,\Theta)$ and $(E, \wt \Theta)$ are two data sets. 
If the restrictions of $\Theta$ to $E_y$ and $\wt \Theta$ to some possibly different $E_{y'}$ are complex gauge equivalent, 
then $(E,\Theta)$ and $(E, \wt \Theta)$ are equivalent. 

(2) If $(E,\Theta,H)$ is a solution to the \EBE, and if $g$ is a complex gauge transform, then $(E,\Theta^g)$, where $\Theta^g=
(g^{-1}D_1g,g^{-1}D_2g,g^{-1}D_3g), H^g =Hg^{\st_H} g)$ is also a solution.
\end{proposition}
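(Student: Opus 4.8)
The plan is to prove (1) by bringing each data set to a normal form that is independent of $y$, after which the equivalence is visible on a single slice, and to prove (2) by checking that the Hermitian adjoints transform by conjugation under $g$.

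For (1), I would first pass to a parallel holomorphic gauge for $(E,\Theta)$. Writing $\MD_3=\pa_y+\MA_y$, I solve the linear ordinary differential equation $\MD_3 h=\pa_y h+\MA_y h=0$ in the variable $y$, treating $z\in\Si$ as a parameter and imposing $h|_{y=y_0}=\mathrm{Id}$ on a fixed slice. Standard ODE theory, together with smooth dependence on parameters, produces a smooth complex gauge transformation $h$ on $\Si\times\RP$ with $h^{-1}\MD_3 h=\pa_y$. In this gauge, writing $\MD_1=\pa_{\bz}+\al$ and $\MD_2=[\vp,\cdot]$, the integrability relations $[\MD_1,\MD_3]=0$ and $[\MD_2,\MD_3]=0$ read $\pa_y\al=0$ and $\pa_y\vp=0$. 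Hence $\al$ and $\vp$ are independent of $y$, so in the parallel gauge $\Theta$ is simply the pullback to $\Si\times\RP$ of a single Higgs pair $(\ME,\vp)$ on $\Si$.

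In particular the restriction of $\Theta$ to every slice $E_y$ is complex gauge equivalent, via $h|_y$, to the fixed pair $(\ME,\vp)$, independently of $y$; the same holds for $(E,\wt\Theta)$ with a gauge $\wt h$ and a pair $(\wt\ME,\wt\vp)$. The hypothesis that the restrictions to $E_y$ and to $\wt E_{y'}$ are complex gauge equivalent therefore forces $(\ME,\vp)$ and $(\wt\ME,\wt\vp)$ to be complex gauge equivalent over $\Si$, by some $g_0\in\GP^{\CC}$. Pulling $g_0$ back as a $y$-independent gauge transformation preserves the normalization $\MD_3^{\mathrm{par}}=\pa_y$ because $\pa_y g_0=0$. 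Composing the three transformations, $g:=\wt h\,g_0\,h^{-1}$ satisfies $g^{-1}\wt\MD_i g=\MD_i$ for $i=1,2,3$, which is precisely the equivalence of $(E,\Theta)$ and $(E,\wt\Theta)$.

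For (2), the relations $[\MD_i,\MD_j]=0$ are preserved under $\MD_i\mapsto g^{-1}\MD_i g$, this being the complex gauge invariance noted above, so $\Theta^g$ is again a data set. For the moment map I use $H^g=Hg^{\st_H}g=g^{\da}Hg$, which is the pullback metric $H^g(s,s')=H(gs,gs')$. I would then verify directly from the defining relations of the adjoints that the $H^g$-adjoint of $\MD_i^g$ equals $g^{-1}\MD_i' g$: substituting into $H^g(\MD_i^g s,s')+H^g(s,g^{-1}\MD_i' g s')$ and using $H^g(\cdot,\cdot)=H(g\cdot,g\cdot)$ reproduces the identity defining $\MD_i'$ as the $H$-adjoint of $\MD_i$. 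Consequently the Hermitian adjoints, and hence the $\MD_i^{\da}$, transform by conjugation, so $\sum_i[\MD_i^g,(\MD_i^g)^{\da}]=g^{-1}\big(\sum_i[\MD_i,\MD_i^{\da}]\big)g$. This vanishes because $(E,\Theta,H)$ solves \eqref{algebraEBE}, and therefore $(E,\Theta^g,H^g)$ is again a solution.

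The main obstacle is the construction in (1) of the parallel holomorphic gauge globally on $\Si\times\RP$: although $\MD_3 h=0$ is an ODE solved slicewise in $y$, one must confirm smooth dependence on the transverse variables and global solvability on $\RP$, and check that the composite $g$ lands in the intended gauge group (for instance has determinant one when working with $\slc$). Once the normal form is in place, the vanishing of $\pa_y\al,\pa_y\vp$ and the conjugation behavior of the adjoints in (2) are formal.
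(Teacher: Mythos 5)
Your proof is correct and follows essentially the same route as the paper: part (1) is the observation that $\MD_3$-parallel transport (your gauge $h$ solving $\MD_3 h=0$) identifies the Higgs pairs on all slices, and part (2) is the direct check that the $H^g$-adjoints of the $g^{-1}\MD_i g$ are $g^{-1}\MD_i' g$, so the moment map equation is conjugated and hence preserved. The paper states both steps in one line; your write-up simply supplies the details it leaves implicit.
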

\begin{proof}
Since $\MD_3$ and $\wt \MD_3$ both define isomorphisms of the Higgs pairs,  (1) follows immediately. Then, recalling that
$D_i^{\da}$ is the conjugate of $D_i$ with respect to $H$, one may check (2) directly from the definition.
\end{proof}
\end{subsection}
\end{section}

\begin{section}{Boundary Conditions}
In this section we introduce boundary conditions for the extended Bogomolny equations over $\Sigma\ti \RP$  
at $y = 0$ and as $y\to +\infty$.

\begin{subsection}{$SL(2,\mathbb{R})$ Higgs-bundles}
We impose an asymptotic boundary condition as $y \to +\infty$ by requiring that solutions of \eqref{EBE} converge to 
flat $\SLR$ connections. To explain this more carefully, we recall some basic facts about the moduli space of stable 
$\SLR$ Higgs-bundles, cf.\ \cite{hitchin1987self}, \cite{hitchin1992lie}.  

Consider a Riemann surface $\Sigma$ of genus $g>1$. A Higgs bundle consists of a pair $(\ME,\vp)$ where $\ME$ is a 
holomorphic structure on a complex vector bundle $E$ and $\vp  \in H^{0}(\mathrm{End}(\ME)\otimes K)$ is a Higgs field. 
Let $(\ME,\vp)$ be a rank $2$ Higgs bundle such that $\deg E = 0$. It is proved in \cite{hitchin1987self} that once an 
$\SLR$ structure is fixed, there is an isomorphism $\ME\cong L^{-1}\oplus L$, where $L$ is a line bundle with 
$0 \leq \deg L \leq  g-1$, in terms of which the  Higgs field takes the form 
\begin{equation}
\vp=\begin{pmatrix}  0 & \alpha \\  \beta & 0 \end{pmatrix} 
\end{equation}
where $\al\in H^0(L^{-2}\otimes K)$ and $\be\in H^{0}(L^2\otimes K)$. 
When $\deg L=g-1$ and $L=K^{\frac{1}{2}}$ for one of the $2^{2g}$ square roots of $K$, then we write this canonical form for 
the Higgs field in the familiar form
\begin{equation}
\vp= \begin{pmatrix} 
0 & 1 \\
q & 0
\end{pmatrix}
\end{equation}
Here $1$ is the canonical identity element in $\Hom(L,L^{-1})\otimes K=\Hom(K^{\frac12},K^{-\frac 12})\otimes K=\MO$ and 
$q\in H^{0}(L^{2}\oti K)=H^{0}(K^2)$ is a holomorphic quadratic differential. This set of Higgs bundles constitutes the 
Hitchin component of the $\SLR$ moduli space. 

The splittings with $|\deg L|<g-1$ constitute the non-Hitchin components. Write $k=\deg L$ so that $\deg (L^{-2}\otimes K)=
\deg K-2\deg L=2g-2-2k$. Thus when $0 \leq k<g-1$, the section $\al$ has $2g-2-2k$ zeros; these are of course invariant 
under complex gauge transform.

If $\phi_1 = 0$ in \eqref{EBE}, or if $D_3 = 0$ in \eqref{algebraEBE}, we obtain the Hitchin equation
\begin{equation}
\begin{split}
F_H+[\vp,\vp^{\star}]=0,\;\bar{\pa}_A\vp=0.
\label{Hitchinequation}
\end{split}
\end{equation}

A rank $2$ Higgs pair $(\ME,\vp)$ with $\det(\ME)=\MO$ is stable if for every $\vp$-invariant subbundle $S \subset E$, 
$\deg S<0$. We say in general that $(\ME,\vp)$ is polystable if it is direct sum of stable Higgs bundle. In the rank $2$ case, 
a polystable Higgs bundle takes the form 
$(E=L^{-1}\oplus L,\vp=\begin{pmatrix}  a & 0 \\  0 & -a \end{pmatrix})$, but by  assumption we shall exclude these.

The solvability of the Hitchin equation \eqref{Hitchinequation} was analyzed completely in \cite{hitchin1987self}. 
\begin{theorem}{\cite{hitchin1987self}}
Let $(\ME,\vp)$ be a Higgs pair over $\Sigma$. There exists an irreducible solution $H$ to the Hitchin equations if and only if 
the Higgs pair is stable, and a reducible solution if and only if it is polystable.
\end{theorem}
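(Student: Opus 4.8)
The plan is to establish the two equivalences by separating the elementary implication ``existence of a solution $\Rightarrow$ (poly)stability'' from the analytically substantial converse.

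For the necessity direction, suppose $H$ solves \eqref{Hitchinequation} and let $S\subset E$ be any $\vp$-invariant holomorphic subbundle, with $H$-orthogonal projection $\pi_S$ and second fundamental form $\be$. I would invoke the Chern-Weil formula for a holomorphic subbundle,
\[
\deg S=\frac{i}{2\pi}\int_\Si \Tr\big(\pi_S\,\Lambda F_H\big)-\frac{1}{2\pi}\int_\Si|\be|^2,
\]
and substitute $\Lambda F_H=-\Lambda[\vp,\vp^\st]$. Writing $\vp$ in block form with respect to $S\oplus S^\perp$, the $\vp$-invariance of $S$ kills the lower-left block, and a pointwise computation shows that $\Tr(\pi_S\,\Lambda[\vp,\vp^\st])$ equals $|\gamma|^2$, the squared norm of the off-diagonal Higgs component, up to a term of zero integral. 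Hence $\deg S=-\tfrac{1}{2\pi}\int_\Si(|\gamma|^2+|\be|^2)\le 0$, with equality only if both $\gamma\equiv 0$ and $\be\equiv 0$, i.e. $E=S\oplus S^\perp$ splits $H$-orthogonally and $\vp$-invariantly. Thus an irreducible solution forces strict inequality (stability), while a reducible one yields exactly such a splitting (polystability).

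For the hard direction I would fix the holomorphic data $(\bpa_A,\vp)$ and look for the metric. Writing $H=H_0e^{s}$ with $s$ a trace-free $H_0$-self-adjoint endomorphism, \eqref{Hitchinequation} becomes a second-order elliptic equation for $s$, which is precisely the vanishing of the moment map for the unitary gauge group. The polystable case is easy: the splitting $E=L^{-1}\oplus L$ with diagonal $\vp$ decouples the system into two abelian Hitchin equations, each solved directly by prescribing the curvature of a line bundle via Hodge theory, producing a reducible solution. So the real content is the stable case, which I would attack by the Donaldson heat flow $H^{-1}\pa_t H=-(\Lambda F_H+\Lambda[\vp,\vp^\st])$. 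Parabolicity gives short-time existence; monotonicity of the Donaldson functional together with standard parabolic Schauder estimates upgrades this to long-time existence and subsequential convergence, provided one controls $\sup_\Si|s(t)|$. The entire problem thereby reduces to an a priori $C^0$ bound on $s$ along the flow.

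The main obstacle is exactly this $C^0$ estimate, and it is the unique place where stability is used. I would argue by contradiction in the style of Uhlenbeck-Yau and Simpson: if $\sup_\Si|s(t)|\to\infty$, then a suitable rescaling of $s$ converges weakly to a nonzero self-adjoint limit whose spectral projections define a weakly holomorphic, $\vp$-invariant subsheaf; Uhlenbeck-Yau's regularity theorem identifies its saturation with an honest $\vp$-invariant subbundle $S$, and the limiting energy identity forces $\deg S\ge 0$, contradicting stability. The delicate points are the weak compactness of the rescaled endomorphisms and the passage from a weakly holomorphic subsheaf to a genuine destabilizing subbundle. In the rank-two case at hand these simplify substantially, since any proper $\vp$-invariant subsheaf is a line subbundle, so the blow-up analysis need only detect a single destabilizing line bundle.
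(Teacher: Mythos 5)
The paper does not actually prove this statement: it is quoted as background, with proof deferred entirely to Hitchin's paper \cite{hitchin1987self}, so there is no internal argument to compare yours against. Judged on its own, your sketch is a correct outline of a known proof, but it is worth recording that it is not the route of the cited paper. Your necessity direction (Chern--Weil for a $\vp$-invariant subbundle $S$, substitution of $F_H=-[\vp,\vp^{\st}]$, block decomposition) is sound; in fact the trace identity is cleaner than you state, since $\Tr(AA^{\da})=\Tr(A^{\da}A)$ makes the diagonal Higgs blocks cancel pointwise, so $\Tr\bigl(\pi_S\,\Lambda[\vp,\vp^{\st}]\bigr)$ is exactly the norm squared of the off-diagonal component and no ``zero-integral'' correction is needed; you should also note that this same computation shows the solution produced for a stable pair is automatically irreducible, which is needed for the ``if and only if.'' For the converse you follow the Donaldson--Simpson line: heat flow, monotone Donaldson functional, and an a priori $C^0$ bound obtained by contradiction through rescaling and the Uhlenbeck--Yau weakly holomorphic subsheaf theorem. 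Hitchin's own argument in \cite{hitchin1987self} is different: he minimizes the Yang--Mills--Higgs functional over the complex gauge orbit, extracts a minimizer by Uhlenbeck's weak compactness theorem, and shows that failure of the minimizer to lie in the orbit produces a destabilizing $\vp$-invariant line subbundle; Donaldson's companion paper gives yet another proof via twisted harmonic maps into $SL(2;\CC)/SU(2)$. The trade-off is the expected one: your heat-flow route generalizes to arbitrary rank and, following Simpson, to higher-dimensional K\"ahler base manifolds, at the price of the delicate weak-compactness and subsheaf-regularity steps, whereas Hitchin's rank-two argument is more self-contained precisely for the reason you note at the end --- any destabilizing object is automatically a line subbundle, so no Uhlenbeck--Yau regularity theory is required.
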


When $\deg L>0$, the Higgs pairs $(L^{-1}\oplus L,\vp=\begin{pmatrix}
0 & \al \\
\be & 0
\end{pmatrix})$ are all stable. If $\deg L=0$, then $L\cong \MO$ and $E$ is holomorphically trivial. 
If $\vp=\begin{pmatrix} 
0 & \alpha \\
\beta & 0
\end{pmatrix}$, 
then the pair is stable if and only if neither $\al$ nor $\be$ are identically zero. If precisely one of $\al$, $\be$ 
vanishes, the pair is neither stable nor polystable and the Hitchin equation has no solution. If both $\al=\be=0$, 
then the Higgs bundle is polystable and there exist a reducible solution. 

In this paper we restrict attention to irreducible solutions. The moduli space of stable $\SLR$-Higgs pairs can then 
be described as follows:
\begin{theorem}{\cite{hitchin1987self}}
The $\SLR$ Higgs bundle moduli space contains $2g-1$ components, classified by the degree $k$ of the line bundle $L$, $|k|\leq g-1$. 
The component $\MM_k^{\SLR}$ is a smooth manifold of dimension $(6g-6)$ diffeomorphic to a complex vector bundle of 
rank $(g-1+2k)$ over the $2^{2g}$-fold cover of the symmetric product $S^{2g-2-2k}\Sigma$.
\end{theorem}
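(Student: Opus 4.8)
The plan is to exploit the normal form already recorded above: once an $\SLR$ structure is fixed, every such Higgs bundle is $(\ME \cong L^{-1}\oplus L,\ \vp = \begin{pmatrix} 0 & \al \\ \be & 0\end{pmatrix})$ with $\al \in H^0(L^{-2}\oti K)$, $\be \in H^0(L^2 \oti K)$ and $\det \ME = \MO$. First I would isolate the discrete invariant $k = \deg L$ and show that it labels the components. Stability forces a Milnor--Wood-type bound: since a nonzero $\al$ is a section of a bundle of degree $2g-2-2k$ and a nonzero $\be$ a section of one of degree $2g-2+2k$, at least one of these degrees must be nonnegative, and the case analysis of $\vp$-invariant subbundles from the discussion preceding the theorem shows $|k| \le g-1$. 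This produces exactly the $2g-1$ values $k = -(g-1),\dots,g-1$. As $k$ is locally constant and each stratum is connected (by the parametrization below), these are precisely the $2g-1$ connected components.

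Next, using the involution $L \leftrightarrow L^{-1}$, $\al \leftrightarrow \be$, $k \leftrightarrow -k$, it suffices to describe $\MM_k^{\SLR}$ for $k \ge 0$. For such $k$, stability is equivalent to $\al \not\equiv 0$, with the extra requirement $\be \not\equiv 0$ when $k=0$. I would then build the parametrizing map: to a stable pair assign the zero divisor $D = (\al)$, an effective divisor of degree $2g-2-2k$, hence a point of $S^{2g-2-2k}\Si$. The constraint $L^{-2}\oti K \cong \MO(D)$ is equivalent to $L^2 \cong K(-D)$, so $L$ is one of the $2^{2g}$ square roots of $K(-D)$; this is exactly the $2^{2g}$-fold cover of the symmetric product. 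The residual diagonal gauge group $\CC^{\st}$ acts on $(\al,\be)$ with opposite weights, and I would use it to normalize the scale of $\al$ (which is fixed by $D$ up to a constant), leaving $\be \in H^0(L^2 \oti K)$ as a free, unconstrained parameter spanning the fiber.

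The fiber dimension follows from Riemann--Roch: $\deg(L^2 \oti K) = 2g-2+2k$, so for $k>0$ this exceeds $2g-2$, forcing $h^1 = 0$ and $h^0 = g-1+2k$; the case $k = g-1$ is cleanest, where $\al$ is nowhere vanishing, $S^0\Si$ is a point, and the fiber $H^0(K^2)$ has dimension $3g-3$. Constancy of $h^0$ over the base yields, via the direct image over the universal divisor family, a genuine holomorphic vector bundle of rank $g-1+2k$; the total real dimension is then $2\big((g-1+2k)+(2g-2-2k)\big)=6g-6$. Smoothness of each $\MM_k^{\SLR}$ is a consequence of the deformation theory at stable points: the hypercohomology of the deformation complex has vanishing $H^0$ and $H^2$, so the tangent spaces have constant dimension $6g-6$.

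I expect the main obstacle to be twofold. First, verifying rigorously that $k$ separates all components, i.e.\ that no unexpected $\vp$-invariant subbundle destabilizes and that each stratum is connected, so the discrete label is a complete topological invariant. Second, the boundary value $k=0$ is genuinely delicate: there $\deg(L^2 \oti K) = 2g-2$ sits exactly at the Riemann--Roch threshold where $h^1$ can jump, and stability additionally demands $\be \not\equiv 0$, so the naive fiber is an open subset of a space of non-constant dimension rather than a vector space. Reconciling this with the clean rank $g-1$ description, and checking throughout that the $\CC^{\st}$-normalization of $\al$ assembles into an honest global vector bundle rather than a mere fiberwise quotient, is where the real work lies.
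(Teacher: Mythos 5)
Your proposal is correct and follows essentially the same route as the paper's own four-line sketch: the zero divisor $D$ of $\al$ gives the point of $S^{2g-2-2k}\Sigma$, the $2^{2g}$ square roots of $L^2=\MO(-D)\oti K$ give the cover, and $\be\in H^0(\MO(-D)\oti K^2)$ is the fiber coordinate, with your Riemann--Roch count of the rank, the $\CC^{\st}$-normalization of $\al$, and the smoothness discussion supplying details the paper omits entirely. The one slip is your claim that every stratum is connected, which fails for $k=\pm(g-1)$ where the base is $2^{2g}$ points (so $\MM_{\pm(g-1)}^{\SLR}$ is disconnected), and your flagged worry about $k=0$ (jumping $h^1$ and stability deleting $\be=0$) is real --- but both are imprecisions already built into the theorem's own wording, which the paper's sketch likewise ignores.
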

\proof We sketch the proof. For the $SL(2,\mathbb{R})$ Higgs bundle $(L^{-1}\oplus L,\begin{pmatrix}
0 & \al \\
\be & 0
\end{pmatrix})$, the zeroes of $\al\in H^{0}(L^{-2}\otimes K)$ give a divisor $D$ where $\MO(D)=L^{-2}\otimes K$, 
and hence an element of $S^{2g-2-2k}\Sigma$. Then $\be\in H^{0}(\Sigma,\MO(-D)K^2)$ determines a line bundle. 

Note that since we are working with $SL(2,\mathbb{R})$, given $D$ we can only determine $L^2=\MO(-D)K$, but $L$ itself 
can only be recovered up to the choice of a line bundle $I$ with $I^2=\MO$. There are precisely $2^{2g}$ such choices.
\qed

We recall finally a well-known result:
\begin{proposition}
The harmonic metric $H$ corresponding to a stable $\SLR$ Higgs pair splits with respect to the decomposition $E = L^{-1} \oplus L$,
$H= \begin{pmatrix} h & 0 \\ 0 & h^{-1} \end{pmatrix}$.
\end{proposition}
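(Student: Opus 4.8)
The plan is to exploit the extra symmetry carried by an $\SLR$ Higgs pair together with the uniqueness of the harmonic metric. Write $E = L^{-1}\oplus L$ and let $\sigma$ denote the holomorphic bundle automorphism acting by $-1$ on $L^{-1}$ and by $+1$ on $L$, i.e.\ $\sigma = \begin{pmatrix} -1 & 0 \\ 0 & 1 \end{pmatrix}$ in this splitting. Since $\vp = \begin{pmatrix} 0 & \al \\ \be & 0 \end{pmatrix}$ is off-diagonal, a direct computation gives $\sigma\vp\sigma^{-1} = -\vp$, so $\sigma$ is an isomorphism of Higgs pairs $(\ME,\vp)\to(\ME,-\vp)$. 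The idea is that $\sigma$-invariance of $H$ will force $H$ to be diagonal, and the $\SLR$ normalization will then pin down the two diagonal entries as reciprocals.

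First I would observe that the Hitchin equation \eqref{Hitchinequation} is insensitive to the sign of $\vp$: for any Hermitian metric $H$ one has $(-\vp)^{\st}=-\vp^{\st}$, hence $[-\vp,(-\vp)^{\st}]=[\vp,\vp^{\st}]$, so $H$ solves the Hitchin equation for $(\ME,\vp)$ if and only if it solves it for $(\ME,-\vp)$. In particular the harmonic metric of $(\ME,-\vp)$ coincides with that of $(\ME,\vp)$. On the other hand, the harmonic metric is natural under holomorphic isomorphisms of Higgs pairs, so transporting $H$ by $\sigma$ yields a harmonic metric for $(\ME,-\vp)$; concretely, $\sigma^{*}H$ is harmonic for $(\ME,-\vp)$. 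Combining these two facts with Hitchin's uniqueness theorem for stable pairs (uniqueness up to a positive scalar) gives $\sigma^{*}H = cH$ for some $c>0$; applying the pullback a second time and using $\sigma^2=\mathrm{id}$ forces $c^2=1$, so $c=1$ and $\sigma^{*}H=H$.

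Next I would read off what $\sigma$-invariance means. Writing $H = \begin{pmatrix} a & b \\ \bar b & c \end{pmatrix}$ in the splitting $L^{-1}\oplus L$ and using the convention $H(s,s')=\bar s^{\top}Hs'$, the pulled-back metric has matrix $\sigma^{*}H = \sigma^{\da}H\sigma = \begin{pmatrix} a & -b \\ -\bar b & c \end{pmatrix}$, so the identity $\sigma^{*}H=H$ forces $b=0$; that is, $H$ is diagonal with respect to $L^{-1}\oplus L$. Finally, the $\SLR$ (equivalently $SL(2,\mathbb{C})$) structure fixes a trivialization $\det\ME=\MO$, and since $\vp$ is trace-free the induced Higgs structure on $\det\ME$ is the trivial line bundle with zero Higgs field, whose harmonic metric is flat; thus $\det H$ is constant and we normalize $\det H=1$. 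Together with diagonality this gives $ac=1$, and setting $h=a$ we obtain $H=\begin{pmatrix} h & 0 \\ 0 & h^{-1} \end{pmatrix}$, as claimed.

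The two computations involved (the conjugation $\sigma\vp\sigma^{-1}=-\vp$ and the block form of $\sigma^{*}H$) are immediate, so the real content lies entirely in the uniqueness input: the argument rests on Hitchin's theorem guaranteeing that a stable pair admits a harmonic metric unique up to scale. The main point to handle carefully is therefore to confirm that stability is preserved under $\vp\mapsto-\vp$ (it is, since $\sigma$ is a holomorphic isomorphism and stability depends only on the holomorphic data) so that uniqueness genuinely applies, and to state explicitly the determinant normalization $\det H=1$ as part of the definition of an $\SLR$ harmonic metric. As an alternative that avoids the naturality bookkeeping, one could instead substitute the diagonal ansatz $H=\mathrm{diag}(h,h^{-1})$ directly into \eqref{ezHBE}, check that it reduces the equation to a single scalar PDE for $\log h$, and then invoke uniqueness to conclude that this diagonal solution must be the harmonic metric.
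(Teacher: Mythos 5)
Your proof is correct and takes essentially the same route as the proof the paper relies on: the paper gives no argument of its own, deferring to \cite[Theorem 2.10]{collier2014asymptotics}, and that proof is precisely this symmetry-plus-uniqueness argument (the holomorphic involution $\sigma$ negates the off-diagonal Higgs field, the Hitchin equation is insensitive to that sign, and uniqueness of the harmonic metric for a stable pair forces $\sigma$-invariance, hence diagonality). Your handling of the determinant normalization $\det H = 1$ is also the standard and correct way to pin down the $h$, $h^{-1}$ form.
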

A proof appears in \cite[Theorem 2.10]{collier2014asymptotics}. 
\end{subsection}

\begin{subsection}{The Nahm Pole Boundary Condition and Holomorphic Data} 
\label{subSectionNahmPole}
We next recall the Nahm pole boundary condition and its associated Hermitian geometry, following \cite{gaiotto2012knot}.

The starting point is the model solution \cite{witten2011fivebranes}. Consider a trivial rank $2$ bundle $E$ over 
$\mathbb{C}\times\RP$. The model Nahm pole solution is
\begin{equation}
A_z=0,\ 
\phi_z=\frac{1}{y} \begin{pmatrix} 0 & 1 \\ 0 & 0 \end{pmatrix}, \ 
\MA_y=-i\phi_1=\frac{1}{2y}
\begin{pmatrix} 1 & 0 \\  0 & -1 \end{pmatrix}. 
\end{equation}
Under the singular complex gauge transformation, these fields become 
$g = \begin{pmatrix}
y^{-\frac{1}{2}} & 0 \\
0 & y^{\frac{1}{2}}
\end{pmatrix}$ to $\vp=\begin{pmatrix} 
0 & 1 \\
0 & 0
\end{pmatrix}$, $A_z = 0$ and $\MA_y = 0$, i.e., the connection in the $\RR^+$ direction transforms to $\del_y$. 

Now, $s=\begin{pmatrix}  ay^{-\frac{1}{2}}\\ by^{\frac{1}{2}} \end{pmatrix}$ is an $\MD_3$ parallel section of $E$ 
for any $a,b \in \RR$, and indeed is a solution of the full extended Bogomolny equations.  A generic solution 
of this form blows up as $\yrz$, but there is a well-defined subbundle $L\subset E$, called the \textbf{vanishing line 
bundle}, defined as the space of solutions which tend to $0$ as $\yrz$.  For this model solution and line bundle,
$\mathrm{span}\, \{\vp(L),L\otimes K\}=E\otimes K$ at all points.  

We say that a solution $(A,\vp,\phi_1)$ to \eqref{EBE} on a rank $2$ Hermitian bundle $E$ with determinant zero over $\Si$
satisfies the Nahm pole boundary condition if in terms of any local trivialization
\begin{equation}
A_z\sim\MO(y^{-1+\ep}),\;
\varphi=\frac{1}{y} \begin{pmatrix}
0 & 1 \\
0 & 0
\end{pmatrix}+\MO(y^{-1+\ep}),\;\MA_y=\frac{1}{2y}\begin{pmatrix}
1 & 0 \\
0 & -1
\end{pmatrix}+\MO(y^{-1+\ep})
\end{equation}
as $y \to 0$.  As described in \cite{MazzeoWitten2013}, it is of course necessary to consider fields which lie in some
function space, e.g. a weighted H\"older space, and the error estimate $\MO(y^{-1 + \ep})$ is interpreted in terms of that norm.
The regularity theory in that paper shows that a solution of the extended Bogomolny equations, or indeed of the full Kapustin-Witten 
system, is then much more regular after being put into gauge. 

In exactly the same way as in the model case, this boundary condition defines a line bundle $L \subset E$, and since $\det E = \MO$, 
we have
$E/L \cong L^{-1}$. On the other hand, $\mbox{span}\{\vp(L), L\otimes K \}=E \otimes K$, so that pushing forward $L$ via
\begin{equation}
L\xrightarrow{\vp} E\otimes K\rightarrow (E/L) \otimes K
\end{equation}
shows that $L\cong L^{-1}\otimes K$, i.e., $L\cong K^{\frac{1}{2}}$, and then $E/L\cong K^{-\frac 12}$. In other words,
\begin{equation}
0\to K^{\frac 12}\to E\to K^{-\frac 12}\to 0.
\end{equation}

In addition, denote $i_1:\vp(L)\rightarrow E\otimes K$ and $i_2:L\otimes K\rightarrow E\otimes K$, and define:
\begin{equation}
\begin{split}
i:\, &\vp(L)\oplus L\otimes K\rightarrow E\otimes K\\
i &=i_1+i_2.
\end{split}
\end{equation}
As $\mbox{span}\{\vp(L), L\otimes K \}=E \otimes K$, we obtain that $i$ is surjective between two rank two bundles thus isomorphism. 
Tensoring by $K^{-1}$, we obtain $E\cong K^{-\frac12}\oplus K^{\frac12}$.

Under a complex gauge transform, we can then put the Higgs field into the form 
$\vp=\begin{pmatrix}
t & 1 \\
\beta' & -t
\end{pmatrix}$. Setting $g=\begin{pmatrix}
1 & 0 \\
-t & 1
\end{pmatrix}$, we compute that $g^{-1}\vp g=\begin{pmatrix} 
0 & 1 \\
\beta & 0
\end{pmatrix}$. This shows that a $SL(2,\mathbb{R})$ Higgs bundle lies in the Hitchin component of the $\SLR$ 
Higgs bundle moduli space. 

In summary, recalling that $\MM^{\EBEt}_{\NP}$ is the moduli space of solutions of the extended Bogomolny equations with limit 
in $SL(2,\mathbb{R})$ and $\MM^{\Fusi}$ is the Hitchin component of stable $SL(2,\mathbb{R})$ Higgs bundle, we have now
explained the map $I_{\NP}:\MM^{\EBEt}_{\NP}\to \MM^{\Fusi}$. Gaiotto and Witten \cite{gaiotto2012knot} conjectured that 
this map is a bijection, and we show below that this is the case. 
\end{subsection}

\begin{subsection}{Knot Singularity}
\label{subsectionKnot}
We next define the model knot singularity introduced by Witten in \cite{witten2011fivebranes}, and the modified Nahm pole
condition for knots.  In the Riemann surface picture, knot singularities correspond to marked points, at which monopoles
are wrapped. 

Fix coordinates $z = x_2 + i x_3 \in \mathbb C$ and $y \in \RP$ on $\mathbb{C}\ti\RP$. 
Then, with respect to the Higgs field $\vp= \begin{pmatrix}
0 & z^n \\
0 & 0
\end{pmatrix}$ and Hermitian metric $H=\begin{pmatrix} 
e^{u} & 0 \\
0 & e^{-u}
\end{pmatrix}$, equation \eqref{ezHBE} takes the form
\begin{equation}
-(\Delta+\pa_y^2)u+r^{2n}e^{2u}=0,
\label{Wittenknotmodel}
\end{equation}
where $\Delta=\pa_{x_2}^2+\pa_{x_3}^2$ and $r=|z|$. 

Assuming homogeneity in $(z,y)$ and radial symmetry in $z$, Witten \cite{witten2011fivebranes} obtained 
the model solution 
\begin{equation}
U_n(r,y)=\log \left( \frac{2(n+1)}{(\sqrt{r^2+y^2}+y)^{n+1}-(\sqrt{r^2+y^2}-y)^{n+1}}\right).
\label{modelsoln}
\end{equation}
To investigate this further, introduce spherical coordinates $(R,\psi,\theta)$, 
\[
R=\sqrt{r^2+y^2}, \ z=re^{i\theta}, \ \sin \psi=\frac{y}{R}, \ \cos \psi=\frac{r}{R}.
\]
Writing $a=\sqrt{r^2+y^2}+y$ and $b=\sqrt{r^2+y^2}-y$, then 
\[
\frac{a}{R}=1+\frac{y}{R}=1+\sin \psi,\ \frac{b}{R}=1-\frac{y}{R}=1-\sin \psi,
\]
and hence 
\[
U_n =-\log y-n\log R+\log \frac{n+1}{S_n(\psi)},
\]
where
\[
S_n(\psi) = \mathcal S_n(a,b)=\sum_{k=0}^na^{n-k}b^k.
\]
Note that $U_0 = -\log y$ when $n=0$, which recovers the model
Nahm pole solution.  Moreover, $U_n$ is compatible with the Nahm pole singularity in the sense that $U_n \sim -\log y$
as $y \to 0$ for $r \geq \epsilon > 0$. 

Defining 
$g_n=\begin{pmatrix}    e^{u_n/2} &  0\\  0 & e^{-u_n/2} \end{pmatrix},$ then in unitary gauge
\begin{equation}
A_z=g_n^{-1}\pa g_n,\;A_{\bz}=-(\bar{\pa}g)g^{-1},\;\phi_z=g_n\vp g_n^{-1},\;\phi_1=\frac{i}{2}(g_n^{-1}\pa_y g_n+\pa_y g_n g_n^{-1}),
\end{equation}
or explicitly, 
\begin{equation}
\begin{split}
\phi_z & =
\begin{pmatrix} 
0 & z^ne^{U_n} \\
0 & 0
\end{pmatrix}   \\
& =\frac{2}{R}\, \frac{(n+1)\cos^n\psi }{(1+\sin\psi)^{n+1}-(1-\sin\psi)^{n+1}} e^{i n\theta} 
\begin{pmatrix}
0 & 1\\
0 & 0
\end{pmatrix}  \\
&   =\frac{1}{R\sin\psi}\frac{(n+1)\cos^n\psi}{S_n(\psi)}e^{in\theta}
\begin{pmatrix}
0 & 1\\
0 & 0
\end{pmatrix}  \\ 
\phi_1&=
-U_n' \begin{pmatrix}
\frac{i}{2} & 0 \\
0 & -\frac{i}{2}
\end{pmatrix}  \\
& =\frac{n+1}{R}\frac{(1+\sin \psi)^{n+1}+(1-\sin \psi)^{n+1}}{(1+\sin \psi)^{n+1}-(1-\sin \psi)^{n+1}}
\begin{pmatrix}
\frac{i}{2} & 0 \\
0 & -\frac{i}{2}
\end{pmatrix} \\ 
A_y & =0.
\end{split}
\end{equation}

Suppose that $s$ is a section with $\MD_3 s=0.$ Then for any $a,b \in \mathbb R$, $s=\begin{pmatrix}
ae^{U_n/2}\\
be^{-U_n/2}
\end{pmatrix}$ 
is a solution, where $e^{U_n}=(n+1)/(yR^nS_n(\psi))$.   As in the Nahm pole case, we can still define a line subbundle $L$ 
corresponding to parallel sections whose limits as $y \to 0$ vanish;  generic parallel sections blow up.  However, a new
feature here is that $\mbox{span}(L \otimes K, \vp(L)) \neq E \otimes K$ precisely at the knot singularities, reflecting
the zeroes of $\vp$.

For any $p \in \Sigma$ we can transport the model solution to $\Si \times \RP$ using the local coordinates $(z,y)$, 
giving an approximate solution $(A^p,\phi^p,\phi_1^p)$ in a neighborhood of $(p,0)$.  It is convenient 
\begin{definition}
A solution $(A,\phi,\phi_1)$ to the \EBE satisfies the general Nahm pole boundary condition with knot singularity of order 
$n$ at $(p,0) \in \Si \times \RP$ if in a suitable gauge it satisfies
\begin{equation}
(A,\phi,\phi_1)=(A^p,\phi^p,\phi_1^p)+\MO(R^{-1+\ep} (\sin \psi)^{-1 + \ep})
\end{equation}
for some $\ep>0$, where $R$ and $\psi$ are the spherical coordiates used above. 
\end{definition}

Corresponding to a solution with knot singularity is a set of holomorphic data. Suppose $(A,\phi,\phi_1)$ is a solution 
with a knot singularity at the points $\{p_j\}$ with orders $n_j$, $j=1,\cdots, N$.  We define the line subbundle $L$ of $E$ and
obtain the exact sequence
\begin{equation}
0\to L \to E \to L^{-1}\to 0.
\label{exactsequecneknot}
\end{equation}
Using the asymptotic boundary condition at $y\to +\infty$ and the Milnor-Wood inequality \cite{milnor1958existence}, 
\cite{wood1971bundles}, we have $|\deg L|\leq g-1$.

The knot singularity and Higgs field induce a map 
\begin{equation}
P:L\xrightarrow{\vp} E\otimes K\rightarrow L^{-1}\otimes K.
\label{inducedmapP}
\end{equation}
Regarding $P$ as an element of $H^0(L^{-2}\otimes K)$, we deduce that that there are $2g-2-2\deg L$ marked points, counted
with multiplicity.  

The data we must specify then consists of the following:
\begin{enumerate}
\item An $SL(2;\mathbb{C})$ Higgs bundle with a line subbundle $L$;
\item Marked points $\{p_j\}$ with orders $n_j$;
\item Generic parallel sections of $E$ over $\Si\setminus \{p_j\}$ blow up at the rate $y^{-\frac{1}{2}}$;
\item The section $P\in H^0(L^{-2}K)$ in \eqref{inducedmapP} has zeroes precisely at $p_j$ of order $n_j$.
\end{enumerate}

Just as for the Nahm pole case, we impose an $SL(2,\mathbb{R})$ structure on the Higgs bundle. The following
assumption simplifies the Hermitian geometric data.
\begin{definition}
\label{compatiblewithSL(2R)limit}
Suppose we have a solution to \eqref{EBE} which satisfies the general Nahm pole boundary conditions, and assume that 
the solution converges to an $SL(2,\mathbb{R})$ Higgs bundle $(L^{-1}\oplus L, \vp=\begin{pmatrix}
0 & \al \\
\be & 0
\end{pmatrix})$ as $y \to \infty$. We say that this solution is compatible with the $SL(2,\mathbb{R})$ structure at $y=\infty$ if 
either $L$ or $L^{-1}$ is the vanishing line bundle. 
\end{definition}
Merely assuming that the Higgs bundle converges to an $SL(2,\mathbb{R})$ Higgs bundle, as above, 
is not enough to imply that $L$ is the vanishing line bundle. 

\begin{remark}
\label{sl2remark}
If the exact sequence \eqref{exactsequecneknot} splits, the Higgs field may take the slightly more general form $\vp=\begin{pmatrix}
t & \al \\
\be & -t
\end{pmatrix}$. Such Higgs fields with $t \neq 0$ exist, but at present we do not know whether it is possible to solve
the \EBE with knot singularity with this data. The vanishing of $t$ will play a minor but important technical role below 
in Proposition~\ref{Knotsingularityexpansion}, which we need in proving uniqueness theorems later.
\end{remark}

The compatibility of the solution with the $SL(2,\mathbb{R})$ structure is a technical condition that allows us to reduce the Bogomolny 
equation to a scalar equation. There is one special case where we do not need to assume this compatibility condition. Under the 
assumption of Definition \ref{compatiblewithSL(2R)limit}, denote the vanishing line bundle as $L'$. We then obtain 
\begin{proposition}
\label{vanishinglinebundle}
If $L'\neq L$ or $L^{-1}$, then $\deg L'\leq-|\deg L|$,
\end{proposition}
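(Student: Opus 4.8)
The plan is to exploit the holomorphic splitting provided by the $\yri$ limit and thereby reduce the statement to an elementary degree count for maps of line bundles. Since the solution converges as $\yri$ to the $\SLR$ Higgs bundle $(L^{-1}\oplus L,\vp)$, and since the family of Higgs pairs is $\MD_3$-parallel in $y$ (so its holomorphic type is independent of $y$), I would first identify the underlying holomorphic bundle $\ME$ over $\Si$ with $L^{-1}\oplus L$. In particular both $L$ and $L^{-1}$ sit inside $\ME$ as holomorphic subbundles, so I have at my disposal the two holomorphic projections $\pi_L:\ME\to L$ and $\pi_{L^{-1}}:\ME\to L^{-1}$.

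Next I would regard the vanishing line bundle $L'$ as a saturated holomorphic line subbundle $L'\hookrightarrow \ME$, and restrict the two projections to it, obtaining maps $\pi_L|_{L'}:L'\to L$ and $\pi_{L^{-1}}|_{L'}:L'\to L^{-1}$. The key observation is that $\pi_{L^{-1}}|_{L'}$ vanishes identically precisely when $L'\subseteq L$, and since both are saturated rank-one subbundles this forces $L'=L$; likewise $\pi_L|_{L'}\equiv 0$ forces $L'=L^{-1}$. Thus under the hypothesis $L'\neq L,L^{-1}$ both restricted projections are nonzero holomorphic maps of line bundles over the compact curve $\Si$. A nonzero holomorphic section of $\Hom(L',L)=(L')^{-1}\otimes L$ can exist only if $\deg((L')^{-1}\otimes L)\geq 0$, giving $\deg L'\leq \deg L$; applying the same reasoning to $\pi_{L^{-1}}|_{L'}$ gives $\deg L'\leq \deg L^{-1}=-\deg L$. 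Combining these, $\deg L'\leq \min(\deg L,-\deg L)=-|\deg L|$, which is the assertion.

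The only genuine work, and the step I expect to be the main obstacle, lies in the first paragraph: justifying rigorously that $L'$, which is defined analytically through the $\yrz$ asymptotics of $\MD_3$-parallel sections, really does extend to a holomorphic and saturated subbundle of the single holomorphic bundle $\ME\cong L^{-1}\oplus L$ coming from the $\yri$ limit. This requires using the $\MD_3$-parallel identification of the slices $E_y$ to transport the boundary data to an interior holomorphic object, and then checking that the failure of $\mathrm{span}\{L'\otimes K,\vp(L')\}=\ME\otimes K$ at the knot points introduces at worst a subsheaf, which one removes by passing to the saturation. Once $L'$ is a bona fide subbundle the degree comparison above is immediate.
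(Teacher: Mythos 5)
Your proof is correct and is essentially the paper's own argument in dual form: where you restrict the two projections of the splitting $L^{-1}\oplus L$ to $L'$, the paper composes the inclusions $L^{\pm 1}\hookrightarrow L^{-1}\oplus L$ with the quotient map onto $E/L'\cong L'^{-1}$, and either way one obtains nonzero holomorphic sections of the same line bundles $L^{\pm 1}\otimes (L')^{-1}$, forcing $\deg L'\leq \deg L$ and $\deg L'\leq -\deg L$. The point you flag as the main obstacle --- that $L'$ is a genuine holomorphic (saturated) line subbundle of $E\cong L^{-1}\oplus L$ --- is part of the paper's standing setup from \S 3.3 (it is exactly what produces the exact sequence \eqref{exactsequecneknot}), so the paper, like you, treats the remaining degree count as immediate.
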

\begin{proof}
The line subbundle $L'$ induces the exact sequence:
\begin{equation*}
0\to L'\to L^{-1}\oplus L\to L'^{-1}\to 0,
\end{equation*}
which defines the holomorphic map $\gamma_1:L\to L'^{-1}$ and $\gamma_2:L^{-1}\to L'^{-1}$.  Since $L'\neq L$ or $L^{-1}$, 
we obtain that neither $\gamma_1$ nor $\gamma_2$ equal the identity. In other words, we obtain non-zero elements $\gamma_1\in 
H^0(L^{-1}\otimes L'^{-1})$ and $\gamma_2\in H^0(L\otimes L'^{-1})$. Since $\gamma_1$, $\gamma_2$ do not have poles, we 
obtain $\deg(L^{-1}\otimes L'^{-1})\geq 0$ and $\deg(L\otimes L'^{-1})\geq 0$, which implies $\deg L'\leq -|\deg L|.$
\end{proof}
Denoting by $N:=\sum n_j$ the sum of the orders of the marked points, we conclude the
\begin{corollary}
If $\deg L>0$ and $N< 2g-2+2\deg L$, then $L'=L$.
\end{corollary}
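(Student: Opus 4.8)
The plan is to reduce the statement to a numerical comparison of degrees, using Proposition~\ref{vanishinglinebundle} to control the case $L' \neq L, L^{-1}$. The essential input is the exact degree of the vanishing line bundle $L'$ in terms of the knot data. Since the induced section $P \in H^0(L'^{-2}\otimes K)$ of \eqref{inducedmapP} vanishes precisely at the marked points $p_j$ to order $n_j$, its zero divisor is $\sum n_j p_j$, of degree $N = \sum n_j$; hence $\deg(L'^{-2}\otimes K) = N$, i.e.
\[
\deg L' = g - 1 - \tfrac{N}{2}.
\]
This identity is the only place the knot singularity enters, and getting the bookkeeping right---remembering that the marked points constrain the vanishing bundle $L'$, not the limiting bundle $L$---is the real content of the argument.

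With this identity in hand I would argue by contradiction, assuming $L' \neq L$ and splitting into the two alternatives left open by Proposition~\ref{vanishinglinebundle}. If $L' = L^{-1}$, then $\deg L' = -\deg L$, and the identity forces $N = 2g - 2 + 2\deg L$, contradicting the hypothesis $N < 2g - 2 + 2\deg L$. If instead $L' \neq L$ and $L' \neq L^{-1}$, then Proposition~\ref{vanishinglinebundle} applies and gives $\deg L' \leq -|\deg L| = -\deg L$, where I use $\deg L > 0$; substituting into the identity yields $N = 2g - 2 - 2\deg L' \geq 2g - 2 + 2\deg L$, again contradicting the hypothesis. Since both alternatives are excluded, $L' = L$.

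The argument is arithmetic once the degree identity is established, so I do not expect a genuine analytic obstacle; the step demanding care is the first one. One must check that $P \not\equiv 0$ (which holds because its zero set is the finite divisor $\sum n_j p_j$) and that its zeroes total exactly $N$ with no hidden contribution, so that $\deg L' = g - 1 - N/2$ holds as an \emph{equality} rather than an inequality. Granting that, the hypothesis $N < 2g-2+2\deg L$ is exactly the sharp threshold separating $L' = L$ from the competing possibilities $L' = L^{-1}$ and $\deg L' \leq -\deg L$, and the corollary follows.
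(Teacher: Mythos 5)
Your proof is correct and follows essentially the same route as the paper: both rest on the degree identity $N = 2g-2-2\deg L'$ coming from the zero divisor of $P \in H^0(L'^{-2}\otimes K)$, combined with Proposition~\ref{vanishinglinebundle} to exclude the alternatives $L' = L^{-1}$ and $L' \neq L, L^{-1}$. Your version merely makes explicit the case split (and the exclusion of $L' = L^{-1}$) that the paper leaves implicit.
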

\begin{proof}
Recall that $N=2g-2-2\deg L'$, and furthermore, if $N< 2g-2+2\deg L$, then $\deg L'> -\deg L$. Proposition \ref{vanishinglinebundle}
then implies this result. 
\end{proof}
\end{subsection}

\begin{subsection}{Regularity}
We have defined these boundary conditions both at $y=0$ and at the knot singular points by requiring the fields $(A, \phi)$ 
to differ from the corresponding model solutions by an error term, the relative size of which is smaller than the model.
In the existence theorems later in this paper this may be all we know about solutions at first. However, to be able to carry
out many further arguments it is important to know that, in an appropriate gauge, solutions have much stronger regularity 
properties.  Fortunately there is an appropriate regularity theory available which was developed in \cite{MazzeoWitten2013} in 
the Nahm pole case and \cite{MazzeoWitten2017} near the knot singularities.  We note that in those papers solutions to the full
four-dimensional KW system are treated, but those results specialize directly to the present setting, and in fact there
are some minor but important strengthenings here which we point out inter alia. 

Regularity theory relies on ellipticity, and to turn the \EBE into an elliptic system we must add an appropriate gauge condition.
We recall the choice made in \cite{MazzeoWitten2013} for the KW system on a four-manifold and then specialize it in
our dimensionally reduced setting. Fix a pair of fields $(\widehat{A}^0, \widehat{\phi}^0)$ on a four-manifold which are 
either solutions or approximate solutions of KW equations. Then nearby fields can be written in the form $(\widehat{A}, 
\widehat{\phi}) = (\widehat{A}^0, \widehat{\phi}^0) + (\alpha, \psi)$.  The gauge-fixing equation is then
\begin{equation}
d_{\widehat{A}^0}^* \alpha + \star [ \widehat{\phi}^0, \star \psi] = 0.
\label{gaugeeqn}
\end{equation}
It is shown in \cite{MazzeoWitten2013} that adjoining \eqref{gaugeeqn} to the KW equations is elliptic. 

Denote by $\calL$ the linearization of this system at $(\widehat{A}^0, \widehat{\phi}^0)$. This is a Dirac-type
operator with coefficients which blow up at $y=0$ and $R=0$ in a very special manner.  In the absence of knots,
$\calL$ is (up to a multiplicative factor) a {\it uniformly degenerate} operator, while near a knot it lies in
a slightly more general class of incomplete iterated edge operators.  These are classes of degenerate differential
operators for which tools of geometric microlocal analysis may be applied to construct parametrices, which
in turn lead to strong mapping and regularity properties.  We refer to \cite{MazzeoWitten2013}, \cite{MazzeoWitten2017}
for further discussion about all of this and simply state the consequences of this theory here. 

Before doing this we first recall that for degenerate elliptic problems it is too restrictive to expect solutions to
be smooth up to the boundary.  Instead we consider polyhomogeneous regularity.  Let $X$ be a manifold
with boundary, with coordinates $(s,z)$ near a boundary point, with $s \geq 0$ and $z$ a coordinate in the boundary. 
We say that a function $u$ is polyhomogeneous at $\pa X$ if
\[
u(s,z) \sim \sum_{j=0}^\infty \sum_{\ell=0}^{N_j}  a_{j\ell}(z) s^{\gamma_j} (\log s)^\ell, \ \ a_{j \ell} \in \calC^\infty(\pa X).
\]
The exponents $\gamma_j$ here is a sequence of (possibly complex) numbers with real parts tending to infinity; 
importantly, for each $j$, only finitely many factors with (positive integral) powers of $\log s$ can appear. 
The set of pairs $(\gamma_j, \ell)$ which appear in this expansion is called the index set for this expansion.
Denoting this index set by $\mathcal I$, we say that $u$ is $\mathcal I$-smooth, which emphasizes that this
regularity is a very close relative of and satisfactory replacement for ordinary smoothness. 
Similarly, if $X$ is a manifold with corners of codimension $2$, with coordinates $(s_1, s_2, z)$ near a point on the corner,
then $u$ is polyhomogeneous if 
\[
u(s_1, s_2, z) \sim \sum_{i, j=0}^\infty \sum_{p, q = 0}^{N_{i,j}}a_{ijpq}(z) s_1^{\gamma_i} s_2^{\lambda_j} (\log s_1)^p (\log s_2)^q.
\]
In other words, we require the expansion for $u$ to be of product type near the corner.  These are all classical expansions
with the usual meaning and the corresponding expansions for any number of derivatives hold as well. 
The reason for introducing this more general notion is precisely because at least in favorable situations, 
solutions of have this regularity but are not smooth in a classical sense.  The important point is that this
is a perfectly satisfactory replacement for smoothness up to the boundary and allows one to analyze and
manipulate expressions using these `Taylor series' type expansions.  

We first consider the case where there are no knot singularities, but note that this result is a local one and
can be applied away from knot singular points.  Here the manifold with boundary is simply $\Si \ti \RP$
and we use coordinates $(y,z)$.
\begin{proposition}[\cite{MazzeoWitten2013}]
\label{Nahmpoleexpansion}
Let $(A, \vp, \phi_1)$ be a solution to the \EBE near $y=0$ which satisfies the Nahm pole boundary conditions
and is in gauge relative to the model approximate solution.  Then these fields are polyhomogeneous with
\[
A=\MO(1), \ \varphi 
= \frac{1}{y}\begin{pmatrix} 0 & 1 \\ 0 & 0 \end{pmatrix} + \MO(y), \ \phi_1 
= \frac{1}{y}\begin{pmatrix} \frac{i}{2} & 0 \\ 0 & -\frac{i}{2} \end{pmatrix} + \MO(y\log y)\ 
\]
\end{proposition}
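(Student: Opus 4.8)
The plan is to establish polyhomogeneous regularity by applying the parametrix machinery for uniformly degenerate operators developed in \cite{MazzeoWitten2013}, with the model Nahm pole solution as the background against which the gauge condition \eqref{gaugeeqn} is imposed. First I would recall that adjoining the gauge-fixing equation to the \EBE produces an elliptic system whose linearization $\calL$ at the model solution is, up to an overall factor of $y^{-1}$, a uniformly degenerate (edge) operator in the sense of that theory. The essential structural input is the \emph{indicial operator} of $\calL$ at $y=0$: its roots (the \emph{indicial roots}) govern the leading behavior of solutions and determine which powers $y^{\gamma_j}$ can appear in the expansion. Because the Nahm pole is built from the principal $\mathfrak{sl}(2)$-triple, the indicial roots are explicit integers (shifted by the background weights), and this is precisely what forces the leading corrections at the orders stated: $A=\MO(1)$, and the off-diagonal/diagonal corrections to $\varphi$ and $\phi_1$ at order $y$.

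The key steps, in order, are as follows. (1) Linearize the gauge-augmented system at the model $(A^0,\varphi^0,\phi_1^0)$ and identify $\calL$ as a uniformly degenerate operator; write the solution as model $+$ error $(\alpha,\psi)$ satisfying a nonlinear equation whose linear part is $\calL$. (2) Compute the indicial roots of $\calL$ at $y=0$. These are determined by diagonalizing the action of the residual $\mathfrak{sl}(2)$ on the bundle $\gpp^{\mathbb C}$, giving weight spaces on which the indicial operator acts as a scalar Euler operator $y\partial_y - \gamma$; the resulting roots are the integers $\gamma \in \{-1,0,1,2\}$ (shifted appropriately by the weights), and the spacing by integers is what allows logarithmic terms to be generated by resonances. (3) Invoke the regularity theorem of \cite{MazzeoWitten2013}: a solution lying in the weighted space prescribed by the Nahm pole boundary condition, once put into gauge, admits a full polyhomogeneous expansion with index set built from these indicial roots and their integer translates (the latter arising both from the iteration of the parametrix against the nonlinear right-hand side and from the $y^2|dz|^2$-type metric factors). (4) Read off from the index set the precise leading orders: the absence of an $\MO(y^0)$ correction below the stated terms for $\varphi$ and $\phi_1$, the appearance of the first genuine correction at order $y$, and—crucially—the $y\log y$ term in $\phi_1$ arising from the resonance between the indicial root producing the $1/y$ diagonal and an integer-translated root.

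The main obstacle I expect is step (2) together with the bookkeeping in step (4): one must verify that the nonlinear interactions and the curvature/metric contributions do not produce a correction term at order lower than $y$ (which would violate the stated expansion), and one must correctly locate the resonance that generates the $\log y$ factor appearing only in the $\phi_1$ component and not in $\varphi$. This is delicate because the off-diagonal and diagonal components of the error couple through $\MD_2 = \mathrm{ad}\,\vp$, whose leading $1/y$ behavior shifts indicial roots between weight spaces; tracking which shifted root lands on a positive integer (hence is resonant) requires care. Once the indicial data and the resonance structure are pinned down, the conclusion follows formally from the parametrix construction, since the theorem of \cite{MazzeoWitten2013} guarantees that an $\mathcal I$-smooth solution exists in the index set so determined and that the stated leading terms are exactly its lowest-order entries.
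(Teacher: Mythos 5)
Your proposal is sound, but note that the paper itself offers no proof of this proposition: it is imported verbatim from \cite{MazzeoWitten2013}, with the sharpened $\MO(y\log y)$ order in $\phi_1$ credited to \cite{HeMikhaylov2017}. Your outline --- gauge-fixing to obtain ellipticity, identifying the linearization as a uniformly degenerate operator, computing indicial roots via the $\mathfrak{sl}(2)$ weight decomposition, invoking the parametrix-based polyhomogeneity theorem, and locating the resonance that produces the log term --- is precisely the route taken in those references, so it matches the argument the paper relies on.
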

This statement incorporates recent work in \cite{HeMikhaylov2017} which provides much more detail
about the expansions than is present in \cite{MazzeoWitten2013}.

To state the corresponding result in the presence of a knot singularity, we first define the manifold with
corners $X$ to be the blowup of $\Si \ti \RP$ around each of the knot singular points $(p_j, 0)$. In other
words, we replace each $(p_j, 0)$ by the hemisphere $R=0$ (parametrized by the spherical coordinate
variables $(\psi, \theta)$), points of which label directions of approach to that point.  The discussion is
local near each $p_j$ so we may as well fix coordinates $(R, \psi, \theta)$. The corner of $X$ is defined by
$R=\psi = 0$.
\begin{proposition}[\cite{MazzeoWitten2017}]
\label{Knotsingularityexpansion}
Let $(A, \phi, \phi_1)$ satisfy the \EBE near $(0,0)$ as well as the gauge condition relative to the
model knot solution $U_{n}$.  Then these fields are polyhomogeneous with the same asymptotics 
as in the previous proposition when $y \to 0$ away from the knot, while 
\[
A = A^n + \MO(R^\epsilon \sin \psi),\ \varphi = \varphi^n + \MO(R^\epsilon \sin \psi),\ \phi_1 = \phi_1^n + 
\MO(R^{\epsilon} (\sin \psi) \log(\sin \psi))
\]
near the knot.  Here $(A^n, \varphi^n, \phi_1^n)$ is the model solution described in \S 3.3 associated to $U_n$.
\end{proposition}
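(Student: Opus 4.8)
The plan is to deduce polyhomogeneity from the linear theory for the gauge-fixed system together with a nonlinear bootstrap, carried out on the blown-up space $X$ rather than on $\Si\times\RP$ directly. Since by hypothesis $(A,\phi,\phi_1)$ already satisfies the gauge condition \eqref{gaugeeqn} relative to the model $U_n$, the combined system (\EBE together with the gauge-fixing equation) is elliptic, and its linearization at the model knot solution $(A^n,\varphi^n,\phi_1^n)$ is the Dirac-type operator $\calL$ described in the Regularity subsection. First I would record the two degeneracy structures of $\calL$ on $X$: along the Nahm pole face $\{\sin\psi=0\}$ but away from the knot it is (up to a scalar factor) a uniformly degenerate operator, identical to the one governing Proposition~\ref{Nahmpoleexpansion}, while at the corner $R=\psi=0$ produced by the blowup it lies in the class of incomplete iterated edge operators. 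The strategy is then to build a parametrix for $\calL$ within this (iterated) edge calculus and to use its mapping properties on weighted Hölder spaces adapted to $X$ to promote the a priori error bound $\MO(R^{-1+\ep}(\sin\psi)^{-1+\ep})$ into a full polyhomogeneous expansion.

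The crux of the linear step is the computation of the indicial roots of $\calL$ at each boundary hypersurface of $X$. Along the Nahm pole face, away from $R=0$, the indicial operator coincides with the one from Proposition~\ref{Nahmpoleexpansion}, so its roots reproduce the singular leading terms $\varphi\sim y^{-1}$ and $\phi_1\sim y^{-1}$ together with the subleading exponents responsible for the $\MO(y)$ and $\MO(y\log y)$ tails; in particular the resonance in the $\phi_1$-component that produces $\log y$ there is exactly what will reappear, with $\sin\psi$ in place of $y$, as the $\log(\sin\psi)$ factor near the knot. At the front face $R=0$ I would linearize about the rotationally symmetric model $U_n$; because $U_n$ depends only on $(R,\psi)$, separation of variables in the link variables $(\psi,\theta)$ reduces the front-face normal operator to a family of ODEs in $\psi$ indexed by the $\theta$-Fourier modes, and solving these indicial equations produces the spectral gap above the leading order that yields the uniform $R^{\ep}$ improvement. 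I expect the gap to be strict at both faces, which is precisely what makes the iteration close.

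With the indicial data in hand, I would run the nonlinear iteration. The nonlinearity of the \EBE is purely quadratic — the wedge and commutator terms $\phi\wedge\phi$ and $[\phi,\phi_1]$ and their analogues — so products of polyhomogeneous functions stay polyhomogeneous, with index sets obtained by the extended sums and unions of the constituent index sets; each application of the parametrix gains the definite powers of $R$ and $\sin\psi$ dictated by the indicial gaps, and the scheme terminates in a convergent expansion because the first nonzero roots at both faces lie strictly above the leading orders. The hard part will be the analysis at the corner $R=\psi=0$, where the front face and the Nahm pole face meet: there the indicial roots of the two faces can be in resonance, and the delicate point is to track which resonances actually occur, so as to confirm that only the single logarithmic factor $\log(\sin\psi)$ enters the $\phi_1$ remainder while the $A$ and $\varphi$ remainders stay log-free at the stated order. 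This corner bookkeeping is exactly where the present statement strengthens \cite{MazzeoWitten2017}, and the vanishing of the off-diagonal parameter $t$ noted in Remark~\ref{sl2remark} is what keeps the model clean enough for the corner index sets to be controlled.
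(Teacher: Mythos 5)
Your sketch is, in outline, the right machinery — but note that the paper itself does not prove this proposition at all: it is quoted from \cite{MazzeoWitten2017}, with the observation that the results there (proved for the full four-dimensional Kapustin--Witten system) specialize directly to the dimensionally reduced setting. What you have written is essentially a reconstruction of the cited paper's method — gauge-fixed elliptic system, linearization $\calL$ as a Dirac-type operator that is uniformly degenerate along the Nahm pole face and an incomplete iterated edge operator at the blown-up knot, a parametrix with mapping properties on weighted spaces, indicial roots at each boundary face, and a bootstrap exploiting the quadratic nonlinearity — and this matches the description the paper itself gives of that theory, so your route coincides with the proof that actually exists in the literature rather than offering an alternative. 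One correction of emphasis, though, on the single point where the present paper adds something beyond the citation: you place the role of the vanishing of $t$ (Remark~\ref{sl2remark}) in the corner bookkeeping at $R=\psi=0$, whereas the paper uses it at the front face, to exclude the indicial root $0$ of type $II'$. That root can only arise in the \emph{diagonal} entries of $\varphi$, and the $SL(2,\mathbb{R})$ structure forces $\varphi$ to be off-diagonal; this is precisely what guarantees the strict positive gap, i.e., the $R^\epsilon$ gain in all three stated remainders, rather than a merely bounded error. Without that observation your claim that ``the gap is strict at both faces'' is an expectation, not a conclusion, since for general Higgs fields with $t\neq 0$ the exponent $0$ is admissible and the stated decay rates would fail.
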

Referring to the language of \cite{MazzeoWitten2017}, these rates of decay, i.e., the first exponents
in the expansions beyond the initial model terms, are indicial roots of type $II$ and $II'$. The exponent $0$
is a possible indicial root of type $II'$, but does not appear in our setting because the $\mathrm{SL}(2,\RR)$ 
structure forces $\varphi$ to have no diagonal terms, see Remark~\ref{sl2remark}, and it is precisely
in these diagonal terms where the exponent $0$ might appear in the expansion.
\end{subsection}

\begin{subsection}{The Boundary Condition for the Hermitian Metric}
Since we must deal with singularities of the gauge field, it is often simpler to work in holomorphic gauge but
consider singular Hermitian metrics. We now describe a boundary condition for the Hermitian metric compatible 
with the unitary boundary condition defined above.  We use the Riemannian metric $g=g_0^2|dz|^2+dy^2$ on 
$\Sigma\ti\RP$.   The following result is a direct consequence of the previous computations in 
Section \ref{subSectionNahmPole}, \ref{subsectionKnot}.
\begin{proposition}
Consider the Higgs bundle $(E\cong L^{-1}\oplus L,\vp=\begin{pmatrix}
0 & \al \\
\be & 0
\end{pmatrix})$. Fix $p\in\Sigma\ti\{0\}$ and an open set $U_p$ containing $p$. Let $H$ be a polyhomogeneous solution 
to the Hermitian Extended-Bogomolny Equations \eqref{algebraEBE}.

(1) Suppose that in a local trivilization on $U_p$, $\vp|_{U_p}=\begin{pmatrix}
0 & 1 \\
\star & 0
\end{pmatrix}$. If for some $\ep>0$, 
\begin{equation}
\begin{split}
H\sim\begin{pmatrix}
y^{-1}(g_0+\MO(y^{\ep})) & 0 \\
0 & y(g_0^{-1}+\MO(y^{\ep}))
\end{pmatrix} \quad \mbox{as}\ y \to 0,
\end{split}
\end{equation}
then the unitary solution with respect to $H$ satisfies the Nahm pole boundary condition near $p$ and $\begin{pmatrix}
0\\
1
\end{pmatrix}$ is the vanishing line bundle in this trivialization. 

(2) Suppose that in a local trivialization on $U_p$, $\vp|_{U_p}=\begin{pmatrix}
0 & z^n \\
\star & 0
\end{pmatrix}$ (where $z=0$ is the point $p$). In spherical coordinates $(R,\theta, \psi)$, suppose for some $\ep>0$,
\begin{equation}
H = \begin{pmatrix}
e^{U_n}(1+\MO(R^{\ep})) & 0 \\
0 & e^{-U_n}(1+\MO(R^{\ep}))
\end{pmatrix} \ \mbox{as}\ R \to 0.
\end{equation}
Then the unitary solution with respect to $H$  satisfies the Nahm pole condition with knot singularity at $p$ and $\begin{pmatrix}
0\\
1
\end{pmatrix}$ is the vanishing line bundle in this trivialization. .
\end{proposition}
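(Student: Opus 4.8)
The plan is to translate the metric data directly into unitary fields and compare with the model solutions of \S\ref{subSectionNahmPole} and \S\ref{subsectionKnot}. Since $H$ is diagonal and positive, I would set $g=H^{1/2}$, a diagonal positive complex gauge transformation with $g^\da g=H$ (any other choice differs by a unitary, which is exactly the residual $SU(2)$ freedom, matching the fact that the conclusion is only up to unitary gauge). By the proposition linking holomorphic and unitary frames, the unitary representative is then read off algebraically from $g$ and its first derivatives:
\begin{gather*}
\phi_z = g\vp g^{-1},\qquad \hA_z = (g^\da)^{-1}\pa_z g^\da,\\
A_y = \tfrac{1}{2}\big((\pa_y g)g^{-1}-(g^\da)^{-1}\pa_y g^\da\big),\qquad \phi_1 = \tfrac{i}{2}\big((g^\da)^{-1}\pa_y g^\da + \pa_y g^\da (g^\da)^{-1}\big).
\end{gather*}
The observation that makes this ``a direct consequence of the previous computations'' is that the flat model metrics $\mathrm{diag}(y^{-1},y)$ and $\mathrm{diag}(e^{U_n},e^{-U_n})$ are precisely the metrics $g^\da g$ attached in \S\ref{subSectionNahmPole}, \S\ref{subsectionKnot} to the model Nahm pole and model knot solutions, the factor $g_0$ being the conformal factor of the surface metric $g_0^2|dz|^2+dy^2$. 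It therefore suffices to show that the relative corrections $1+\MO(y^\ep)$ in case (1), and $1+\MO(R^\ep)$ in case (2), perturb the model fields only within the tolerances permitted by the two boundary conditions.

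For the leading behaviour in case (1), $g=\mathrm{diag}\big(y^{-1/2}g_0^{1/2},\,y^{1/2}g_0^{-1/2}\big)(1+\MO(y^\ep))$, so $\phi_z=g\vp g^{-1}$ has upper-right entry $g_0 y^{-1}+\MO(y^{-1+\ep})$ and lower-left entry $\MO(y)$, while $\phi_1=i\,\pa_y g\,g^{-1}$ has diagonal $\pm\tfrac{1}{2y}+\MO(y^{-1+\ep})$ with no conformal factor (since $\pa_y\log g_0=0$), and $\hA_z=\pa_z\log g=\MO(1)$. Thus $\phi_1$ and $A$ reproduce the Nahm pole model exactly to leading order, and $\phi_z$ reproduces it up to the factor $g_0$, which is the correct curved-metric Nahm pole coefficient for a $(1,0)$-form; all errors have size $\MO(y^{-1+\ep})$, the tolerance in the definition of the Nahm pole condition. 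Case (2) is identical with $y^{-1/2}$ replaced by $e^{U_n/2}$, so that $\phi_z$, $\phi_1$ and $A$ reproduce the model knot fields $(A^n,\vp^n,\phi_1^n)$ to leading order. The identification of the vanishing line bundle is immediate and does not even require matching the fields: a $\MD_3$-parallel section is $y$-independent in the holomorphic gauge, $s=(a,b)^\top$, whence $|s|_H^2=|a|^2H_{11}+|b|^2H_{22}$; since $H_{11}\to\infty$ and $H_{22}\to 0$ as $y\to 0$ (respectively as $U_n\to+\infty$), one has $|s|_H\to 0$ exactly when $a=0$, so the vanishing line bundle is $\mathrm{span}\,\begin{pmatrix}0\\1\end{pmatrix}$.

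The one genuinely delicate point, where I expect the main obstacle to lie, is the control of the error terms, because the formulas above differentiate $g$ and hence the relative corrections. Differentiating $\MO(y^\ep)$ in $y$ costs a power of $y$ but leaves the result inside $\MO(y^{-1+\ep})$, so case (1) is straightforward. In case (2) the spherical derivative $\pa_y=\sin\psi\,\pa_R+R^{-1}\cos\psi\,\pa_\psi$ produces factors of $R^{-1}$, and the $-\log y=-\log R-\log\sin\psi$ structure inside $U_n$ generates the logarithmic factors $\log\sin\psi$ seen in Proposition~\ref{Knotsingularityexpansion}. The essential input is that $H$ is assumed polyhomogeneous, so its expansion may be differentiated term by term; one then checks that each resulting error lies inside the admissible bound $\MO(R^{-1+\ep}(\sin\psi)^{-1+\ep})$ of the knot boundary condition (noting that $(\sin\psi)^{-1+\ep}\ge 1$ absorbs the bare $R^{-1}$ factors). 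The careful bookkeeping occurs near the corner $R=\psi=0$, where the knot and Nahm pole regimes meet, and here I would invoke the iterated-edge/polyhomogeneous framework of \cite{MazzeoWitten2017} both to justify the term-by-term differentiation and to confirm that the $\log\sin\psi$ terms are the admissible ones rather than a genuine obstruction.
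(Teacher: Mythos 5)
Your proposal is correct and follows exactly the route the paper intends: the paper gives no written proof, stating only that the result is ``a direct consequence of the previous computations'' in Sections \ref{subSectionNahmPole}--\ref{subsectionKnot}, and your argument---taking $g=H^{1/2}$, reading off the unitary fields from the frame-change formulas of Section 2.2, comparing with the model Nahm pole and knot fields, and identifying the vanishing line bundle via $\MD_3$-parallel sections---is precisely that computation, with the error terms controlled by the assumed polyhomogeneity. Your flagged concern about the corner $R=\psi=0$ and the appeal to the iterated-edge framework of \cite{MazzeoWitten2017} is the right way to close the one point the paper leaves implicit, so no gap remains relative to the paper's own level of detail.
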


Since we wish to work with holomorphic gauge fields and singular Hermitian metrics, we obtain some restrictions. 
Let $P$ be an $SU(2)$ bundle and $(A,\phi,\phi_1)$ a solution to the Extended Bogonomy Equations \eqref{EBE} with 
Nahm pole boundary and knot singularities of order $n_j$ at the points $p_j$, $j=1,\cdots,n$. For each $j$ choose small balls 
$B_j$ around $p_j$, and also let $B_0$ be a neighborhood of $\Sigma\setminus\{B_1,\cdots,B_k\}$ which does not contain any 
of the $p_j$. Choosing a partition of unity $\chi_j$ subordinate to this cover, define the approximate solution 
$u=\sum_{j=0}\chi_j U_{n_j}$ where $U_{n_j}$ is the model solution, and with $U_{n_0}=-\log y$. 
\begin{proposition}
\label{metricaprior}
There exists a Hermitan bundle $(E,H)$ such that: 

(1) $(H,A^{(0,1)},\vp,\MA_y)$ is a solution to the Hermitian Extended Bogomolny equations;

(2) $(A^{(0,1)},\vp,\MA_y)$ is bounded as $y\to 0$;

(3) $H=\begin{pmatrix}
e^{u}h_{11} & h_{12} \\
h_{21} & e^{-u}h_{22}
\end{pmatrix}$, where $u$ is the approximate function above and the $h_{ij}$ are bounded. 
\end{proposition}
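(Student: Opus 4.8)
The plan is to keep the given unitary solution $(A,\phi,\phi_1)$ fixed and to produce $(E,H)$ by conjugating it into a holomorphic gauge adapted to the singular structure. I would form the singular \emph{diagonal} complex gauge transformation $g=\mathrm{diag}(e^{u/2},e^{-u/2})$, where $u=\sum_{j}\chi_j U_{n_j}$ is the patched model exponent of the statement ($U_{n_0}=-\log y$ on the bulk $B_0$, $U_{n_j}$ near each knot $p_j$). Near each singular locus $g$ agrees with the model gauge $g_{n_j}$ of Section 3.3 (resp.\ $\mathrm{diag}(y^{-1/2},y^{1/2})$ on $B_0$), so $g$ is a genuine invertible complex gauge transformation for $y>0$. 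Part (1) is then immediate from the complex-gauge invariance of the Hermitian \EBE recorded at the end of Section 2: conjugating a solution by $g$ yields another solution $(H,A^{(0,1)},\vp,\MA_y)$ of \eqref{ezHBE}.

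Part (3) is the easy half, and it hinges on the diagonal form of $g$ together with the fact that the Hermitian metric transforms by \emph{congruence}, $H=g^{\da}H_u\,g$, whereas the holomorphic data transforms by \emph{conjugation}. Here $H_u$ is the metric in the gauge-fixed frame of Propositions \ref{Nahmpoleexpansion}--\ref{Knotsingularityexpansion}, which lies within bounded distance of a unitary frame and is therefore bounded with bounded inverse. Since $g$ is real and diagonal with reciprocal entries, the congruence multiplies the $(1,1)$ and $(2,2)$ entries by $e^{u}$ and $e^{-u}$ but leaves the off-diagonal entries unscaled, because $e^{u/2}\cdot e^{-u/2}=1$:
\[
H=\begin{pmatrix} e^{u}(H_u)_{11} & (H_u)_{12} \\ (H_u)_{21} & e^{-u}(H_u)_{22}\end{pmatrix}.
\]
This is exactly the claimed form, with $h_{ij}=(H_u)_{ij}$ bounded; the conformal factor in $h_{11}\sim g_0$, $h_{22}\sim g_0^{-1}$ is carried by $H_u$ and the normalization of the frame against $\vp$, as in the preceding proposition of this subsection. (As a check, the diagonal asymptotics also follow from \eqref{99} by integrating $\pa_y\log H_{11}\sim -1/y$ against the Nahm pole value of $\phi_1$.)

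Part (2) is where I expect the real difficulty. Conjugation sends $\MD_i\mapsto g^{-1}\MD_i g$, and for diagonal $g$ this \emph{amplifies} the lower-left matrix entries by $e^{u}\sim(yR^{n}S_n)^{-1}$ while damping the upper-right ones. The leading model terms cancel against $g$ exactly, as in the model computations of Sections 3.2--3.3, so the boundedness of $(A^{(0,1)},\vp,\MA_y)$ reduces to showing that the lower-left \emph{correction} terms decay fast enough to absorb this amplification. Near $y=0$ this is clean: Proposition \ref{Nahmpoleexpansion} supplies corrections of size $\MO(y)=\MO(e^{-u})$, exactly the order needed, and the patching errors $\bpa\chi_j$ are harmless since $U_{n_j}\sim-\log y$ off $p_j$, so the competing exponents agree to leading order on the overlaps. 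Near the knots the bookkeeping is more delicate, and this is the crux: I would invoke the full polyhomogeneous expansion of Proposition \ref{Knotsingularityexpansion}, whose type $II$ and $II'$ indicial roots govern the correction rates, together with the observation in Remark \ref{sl2remark} that the $SL(2,\RR)$ structure forces $\vp$ to have vanishing diagonal entry ($t=0$). It is precisely the vanishing of $t$ that excludes the indicial root $0$ and ensures the lower-left corrections decay at least as fast as $e^{-u}$, so that the conjugated data remains bounded after the singular rescaling. Patching the resulting local estimates over $\{B_0,B_j\}$ by polyhomogeneity then delivers the global data $(E,H)$ in the asserted form.
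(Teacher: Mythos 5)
Your proposal is correct and follows essentially the same route as the paper's own proof: apply the singular diagonal complex gauge transformation $g=\mathrm{diag}(e^{u/2},e^{-u/2})$ to the given unitary solution, obtaining (1) from the complex-gauge invariance of solutions recorded at the end of Section 2, (3) from the congruence transformation $H=g^{\da}H_0\,g$ of the Hermitian metric, and (2) from the polyhomogeneous regularity of Propositions \ref{Nahmpoleexpansion} and \ref{Knotsingularityexpansion}. If anything, your treatment of part (2) --- the $e^{u}$-amplification of the lower-left correction entries near the knots and the role of $t=0$ in excluding the indicial root $0$ --- is more explicit than the paper's proof, which simply asserts boundedness of the transformed data from the compatibility of $u$ with the knot singularities.
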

\proof  We have explained that $(A,\phi,\phi_1)$ is polyhomogeneous, i.e., $(A,\phi,\phi_1)= (A^{p_j},\phi^{p_j},\phi_1^{p_j})+(a,b,c)$
near $p_j$, where $(a,b,c)$ are bounded. Near other points of $\Sigma \ti \{0\}$ $(A,\phi,\phi_1)$ is the sum of a Nahm pole
and a bounded term. Since $P$ is an $SU(2)$ bundle over $\Sigma\ti \RP$, it is necessarily trivial, so consider the associated
rank $2$ Hermitian bundle $(E,H_0)$, with $H_0 = \mathrm{Id}$ in some trivialization. Now write $H=
\begin{pmatrix}
h_{11} & h_{12} \\
h_{21} & h_{22}
\end{pmatrix}$ where the $h_{ij}$ are bounded. Then $(H_0,A^{(0,1)},\vp,\MA_y)$, where $\vp=\phi_z$, $\MA_y=A_y-i\phi_1$, 
is a solution to the Hermitan extended Bogomolny equations \eqref{algebraEBE}. Consider the complex gauge transform 
$g=\begin{pmatrix}
e^{\frac u2} & 0 \\
0 & e^{-\frac{u}{2}}
\end{pmatrix}$. Since $u$ is compatible with the knot singularity, we obtain a new solution
$(H',A^{(0,1)'},\vp',\MA_y')$, $H'=H_0g^{\da}g=\begin{pmatrix}
e^{u}h_{11} & h_{12} \\
h_{21} & e^{-u}h_{22}
\end{pmatrix}$, and $A^{(0,1)'},\vp',\MA_y'$ are all bounded. 
\qed

\medskip

We conclude this section with a brief discussion about the regularity of a harmonic metric which satisfies the boundary
conditions described here.  Such metrics correspond precisely to the solutions $(A_z, A_y ,\varphi,\phi_1)$ of the original 
\EBE, and for this reason one obvious route to obtain this regularity is to exhibit the direct formula from the  set of
$A's$ and $\phi's$ to the metric $H$.  Another reasonable approach is to simply look at the equation \eqref{ezHBE} 
defining $H$ and prove the necessarily regularity directly from this equation.    In fact, the methods used
in \cite{MazzeoWitten2013} and \cite{MazzeoWitten2017} are sufficiently robust that this adaptation
is quite straightforward.  In the interests of efficiency, we simply state the conclusion:
\begin{lemma}
A harmonic metric $H$ which satisfies the boundary conditions discussed above is necessarily polyhomogeneous.
\end{lemma}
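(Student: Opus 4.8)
The plan is to transfer the polyhomogeneous regularity of the unitary fields, already guaranteed by Propositions~\ref{Nahmpoleexpansion} and~\ref{Knotsingularityexpansion}, onto the metric $H$ through the complex gauge transformation relating the unitary and holomorphic parallel frames. Recall from \eqref{relationship} and \eqref{99} that the transformation $g$ with $H=g^{\da}g$ intertwines the holomorphic parallel gauge, in which $\MD_1=\bpa$ and $\MD_3=\pa_y$, with the unitary solution $(A,\vp,\phi_1)$. The content of these relations is that $g$ is, up to a holomorphic frame change, a simultaneous $\MD_1$- and $\MD_3$-parallel frame; concretely it solves the first order linear system $\bpa g=-A^{(0,1)}g$ and $\pa_y g=-\MA_y g$, whose coefficients are the $(0,1)$-part of the unitary connection and the field $\MA_y=A_y-i\phi_1$. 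These two equations are compatible precisely because $[\MD_1,\MD_3]=0$, which is one of the equations packaged in \eqref{algebraEBE}. Thus the question reduces to the regularity of solutions of this transport system with polyhomogeneous coefficients, all computations taking place on the blown-up manifold with corners $X$ introduced before Proposition~\ref{Knotsingularityexpansion}.

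First I would solve the $y$-direction equation $\pa_y g=-\MA_y g$. Away from the knot, Proposition~\ref{Nahmpoleexpansion} gives $\MA_y\sim\tfrac{1}{2y}\,\mathrm{diag}(1,-1)$, so this is a regular-singular ODE whose residue at $y=0$ has eigenvalues $\pm\tfrac12$; its fundamental solution is therefore polyhomogeneous with leading behaviour $\mathrm{diag}(y^{-1/2},y^{1/2})$, matching the model gauge transformation, and with higher terms dictated by the polyhomogeneous expansion of $\MA_y$. Near each knot the same reasoning on $X$ produces the model factor $\mathrm{diag}(e^{U_n/2},e^{-U_n/2})$ times polyhomogeneous corrections, using Proposition~\ref{Knotsingularityexpansion}. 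I would then impose the $\bpa$-equation, which determines $g$ up to a $\MD_1$-holomorphic frame and whose regularity is controlled by the ellipticity of $\MD_1$ up to the boundary faces. Since the singular model factors $y^{\pm1/2}$ and $e^{\pm U_n/2}$ are themselves polyhomogeneous on $X$, and since the polyhomogeneous class is closed under products, inverses of invertible elements, and composition with the exponential, it follows that $g$, and hence $H=g^{\da}g$, is polyhomogeneous.

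An essentially equivalent and slightly more direct route is to analyse \eqref{ezHBE} for $H$ itself. Using Proposition~\ref{metricaprior} one writes $H=H_{\mathrm{mod}}\cdot(\text{bounded})$, where $H_{\mathrm{mod}}$ is the explicit polyhomogeneous model metric obtained by patching the $\mathrm{diag}(y^{-1},y)$ Nahm pole model to the $\mathrm{diag}(e^{U_n},e^{-U_n})$ knot models via the partition of unity of that proposition, and substitutes into \eqref{ezHBE}. Subtracting the model reduces matters to a nonlinear equation of the form $\mathcal{L}w=F(w)$ for a bounded perturbation $w$, whose linear part $\mathcal{L}$ is, up to conjugation and the conformal factor $g_0^2$, the same uniformly degenerate operator near $y=0$ and incomplete iterated edge operator near the knot that were described before Proposition~\ref{Nahmpoleexpansion}. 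The a priori boundedness from Proposition~\ref{metricaprior} places $w$ in a weighted space on which the geometric microlocal parametrices of \cite{MazzeoWitten2013} and \cite{MazzeoWitten2017} act, and the standard bootstrap—apply the parametrix, read off the index set from the indicial roots of $\mathcal{L}$, and feed the improved decay back into the polyhomogeneous nonlinearity $F(w)$—yields polyhomogeneity of $w$ and therefore of $H$.

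The main obstacle, and the one point requiring genuine care, is the behaviour at the corner of $X$ where the Nahm pole face $\{y=0\}$ meets the knot blowup face $\{R=0\}$: there polyhomogeneity means a product-type expansion in the two boundary defining functions, and one must verify that the transport and elliptic solution operators—equivalently, the degenerate parametrices—respect this product structure and introduce no unexpected indicial roots beyond those recorded in Propositions~\ref{Nahmpoleexpansion} and~\ref{Knotsingularityexpansion}. This is exactly the configuration the iterated edge calculus of \cite{MazzeoWitten2017} is built to handle, so once the model singular factors are recognized as polyhomogeneous on $X$ and the function class is seen to be closed under the operations used, the argument goes through; the nonlinear terms are harmless because products of polyhomogeneous functions are again polyhomogeneous.
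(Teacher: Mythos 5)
Your proposal is correct and follows essentially the same path as the paper: the paper itself only sketches this lemma, pointing to exactly the two routes you develop --- transferring regularity from the polyhomogeneous unitary fields via the gauge transformation $g$ with $H=g^{\da}g$, or bootstrapping the Hermitian equation \eqref{ezHBE} directly with the parametrix methods of \cite{MazzeoWitten2013} and \cite{MazzeoWitten2017} --- and then simply states the conclusion. Your write-up supplies more detail (the regular-singular ODE analysis in $y$, the model factors on the blown-up space, and the product-type expansion at the corner) than the paper's own discussion, but the underlying strategy is identical.
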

The terms which appear in the polyhomogeneous expansion of $H$ may be determined by the obvious formal calculations 
once we know that the expansion actually exists. 
\end{subsection}

\end{section}

\begin{section}{Existence of Solutions}
We shall prove in this section an existence theorem for the \EBE on $\Si \ti \RP$, either without or with knot singularities at $y=0$. 
The proofs employ the classical barrier method, which we review briefly.
\begin{subsection}{Semilinear Elliptic Equations on Noncompact Manifolds}
We consider on a Riemannian manifold $(W,g)$ the elliptic equation
\begin{equation}
N(u) := -\Delta u+F(x,u)=0, \ \ F \in \calC^\infty(W \times \mathbb R).
\label{Yamb}
\end{equation}
A $\MC^2$ function $u^+$ is called a supersolution for this problem if $N(u^+) \geq 0$, while $u^-$ is called a subsolution
if $N(u^-) \leq 0$. These are called {\it  barriers} for the operator. It is often much simpler to construct such functions which are 
only continuous, and which satisfy the corresponding differential inequalities weakly (either in the distributional or viscosity sense). 
\begin{proposition}
Suppose that $W$ is a possibly open manifold, and that there exist continuous barriers $u^\pm$ which satisfy
$u^- \leq u^+$ everywhere on $W$. Then there exists a solution $u$ to $N(u) = 0$ which satisfies  $u^- \leq u \leq u^+$. 
\label{KWE}
\end{proposition}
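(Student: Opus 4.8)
The plan is to reduce to the classical method of sub- and supersolutions on compact domains and then pass to the limit using an exhaustion of $W$. Fix an exhaustion $\Omega_1 \subset\subset \Omega_2 \subset\subset \cdots$ of $W$ by relatively compact open sets with smooth boundary and $\bigcup_k \Omega_k = W$. On each $\overline{\Omega_k}$ the continuous barriers $u^\pm$ are bounded, so the constraint $u^- \le u \le u^+$ confines $u$ to a compact interval on which $F(x,\cdot)$ and $\partial_t F$ are bounded. The strategy is: (i) solve the Dirichlet problem for $N(u)=0$ on each $\Omega_k$ between the barriers; (ii) use the two-sided bound to obtain uniform interior elliptic estimates; (iii) extract a $C^2_{loc}$ limit by a diagonal argument and verify it solves the equation on all of $W$. (If $W$ is already compact, step (i) applied to $W$ itself suffices.)

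First I would solve the problem on a fixed $\Omega = \Omega_k$ by monotone iteration. Since $F$ is smooth and $u$ ranges over the compact set $\{(x,t) : x \in \overline\Omega,\ u^-(x) \le t \le u^+(x)\}$, choose $\lambda > 0$ so large that $t \mapsto \lambda t - F(x,t)$ is nondecreasing on this set, which is possible because $\partial_t F$ is bounded there. Set $w_0 = u^+$ and let $w_{j+1}$ solve the linear Dirichlet problem $(-\Delta + \lambda) w_{j+1} = \lambda w_j - F(x, w_j)$ on $\Omega$ with $w_{j+1} = u^+$ on $\partial\Omega$. The maximum principle for $-\Delta + \lambda$, combined with the supersolution inequality $-\Delta u^+ + F(x,u^+) \ge 0$, the corresponding subsolution inequality for $u^-$, and the monotonicity of the right-hand side, shows inductively that $u^- \le w_{j+1} \le w_j \le u^+$. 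Hence $w_j$ decreases to a limit $u_k$ solving $-\Delta u_k + F(x,u_k) = 0$ in $\Omega$ with $u^- \le u_k \le u^+$. When the barriers are merely continuous and satisfy the inequalities weakly, I would instead invoke Perron's method, taking $u_k$ to be the supremum of all subsolutions lying between $u^-$ and $u^+$; the comparison principle for the shifted operator $-\Delta + \lambda$ guarantees this supremum is a weak, hence by elliptic regularity classical, solution.

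Next I would pass to the limit $k \to \infty$. On any fixed compact $K \subset W$ we have $K \subset \Omega_k$ for all large $k$, and the bound $u^- \le u_k \le u^+$ gives a uniform $L^\infty$ bound on $K$ independent of $k$. Consequently $F(\cdot, u_k)$ is uniformly bounded on a slightly larger compact set, and interior $L^p$ and Schauder estimates for $-\Delta$ yield uniform $C^{2,\alpha}(K)$ bounds, again independent of $k$. By Arzel\`a--Ascoli and a diagonal extraction over an exhausting family of compacta, a subsequence of $u_k$ converges in $C^2_{loc}(W)$ to some $u$. Because solving $N(u)=0$ is a purely local condition, the limit satisfies $-\Delta u + F(x,u) = 0$ throughout $W$, and the two-sided bounds pass to the limit to give $u^- \le u \le u^+$.

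The main obstacle is ensuring that the interior estimates in the limiting step are genuinely uniform in $k$: this is precisely where the two-sided confinement by the barriers is essential, since it is what controls the nonlinear term $F(\cdot,u_k)$ uniformly on compact sets and thereby restores the compactness that is otherwise lost on the open manifold $W$. A secondary technical point, flagged in the statement, is that the barriers need only be continuous weak sub/supersolutions; handling this rigorously requires either a mollification of the barriers compatible with the differential inequalities or the direct Perron construction above, both of which are standard and will be invoked rather than reproved here.
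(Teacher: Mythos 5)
Your proposal is correct and follows essentially the same route as the paper's proof: monotone iteration with the shifted operator $-\Delta+\lambda$ on compact pieces (you iterate downward from $u^+$ where the paper iterates upward from $u^-$, an immaterial difference), followed by exhaustion of $W$, uniform bounds from the barrier confinement, and elliptic regularity plus a diagonal argument to extract a limiting solution. The only divergence is cosmetic: for merely continuous weak barriers you invoke Perron's method or mollification, while the paper cites a boundary maximum principle for viscosity solutions, but both serve the same purpose.
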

\proof  (Sketch)  We first assume that $W$ is a compact manifold with boundary. Then $u^\pm$ are bounded functions
and we may choose $\lambda > 0$ so that $\pa_u F(x,u) \leq \lambda$ for all numbers $u$ lying in the interval
$[u^-(x), u^+(x)]$ for every $x \in W$. The equation can then be written as
\[
(\Delta - \lambda) u = \widetilde{F}(x,u) := F(x,u) - \lambda u.
\]
We then define a sequence of functions $u_j$, $j = 0, 1, 2, \ldots$, by setting $u_0 = u^-$ and successively solving
$(\Delta - \lambda) u_{j+1} = \widetilde{F}(x, u_j)$, and with $u_{j+1}$ equal to some fixed function $\psi$ on $\pa W$
which satisfies $u^-|_{\pa W} \leq \psi \leq u^+|_{\pa W}$. The monotonicity of $\widetilde{F}$ in $u$ and the maximum principle
can be used to prove inductively that $u^- = u_0 \leq u_1 \leq u_2 \leq \cdots \leq u^+$.  When $W$ is a manifold
with boundary we require a version of the maximum principle which holds up to the boundary even for weak solutions;
one version appears in \cite[ Theorem II.1]{ishii1990viscosity}. 

It is then obvious that $u_j$ converges pointwise to an $L^\infty$ function $u$, and standard elliptic regularity implies
that $u \in \MC^\infty$ and that $N(u) = 0$. 

Now suppose that $W$ is an open manifold.  Choose a sequence of compact smooth manifolds with boundary $W_k$
with $W_1 \subset W_2 \subset \cdots$, which exhaust all of $W$.  For each $k$, choose a function $\psi_k$ on $\pa W_k$
which lies  between $u^-$ and $u^+$ on this boundary, and then find a solution $u_k$ to $N(u_k) = 0$
on $W_k$, $u_k = \psi_k$ on $\pa W_k$.  The sequence $u_k$ is uniformly bounded on any compact subset of $W$, so we may 
choose a sequence which converges (by elliptic regularity) in the $\MC^\infty$ topology on any compact subset of $W$. The limit 
function is a solution of $N$ and still satisfies $u^- \leq u \leq u^+$ on all of $W$. 
\qed

We conclude this general discussion by making a few comments about the construction of weak barriers.  A very convenient principle is that sub- and 
supersolutions may be constructed locally in the following sense.  Suppose that $\calU_1$ and $\calU_2$ are two open sets in $W$ and that
$w_j$ is a supersolution for $N$ on $\calU_j$, $j = 1, 2$.  Define the function $w$ on $\calU_1 \cup \calU_2$ by setting $w = w_1$ on
$\calU_1 \setminus (\calU_1 \cap \calU_2)$, $w = w_2$ on $\calU_2 \setminus (\calU_1 \cap \calU_2)$, and $w = \min\{w_1, w_2\}$ on
$\calU_1 \cap \calU_2$. Then $w$ is a supersolution for $N$ on $\calU_1 \cup \calU_2$.  Similarly, the maximum of two (or any finite number)
of subsolutions is again a subsolution.   In our work below, the individual $w_j$ will typically be smooth, but the new barrier $w$ produced
in this way is only piecewise smooth, but is still a sub- or supersolution in the weak sense.  We refer to \cite[Appendix A]{ChruscielMazzeo} for a proof. 
\end{subsection}

\begin{subsection}{The Scalar Form of the Extended Bogomolny Equations}
Following the discussion in \S 3, suppose that $E \cong L \oplus L^{-1}$ and
\begin{equation}
\vp=\begin{pmatrix}
0 & \alpha \\
\beta & 0
\end{pmatrix}. 
\label{nonHitchin}
\end{equation}
When $\deg L = g-1$, $L = K^{1/2}$ and $\alpha = 1$, we seek a solution of the \EBE which satisfies the Nahm pole boundary
condition at $y=0$, while if $\deg L < g-1$, then the zeroes of $\alpha$ determine points and multiplicities $p_j$ and $n_j$ on $\Si$
at $y=0$ and we search for a solution which satisfies the Nahm pole boundary condition with knot singularities at these points. 

Fix a metric $g=g_0^2|dz|^2+dy^2$ on $\Si \ti \RP$ (where $z=x_2+ix_3$ is a local holomorphic coordinate on $\Si$), and assume also that 
the solution metric splits as 
$H= \begin{pmatrix}
h & 0 \\
0 & h^{-1}
\end{pmatrix}$, where $h$ is a bundle metric on $L^{-1}$.  We are then looking for a solution to 
\begin{equation}
\begin{split}
-\Delta_{g} \log h + g_0^{-2}(h^2\alpha \bar{\al}-h^{-2}\be \bar{\be})=0.
\label{localequation}
\end{split}
\end{equation}
We simplify this slightly further. Choose a background metric $h_0$ on $L^{-1}$ and recall that its curvature equals $-\Delta_{g_0} \log h_0$.
Then writing $h=h_0e^u$ and calculating the norms of $\alpha$ and $\beta$ in terms of $g_0$ and $h_0$, \eqref{localequation} becomes
\begin{equation}
K_{h_0}-(\Delta_{g_0}+\pa_y^2) u+|\al|^2e^{2u}-|\be|^2e^{-2u}=0.
\label{scaleEBE}
\end{equation}
In the remainder of this paper, we denote by $N(u)$ the operator on the left in \eqref{scaleEBE}.

An explicit solution to this equation was noted by Mikhaylov in a special case \cite{MikhaylovVpersonal}:
\begin{example}
Consider the Higgs pair $(E\cong K^{\frac 12}\oplus K^{-\frac 12},\vp= \begin{pmatrix}
0 & 1 \\
0 & 0
\end{pmatrix})$. Let $g_0$ be the hyperbolic metric on $\Sigma$ with curvature $-2$ and $h_0$ the naturally induced metric on $K^{-1/2}$, for 
which $K_{h_0}=-1$. Then restricted to $\Si$-independent functions, \eqref{scaleEBE} equals
\begin{equation}
\begin{split}
-1-\pa_y^2 u+e^{2u}=0.
\end{split}
\label{ODE}
\end{equation}
We seek a solution for which $u\sim -\log y$ as $y\to 0$ and $v\to 0$ as $y\to \infty$. The first integral of \eqref{ODE} is $u'=-\sqrt{e^{2u}-2u-1}$, and
hence the unique solution is
\begin{equation}
\int_{u}^{\infty}\frac{ds}{\sqrt{e^{2s}-2s-1}}=y.
\end{equation}
Note that $u$ is monotone decreasing and strictly positive for all $y > 0$. 

We now describe the precise asymptotics of this solution.  If $u \to \infty$, then $s$ is large; write the denominator as $e^s \sqrt{1 - (2s+1)e^{-2s}}$, 
whence 
\[
y = \int_u^\infty e^{-s} (1 + \tfrac12 (2s+1) e^{-2s} + \ldots) \, ds \sim e^{-u} + \ldots,
\]
so $u \sim -\log y$. Similarly, if $u < \epsilon$ for some small $\epsilon$, then $e^{2s} - 2s-1 \sim 2s^2 + \ldots$ when $u < s < \epsilon$, so
\[
u = \int_\epsilon^\infty \frac{ds}{\sqrt{e^{2s}-2s-1}} + \int_u^\epsilon (\tfrac{1}{\sqrt{2}s} + \ldots )\, ds = A - \tfrac{1}{\sqrt{2}}\log u + \ldots,
\]
so $u = Ce^{- \sqrt{2} y} + \ldots$.  Obviously, with only a little more effort, one may develop full asymptotics in both regimes. 
\end{example}
\end{subsection}

\begin{subsection}{Limiting Solution at Infinity}
We first consider the simpler problem of finding a solution of the reduction of \eqref{scaleEBE} reduced to $\Si$, i.e., of 
\begin{equation}
K-\Delta u_\infty +|\al|^2e^{2u_\infty}-|\be|^2e^{-2u_\infty}=0,
\label{102}
\end{equation}
where $K = K_{h_0}$ and $\Delta = \Delta_{g_0}$. Without loss of generality, we assume $\deg L\geq 0$ and note that since $\deg L^{-1} \leq 0$, $\int_\Si K \leq  0$ (and is strictly
negative if the degree of $L$ is positive). A solution to \eqref{102} is the obvious candidate for the limit as $y \to \infty$ of solutions 
on $\Si \ti \RP$.   

\begin{proposition}
If $\alpha \not\equiv 0$, which is equivalent to the stability of the pair $(E, \vp)$, there exists a solution $u_\infty \in \calC^\infty(\Si)$ to \eqref{102}.
\end{proposition}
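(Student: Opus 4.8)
The plan is to solve the semilinear elliptic equation \eqref{102} on the compact surface $\Si$ by the method of sub- and supersolutions, exactly as packaged in Proposition \ref{KWE}. Since $\Si$ is compact without boundary, once I produce a continuous subsolution $u^-$ and a continuous supersolution $u^+$ with $u^- \le u^+$ everywhere, the monotone iteration scheme yields a smooth solution $u_\infty$ with $u^- \le u_\infty \le u^+$. So the entire task reduces to constructing global barriers for the operator
\[
N_\infty(u) = K - \Delta u + |\al|^2 e^{2u} - |\be|^2 e^{-2u}.
\]
Here I write $f(u) := K + |\al|^2 e^{2u} - |\be|^2 e^{-2u}$, which is strictly increasing in $u$, so the structural hypothesis of the barrier method is met.

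For the supersolution I would look for a constant $u^+ = C_+$. A constant is a supersolution iff $f(C_+) \ge 0$, i.e. $K + |\al|^2 e^{2C_+} - |\be|^2 e^{-2C_+} \ge 0$ pointwise on $\Si$. Because $\al \not\equiv 0$ the term $|\al|^2 e^{2C_+}$ grows without bound as $C_+ \to +\infty$, and this dominates the fixed quantities $K$ and $-|\be|^2 e^{-2C_+} \le 0$ on the locus where $\al \ne 0$; the delicate region is the zero set of $\al$, where one only has $K - |\be|^2 e^{-2C_+}$. To handle the zeros of $\al$ I would not use a pure constant but perturb it slightly, or more robustly exploit the local supersolution gluing principle recalled after Proposition \ref{KWE}: away from the zeros of $\al$ a large constant works, while in a small neighborhood of each zero of $\al$ one constructs a local supersolution tending to $+\infty$ (using that $-\Delta$ applied to a suitable logarithmic or radial bump can absorb the sign of $K - |\be|^2 e^{-2u}$), and then takes the minimum on the overlaps to produce a global weak supersolution. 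For the subsolution I would symmetrically take $u^- = C_-$ with $C_-$ very negative, so that $-|\be|^2 e^{-2C_-}$ dominates and $f(C_-) \le 0$; again the zeros of $\be$ (if any) are the only places requiring the local gluing device, using the maximum of local subsolutions.

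The main obstacle is precisely the behavior at the zero sets of $\al$ and $\be$, where the coefficients of the two exponential terms degenerate and a single global constant barrier can fail to have the right sign of $f$. The cleanest route is to lean on the local construction principle: since sub/supersolutions glue by taking max/min over overlaps, it suffices to produce barriers on each chart separately and patch. One expects that near a zero of $\al$ the required supersolution blows up logarithmically (mirroring the $-\log y$ and knot-type asymptotics that already appear in this paper), which is exactly the kind of local model one can write down explicitly and check the differential inequality for. Once both barriers are in hand with $u^- \le u^+$ (guaranteed by choosing $C_-$ small enough and $C_+$ large enough, and arranging the local pieces to respect this ordering), Proposition \ref{KWE} applied with $W = \Si$ delivers the desired $u_\infty \in \calC^\infty(\Si)$, completing the proof. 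I would also remark that the equivalence of $\al \not\equiv 0$ with stability is immediate from the characterization of stable pairs recalled in Section 3, so no separate argument is needed there.
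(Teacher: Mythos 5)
Your overall framework (barriers plus Proposition \ref{KWE} on the compact surface $\Si$) matches the paper's secondary argument, but the mechanism you propose for handling the degenerate sets does not work, and that is precisely where the content of the proof lies. The fatal point is the gluing step. The min-gluing principle requires matching conditions on the overlap: near the boundary of the inner chart the local piece must lie \emph{below} the outer piece, and near the boundary of the outer chart it must lie \emph{above} it, so that the minimum transitions continuously. A local supersolution near a zero $p$ of $\al$ that ``tends to $+\infty$'' (e.g.\ $A - c\log r$ with $c>n$) is monotone increasing toward $p$, so these two conditions are contradictory: on the overlap annulus the minimum with the outside constant $C_+$ is simply $C_+$, and the glued function equals the constant in a neighborhood of $p$ --- where the constant is \emph{not} a supersolution whenever $K<0$ there, since at $p$ one has $N(C_+)=K(p)-|\be(p)|^2e^{-2C_+}$. (Trying instead a bounded superharmonic bump like $C_+-\kappa r^2$ fails the matching conditions for the same monotonicity reason.) There is no purely local fix; what is needed is a \emph{global} correction. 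The paper's proof supplies exactly this: it first modifies the background metric $h_0$ so that the curvature $K$ is \emph{positive} near the zeroes of $\al$, and then uses non-constant barriers obtained by solving linear Poisson equations, namely $u^-=w^--A$ with $\Delta w^-=K-\overline{K}$ (which replaces the pointwise curvature by its average $\overline{K}<0$, valid since $\deg L>0$), and $u^+=w^++A$ with $\Delta w^+=|\al|^2-B$.

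Your subsolution is also broken, and more seriously than you indicate. The obstruction is not the zero set of $\be$ per se but the sign of $K$: at a point where $\be=0$ and $K\geq 0$, a constant $C_-$ gives $N(C_-)=K+|\al|^2e^{2C_-}>0$ no matter how negative $C_-$ is, and in the important case $\be\equiv 0$ (e.g.\ the Fuchsian point of the Hitchin component) there is no ``local gluing device'' available at all --- the inequality must be defeated globally, which is exactly what $\overline{K}<0$ accomplishes in the paper's construction. Two further remarks: the paper's primary, complete proof is simply to observe that \eqref{102} is the reduction of the Hitchin equation to $\Si$ and to cite Hitchin's existence theorem, a route you do not mention; and even the paper's barrier argument is only carried out for $\deg L>0$ (the case $\deg L=0$, where $\overline{K}$ may vanish, is explicitly omitted as requiring more work), so a proposal claiming full generality by constants-plus-gluing is doubly too optimistic.
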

\proof 
Since this is an equation on $\Si$ rather than $\Si \ti \RP$, this follows immediately from the existence of solutions to the 
Hitchin equations \cite{hitchin1987self}.  However, we give another proof, at least when $\deg L>0$, using the barrier method. 
A proof in the same style when $\deg L=0$ requires more work so we omit it. 

Solve $\Delta w^- = K - \overline{K}$, where $\overline{K} < 0$ is the average of $K$, and set $u^-=w^- - A$ for some constant $A$.  
Then $K-\Delta u^-+ |\al|^2e^{2u^-}-
|\be|^2e^{-2u^-} \leq \overline{K} +|\al|^2e^{w^--A}$, which is negative when $A$ is sufficiently large.  Thus $u^-$ is a subsolution. 

To obtain a supersolution, first modify the background metric $h_0$ by multiplying it by a suitable positive factor so that its curvature $K$ is 
positive near the zeroes of $\alpha$. Next solve $\Delta w^+ =|\al|^2- B$ where $B$ is the average of $|\al|^2$ and set $u^+= w^+ +A$. Then 
\begin{multline*}
K-\Delta u^++|\al|^2e^{2u^+}-|\be|^2e^{-2u^{+}} = K + B+|\al|^2 (e^{2(w^+ + A)}-1)-|\be|^2e^{-2(w^+ + A)} \\ \geq 
K + B + 2|\alpha|^2(w^+ + A) - |\beta|^2 e^{-2(w^+ + A)}.
\end{multline*}
Away from the zeroes of $\alpha$ this is certainly positive if we choose $A$ sufficiently large. Near these zeroes we obtain
positivity using that $K + B > 0$ there and since the final term can be made arbitrarily small.
Thus $u^+$ is a supersolution.   

Noting that $u^- < u^+$ and applying Proposition~\ref{KWE}, we obtain a solution of \eqref{102}.
\qed

Observe that since it is only the boundary condition, but not the equation, which depends on $y$, this limiting solution
is actually a solution of \eqref{scaleEBE} on any semi-infinite region $\Si \ti [y_0, \infty)$, $y_0 > 0$. 
\end{subsection}

\begin{subsection}{Approximate solutions and regularity near $y=0$}
As a complement to the result in the previous subsection, we now construct an approximate solution $u_0$ to \eqref{scaleEBE} near $\{y=0\}$. 
Unlike there, however, we do not find an exact solution, but rather show how to build an initial approximate solution and then incrementally
correct it so that it solves \eqref{scaleEBE} to all orders as $y \to 0$.   In the next subsection we use $u_0$ and $u_\infty$ together to construct
global barriers.

We first begin with the simpler situation where there is only a Nahm pole singularity without knots.
\begin{proposition}
Let $L = K^{1/2}$ and $\alpha \equiv 1$.  Then there exists a function $u_0$ which is polyhomogeneous as $y \to 0$ and is such that $N(u_0) = f$
decays faster than $y^\ell$ for any $\ell \geq 0$.
\end{proposition}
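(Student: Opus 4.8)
The plan is to seek $u_0$ in the form $u_0 = -\log y + v$, where $v$ is polyhomogeneous with $v \to 0$ as $y \to 0$, and to determine $v$ order by order so that $N(u_0)$ vanishes to infinite order at $y=0$. First I would substitute this ansatz into \eqref{scaleEBE}. Using $\alpha \equiv 1$, a direct computation gives
\[
N(-\log y + v) = K_{h_0} - \Delta_{g_0} v - \pa_y^2 v + \frac{1}{y^2}\left(e^{2v} - 1\right) - |\be|^2 y^2 e^{-2v}.
\]
The key observation is that the singular $y^{-2}$ term produced by $-\pa_y^2(-\log y)$ cancels exactly against the leading part of $e^{2u} = y^{-2} e^{2v}$, so that $N(-\log y) = K_{h_0} - |\be|^2 y^2 = \MO(1)$. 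In other words, the Nahm pole already solves the singular part of the equation, and we are left only with a bounded error to correct.

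Second, I would identify the indicial operator governing the corrections. Linearizing the expression above at $v = 0$ and retaining the dominant behaviour as $y \to 0$ yields $I(v) = -\pa_y^2 v + 2 y^{-2} v$, for which $I(y^s) = -(s-2)(s+1)y^{s-2}$; the indicial roots are thus $s = 2$ and $s = -1$. The root $s = -1$ corresponds to the more singular term $y^{-1}$ and is excluded by the requirement $v \to 0$, so every correction is built from the root $s = 2$. A useful auxiliary computation is $I(y^2 \log y) = -3$, which shows that an inhomogeneity of order $y^0$ can be removed only at the cost of introducing a $y^2 \log y$ term. This is precisely the mechanism by which logarithms, and hence genuine polyhomogeneity rather than smoothness, enter the expansion.

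Third, I would run the formal iteration. Starting from $N(-\log y) = K_{h_0} + \MO(y^2)$, I solve $I(v_1) = -K_{h_0}$, giving $v_1 = \tfrac13 K_{h_0}\, y^2 \log y$ and lowering the error to order $y^2$. Inductively, given an approximate solution $u^{(k)}$ with $N(u^{(k)})$ polyhomogeneous of order $y^{\gamma_k}$, $\gamma_k \to \infty$, I solve $I(w) = -(\text{leading term of } N(u^{(k)}))$ using $I(y^s) = -(s-2)(s+1)y^{s-2}$, inserting a factor of $\log y$ exactly when the forcing meets the indicial root $s=2$. Because the nonlinear terms $e^{\pm 2v}$ are entire and each correction raises the order by at least $2$, the scheme is triangular: every correction feeds only into strictly higher orders, so it closes at each finite step and produces a formal series $v \sim \sum a_{j\ell}(z)\, y^{\gamma_j}(\log y)^\ell$ with only finitely many log powers at each order. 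The terms $\Delta_{g_0} K_{h_0}(z)$ and $\Delta_{g_0}|\be(z)|^2$ generated by the surface Laplacian enter only at the same or higher orders and do not disturb this structure.

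Finally, I would invoke a standard asymptotic summation (Borel) lemma to produce an actual polyhomogeneous function $u_0$ whose expansion agrees with the formal series; then $f := N(u_0)$ is polyhomogeneous and, by construction, $f = \MO(y^\ell)$ for every $\ell$. The main obstacle throughout is the careful bookkeeping of indicial roots: one must confirm that the forbidden root $s=-1$ never arises in any forcing term (guaranteed here, since the initial error is $\MO(1)$ and all corrections increase the order by at least $2$, so the inversion order $s$ always satisfies $s \geq 2$), and one must control the accumulation of logarithmic powers so that the resulting index set has real parts tending to infinity with finitely many logs per exponent.
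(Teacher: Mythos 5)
Your proposal is correct and takes essentially the same route as the paper's proof: the ansatz $u_0 = -\log y + v$ with $v$ polyhomogeneous, isolation of the indicial operator $-\pa_y^2 + 2y^{-2}$ (roots $s=2$ and $s=-1$), term-by-term formal solution in which the $y^2\log y$ term absorbs the $O(1)$ forcing, and Borel summation at the end. One minor remark: your leading coefficient $\tfrac13 K_{h_0}$ for the $y^2\log y$ term is the one forced by the order-$y^0$ equation, since the $-|\beta|^2 y^2 e^{-2v}$ term only enters the forcing at order $y^2$; the paper instead records $a_{21}=\tfrac13(K_{h_0}-|\beta|^2)$, but this discrepancy is immaterial to the structure and conclusion of the argument.
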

\proof  We seek $u_0$ with a polyhomogeneous expansion of the form 
\[
-\log y + \sum_{j, \ell} a_{j \ell}(z) y^j (\log y)^\ell := -\log y + v,
\]
where all the coefficients are smooth in $z$, and where the number of $\log y$ factors is finite for each $j$.  Rewriting $N(-\log y + v)$ as 
\begin{equation}
\left(-\del_y^2 + \frac{2}{y^2}\right) v + \frac{1}{y^2}(e^{2v} - 2v-1) - |\beta|^2 y^2 e^{-2v} - \Delta_{g_0} v + K_{h_0},
\label{modpsi}
\end{equation}
and inserting the putative expansion for $v$ shows that $a_{0\ell} = a_{1\ell} = 0$ for all $\ell$ and $a_{21} = \tfrac13( K_{h_0} - |\beta|^2$, $a_{2\ell} = 0$
for $\ell > 1$, i.e.,  $v \sim a_{21} y^2 \log y + a_{20}y^2 + \MO(y^3(\log y)^\ell)$ for some $\ell$. Inductively we can solve for each of the coefficients
$a_{j\ell}$ with $j > 2$ using that 
\begin{multline*}
(-\del_y^2 + 2/y^2) y^j (\log y)^\ell \\
=  y^{j-2} (\log y)^{\ell-2} \left((-j(j-1)+2) (\log y)^2 - \ell(2j-1) \log y- \ell(\ell-1) \right).
\end{multline*}
Note that the coefficient $a_{20}$ is not formally determined in this process and different choices will lead to different formal expansions, and also that
there are increasingly high powers of $\log y$ higher up in the expansion. 

Now use Borel summation to choose a polyhomogeneous function $u_0$ with this expansion.  This has a Nahm pole at $y=0$ and
satisfies $N(u_0) = f = \MO(y^\ell)$ for all $\ell$, as desired. 
\qed

\medskip

We next turn to the construction of a similar approximate solution to all orders in the presence of knot singularities. To carry this out,
we first review a geometric construction from \cite{MazzeoWitten2017} which is at the heart of the regularity theorem quoted
in \S 3.4 for the full \EBE and the analogous result for \eqref{scaleEBE} which we describe below.   

If $p \in \Si$, we define the blowup of $\Si \ti \RP$ at $(p,0)$ to consist of the disjoint union $(\Si \times \RP) \setminus \{(p,0)\}$ and
the hemisphere $S^2_+$, which we regard as the set of inward-pointing unit normal vectors at $(p,0)$, and denote by $[ \Si \ti \RP ; \{(p,0)\}]$,
or more simply, just $(\Si\ti\RP)_p$ There is a blowdown map which is the identity away from $(p,0)$ and maps the entire hemisphere to this 
point. This set is endowed with the unique minimal topology and differential structure so that the lifts of smooth functions on
$\Si \ti \RP$ and polar coordinates around $(p,0)$ are smooth.  We use spherical coordinates $(R, \psi, \theta)$ around this point,
so $R=0$ is the hemisphere and $\psi = 0$ defines the original boundary $y=0$ away from $R=0$.    This is a smooth manifold
with corners of codimension two. 

Now fix a nonzero element $\alpha \in H^0(L^{-2}K)$ and denote its divisor by $\sum_{j=1}^N n_jp_j$.  For each $j$, choose a small ball $\hB_j$ 
and a local holomorphic coordinate $z$ so that $p_j=\{z=0\}$, and write $|\al|^2=  \sigma_j^2r^{2n_j}$ there, with $r = |z|$ and $\sigma_j>0$.  
Extend $r$ from the union of these balls to a smooth positive function on $\Si \setminus \{p_1, \ldots, p_N\}$.   By the existence of isothermal
coordinates, we write $g_0 = e^{2\phi}\bar{g}_0$ where $\bar{g}_0$ is flat on each $\hB_j$, and set $g = g_0 + dy^2$, $\bar{g} = \bar{g}_0 + dy^2$. 
Then $\Delta_{g_0} = e^{-2\phi} \Delta_{\bar{g}_0}$ in these balls, and by dilating $\bar{g}_0$, we can assume that $e^{-2\phi} = 1$ at each $p_j$. 
We denote by $(\Si \ti \RP)_{p_1, \ldots, p_N}$ the blowup of $\Si \ti \RP$ at the collection of points $\{p_1, \ldots, p_N\}$.

\begin{proposition}
With all notation as above, there exists a function $u_0$ which is polyhomogeneous on $(\Si \ti \RP)_{p_1, \ldots, p_N}$ and which satisfies
$N(u_0) = f$ with $f$ smooth and vanishing to all orders as $y \to 0$ (i.e., at all  boundary components of the blowup.
\end{proposition}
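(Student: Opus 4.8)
The plan is to mirror the construction in the Nahm pole case just treated: first assemble a crude approximate solution by gluing the explicit model solutions, then correct it order by order using the indicial analysis of the linearized operator on the blown-up space, and finally Borel sum the resulting formal series to produce a genuine polyhomogeneous function. The essential new feature is that the correction must now be carried out on a manifold with corners, so the formal solution must be of product type near the corner $R=\psi=0$.

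First I would take as initial approximation the patched function $u_0^{(0)}=\sum_{j}\chi_j U_{n_j}$ already used in Proposition~\ref{metricaprior}, where $U_{n_0}=-\log y$ away from the marked points and $U_{n_j}$ is the model knot solution \eqref{modelsoln} near $p_j$. I would then examine $f_0:=N(u_0^{(0)})$ on $(\Si\ti\RP)_{p_1,\ldots,p_N}$. Each model satisfies its own model equation \eqref{Wittenknotmodel}, so the only contributions to $f_0$ are: the term $-|\be|^2e^{-2u}$, which near $p_j$ is $\MO(R^{2n_j+2}(\sin\psi)^2)$ and hence vanishes to high order at the front face; the curvature term $K_{h_0}$; the discrepancy between the flat model metric and $g_0$, which is $\MO(R^2)$ because $e^{-2\phi}=1$ at each $p_j$, together with the normalizing constant $\sigma_j$; and the commutator terms from the cutoffs $\chi_j$, which are supported in the transition annuli where $U_{n_j}\sim-\log y$, so there the two models agree to high order in $y$. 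Consequently $f_0$ is polyhomogeneous and vanishes to a positive order at every boundary hypersurface.

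Next I would set up the iteration. Writing $u=u_0^{(k)}+v$ gives $N(u_0^{(k)}+v)=N(u_0^{(k)})+\calL v+Q(v)$, where $\calL=-(\Delta_{g_0}+\pa_y^2)+2|\al|^2e^{2u_0^{(k)}}+2|\be|^2e^{-2u_0^{(k)}}$ and $Q$ is quadratically small. Near the knot the leading part of $\calL$ is the linearization of the model equation about $U_n$; in spherical coordinates one computes $2r^{2n}e^{2U_n}=R^{-2}V(\psi)$ with $V(\psi)=2(n+1)^2(\cos\psi)^{2n}/((\sin\psi)^2 S_n(\psi)^2)$, so that $R^2\calL=(R\pa_R)^2+(R\pa_R)+\Delta_{S^2_+}+V(\psi)$ is an incomplete iterated edge operator of exactly the type treated in \cite{MazzeoWitten2017}. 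Its indicial roots at the front face $R=0$ are determined by the eigenvalues of the hemisphere operator $\Delta_{S^2_+}+V(\psi)$ with the edge behavior imposed at $\psi=0$, while at the edge face $\psi=0$ one has $V(\psi)\sim 2/\psi^2$, recovering precisely the Nahm pole indicial operator with relevant root $2$ as in \eqref{modpsi}. Given the leading polyhomogeneous term $f_{\mathrm{lead}}$ of the current error $f$, I would solve $\calL v=-f_{\mathrm{lead}}$ for a polyhomogeneous $v$ whose exponent exceeds that of $f_{\mathrm{lead}}$, introducing the requisite powers of $\log R$ or $\log y$ whenever the target exponent is itself an indicial root. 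Replacing $u_0^{(k)}$ by $u_0^{(k)}+v$ then cancels the leading term of $f$ and pushes the error to strictly higher order at the relevant face.

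The hard part will be the corner analysis: one must verify that the edge and front-face indicial problems are simultaneously solvable and compatible, so that the corrections assemble into a genuine product-type polyhomogeneous expansion rather than merely asymptotic at each face separately. Here the indicial root $0$ at the front face would obstruct decay, but as noted in Remark~\ref{sl2remark} and in the discussion following Proposition~\ref{Knotsingularityexpansion}, the $\SLR$ structure forces $\vp$ to be off-diagonal and thereby excludes this root; the remaining potentially resonant roots are absorbed into logarithmic terms, consistently with polyhomogeneity. Once the formal expansion is in hand, Borel summation on $(\Si\ti\RP)_{p_1,\ldots,p_N}$ yields a polyhomogeneous $u_0$ with $N(u_0)=f$ smooth and vanishing to all orders at every boundary component, as claimed.
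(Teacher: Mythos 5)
Your overall strategy is the same as the paper's (patch the model solutions, correct the error iteratively at each boundary face of the blowup, Borel sum), and your formula for the cross-sectional potential, with $V(\psi)\sim 2/\psi^2$ and indicial root $2$ at $\psi=0$, is correct. But two of your steps contain genuine gaps. The first concerns the initial gluing: the unshifted patching $u_0^{(0)}=\sum_j\chi_j U_{n_j}$ (which you borrow from Proposition~\ref{metricaprior}, where it serves a different, much cruder purpose) does not have the error behavior you claim. Equation \eqref{scaleEBE} carries the coefficient $|\al|^2$, while $U_{n_j}$ solves the model equation with coefficient exactly $r^{2n_j}$; writing $|\al|^2=\sigma_j^2r^{2n_j}$, your error contains $(\sigma_j^2-1)\,r^{2n_j}e^{2U_{n_j}}$, which grows like $R^{-2}$ at the front face, and in the bulk region it contains $(|\al|^2-1)y^{-2}$, which blows up at $y=0$; moreover, in the transition annuli $U_{n_j}$ and $-\log y$ differ by $-n_j\log R+\log\bigl((n_j+1)/S_{n_j}(\psi)\bigr)$, which does not decay. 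This is exactly why the paper glues the shifted functions $G_j=U_{n_j}-\log\sigma_j$ and $G_0=-\log y-\log|\al|$; and even with that correction the resulting $f_0$ has leading order $R^{-1}$ at the front faces rather than vanishing there, contrary to your claim that $f_0$ vanishes to positive order at every boundary hypersurface.

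The second gap is the one that would make the argument fail as written. Solving away an error term $f_{j\ell}(\psi,\theta)R^{\gamma_j-2}(\log R)^{\ell}$ at a front face is not a formal indicial-root matching: it requires solving the global degenerate elliptic problem $(J-\mu)w=f_{j\ell}$ on the hemisphere $S^2_+$, where $J=-\Delta_{S^2_+}+T(\psi)$. The paper's essential analytic input, which your proposal never supplies or invokes, is that $J$ has discrete spectrum with strictly positive eigenvalues (because $T\sim\psi^{-2}$ forces its domain to embed compactly into $L^2(S^2_+)$), together with the regularity statement that solutions are polyhomogeneous and vanish like $\psi^2$ at $\psi=0$; this is what yields the paper's Lemma producing $u$ with $L_n u=f+h$, $h$ trivial at $R=0$. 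Relatedly, the corner compatibility that you yourself flag as ``the hard part'' is left unresolved in your write-up, whereas the paper resolves it by a definite ordering: one must solve first at the front faces, where the problem is global on each hemisphere and its solutions spread out to $\psi=0$; only afterwards does one solve at the original boundary, where the problem is a family of ODEs in $y$ parametrized by $z$, local and uniform up to the corner, so that these later corrections do not spread back to $R=0$. Without the hemisphere solvability and this ordering, your iteration does not close into a product-type polyhomogeneous expansion. (Your appeal to excluding the indicial root $0$ via the $SL(2,\RR)$ structure is also misplaced for the scalar equation: since $T>0$, all eigenvalues $\lambda_i$ are positive, so the admissible roots $\delta_i^+$ are automatically positive and no such exclusion is needed.)
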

\proof
In a manner analogous to the previous proposition, we construct a polyhomogeneous series expansion for $u_0$ term-by-term, but now at each of
the boundary faces of $(\Si \ti \RP)_{p_1, \ldots, p_N}$.  

The initial term of this expansion involves the model solutions $U_n$. Choose nonintersecting balls $\hB_j$ with $B_j\subset\subset \hB_j$ and an 
open set $\hB_0 \subset \Sigma\setminus \cup_{j=1}^N \overline{B}_j$ so that $\cup_{j=0}^N \hB_j=\Sigma$. Let $\{\chi_j\}$  be a partition of unity 
subordinate to the cover $\{\hB_j\}$ with $\chi_j = 1$ on $B_j$, $j \geq 1$. We lift each of these functions from $\Si$ to the blowup of $\Si \ti \RP$. 
Finally, set $G_{j}:=U_{n_j}-\log \sigma_j$, where $G_0:=U_0 -\log |\al|=-\log y-\log|\al|$. Now define 
\begin{equation}
\hu_0:=\sum_{j=0}^N \chi_j G_{j}. 
\end{equation}
We compute that $N( \hu_0) = f_0$, where $f_0$ is polyhomogeneous and is bounded at the original boundary $\psi = 0$ and has
leading term of order $R^{-1}$ at each of the `front' faces where $R=0$. 

Our goal is to iteratively solve away all of the terms in the polyhomogeneous expansion of $f_0$. This must be done separately
at the two types of boundary faces.  It turns out to be necessary to first solve away the series at $R=0$ and after that the
series at $\psi=0$. The reason for doing things in this order is that, as we now explain, the iterative problem that must
be solved at the $R=0$ front faces is global on each hemisphere, and the solution `spread' to the boundary of this hemisphere,
i.e., where $\psi = 0$. By contrast, the iterative problem at the original boundary is completely local in the $y$ directions
and may be done uniformly up to the corner where $R = \psi = 0$, so its solutions do not spread back to the front faces.

For simplicity, we assume that there is only one front face, and we begin by considering the model case $(\CC \ti \RP)_0$, 
on which the linearization of \eqref{scaleEBE} at $U_n$ can be written
\begin{equation}
L_n =  -\del_R^2 - \frac{2}{R}\del_R - \frac{1}{R^2}\Delta_{S^2_+}  +  2r^{2n} e^{2U_n} = -\del_R^2 - \frac{2}{R}\del_R + \frac{1}{R^2}
\left( -\Delta_{S^2_+}  +  T(\psi)\right),
\label{Lnmodel}
\end{equation}
where the potential equals
\[
T(\psi) = \frac{(n+1)^2}{\sin^2 \psi S_n(\psi)^2}.
\]
In general terms, $L_n$ is a relatively simple example of an `incomplete iterated edge operator', as explained in more detail in 
\cite{MazzeoWitten2017}, based on the earlier development of this class in \cite{ALMP1,ALMP2}. We need relatively little of 
this theory here and quote 
from \cite{MazzeoWitten2017} as needed.   In the present situation, we can regard $L_n$ as a conic operator over the cross-section $S^2_+$.  (It
is the fact that this link of the cone itself has a boundary which makes $L_n$ an `iterated' edge operator.) 

The crucial fact is that the operator 
\[
J = -\Delta_{S^2_+}  +  T(\psi),
\]
induced on this conic link has discrete spectrum. The proof of this is based on the observation that $T(\psi) \sim 1/\psi^2$ as $\psi \to 0$.
It can then be shown using standard arguments, cf.\ \cite{ALMP1,ALMP2}, that the domain of $J$ as an unbounded operator on $L^2(S^2_+)$ is 
compactly contained in $L^2$. This implies the discreteness of the spectrum.  Another proof which provides more accurate information uses 
that $J$ is itself an incomplete uniformly degenerate operator, as analyzed thoroughly in \cite{Mazzeo1991}. The main theorem in that paper produces 
a particular degenerate pseudodifferential operator $G$ which invers $J$ on $L^2$. It is also shown there that $G: L^2(S^2_+) \to \psi^2 H^2_0(S^2_+)$ 
(where $H^2_0$ is the scale-invariant Sobolev space associated to the vector fields $\psi \del_\psi$, $\psi \del_\theta$).  The compactness 
of $\psi^2 H^2_0(S^2_+) \hookrightarrow L^2(S^2_+)$ follows from the $L^2$ Arzela-Ascoli theorem.  There is an accompanying regularity 
theorem:  if $(J - \lambda)w = f$ where (for simplicity) $f$ is smooth and vanishes to all orders at $\psi = 0$ and $\lambda \in \RR$ (or
more generally can be any bounded polyhomogeneous function), then $w$ is polyhomogeneous with an expansion of the form
\[
w \sim  \sum w_{j\ell}(\theta) \psi^{\gamma_j}(\log \psi)^\ell, \ \ w_{j\ell} \in \mathcal C^\infty(S^1).
\]
As usual, there are only finitely many log terms for each exponent $\gamma_j$.  These exponents are the indicial roots of the operator $J$,
and a short calculation shows that these satisfy $2 = \gamma_0 < \gamma_1 < \ldots$.   Note that the lowest indiical root equals $2$,
so solutions all vanish to at least order $2$ at $\psi = 0$, which is in accord with our knowledge about the behavior of solutions
to the linearization of \eqref{scaleEBE} at the model Nahm pole solution $-\log y$. 

Denote the eigenfunctions and eigenvalues of $J$ by $\mu_i(\psi, \theta)$ and $\lambda_i$.  Since $T(\psi) > 0$, each $\lambda_i > 0$. 
The restriction of $L_n$ to the $i^{\mathrm{th}}$ eigenspace is now an ODE $L_{n,i} = -\del_R^2 - 2R^{-2} \del_R + R^{-2}\lambda_i$.  Seeking
solutions of the form $R^\delta \mu_i(\psi,\theta)$ leads to the corresponding indicial roots 
\[
\delta_i^\pm = -\frac12 \pm \frac12 \sqrt{1 + 4\lambda_i},
\]
which are the only possible formal rates of growth or decay of solutions to $L_n u = 0$ as $R \to 0$.  To satisfy the generalized Nahm pole condition,
we only consider exponents greater than $-1$, i.e., the sequence $0 < \delta_1^+ < \delta_2^+ < \ldots$.   We now conclude the following
\begin{lemma}
Suppose that $f \sim \sum f_{j\ell}(\psi,\theta) R^{\gamma_j} (\log R)^\ell$ is polyhomogeneous at the face $R=0$ on $(\CC \ti \RP)_0$,
where all $f_{j\ell}$ are polyhomogeneous with nonnegative coefficients at $\psi = 0$ on $S^2_+$. Then there exists a polyhomogeneous 
function $u$ such that $L_n u = f + h$, where $h$ is polyhomogeneous at $\psi = 0$ and vanishes to all orders as $R \to 0$. 
At  $R \to 0$, $u \sim \sum u_{j\ell} R^{\gamma_j'} (\log R)^\ell$; the exponents $\gamma_j'$ are all of the form $\gamma_i + 2$,f 
where $\gamma_j$ appears in the list of exponents in the expansion for $f$, or else $\delta_i^+ + \ell$, $\ell \in \mathbb N$. Each 
coefficient function $u_{j\ell}$, as well as the entire solution $u$ itself and the error term $h$, vanish like $\psi^2$ at the boundary $\psi = 0$. 
\end{lemma}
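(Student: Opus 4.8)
The plan is to read the statement as a formal polyhomogeneous solvability result for the conic operator $L_n$ and then pass from the formal series to an honest function by Borel summation, in direct parallel with the Nahm-pole construction of the previous proposition but with the flat cross-sectional Laplacian replaced by the spectrally discrete operator $J = -\Delta_{S^2_+} + T(\psi)$. The computational backbone is the scaling identity
\[
L_n\bigl(w(\psi,\theta)\,R^\sigma\bigr) = R^{\sigma-2}\bigl(J - \sigma(\sigma+1)\bigr)w,
\]
which follows at once from the form $L_n = -\del_R^2 - \tfrac{2}{R}\del_R + \tfrac{1}{R^2}J$ in \eqref{Lnmodel} together with $-\sigma(\sigma-1)-2\sigma = -\sigma(\sigma+1)$. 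Because the indicial roots satisfy $\delta_i^{\pm}(\delta_i^{\pm}+1)=\lambda_i$, the operator $J-\sigma(\sigma+1)$ on $L^2(S^2_+)$ is invertible exactly when $\sigma\neq\delta_i^{+}$ for all $i$ (the negative roots $\delta_i^-$ being irrelevant once we restrict to exponents greater than $-1$). This identity is precisely what dictates the exponent shift $\gamma_j\mapsto\gamma_j+2$ asserted in the lemma.

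First I would posit an expansion $u \sim \sum u_{j\ell}(\psi,\theta)\,R^{\sigma_j}(\log R)^\ell$, insert it into $L_n u = f$, and match coefficients of each $R^{\gamma_j}(\log R)^\ell$. At the top logarithmic order for a given power of $R$ one gets the clean equation $(J-\sigma_j(\sigma_j+1))u_{j,\mathrm{top}} = f_{j,\mathrm{top}}$ with $\sigma_j=\gamma_j+2$; the lower log orders pick up additional forcing from the $R$-derivatives acting on higher powers of $\log R$, and are solved from the top down. In the generic case I would invert using the degenerate pseudodifferential parametrix $G$ for $J$ from \cite{Mazzeo1991}; its mapping property $G\colon L^2(S^2_+)\to\psi^2 H^2_0(S^2_+)$, quoted in \S 3.4, is exactly what produces the claimed $\psi^2$ vanishing of each $u_{j\ell}$, irrespective of the $\psi$-behavior of the forcing, since the only indicial root of $J$ at $\psi=0$ lying in the domain is $2$. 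At a resonance $\sigma_j=\delta_i^+$ the operator $J-\lambda_i$ is not invertible, and I would raise the power of $\log R$ by one: acting with $-\del_R^2-\tfrac2R\del_R$ on $R^{\delta_i^+}(\log R)^{\ell+1}\mu_i$ produces a cross term $-(\ell+1)(2\delta_i^++1)R^{\delta_i^+-2}(\log R)^\ell\mu_i$ with $2\delta_i^++1=\sqrt{1+4\lambda_i}\neq 0$, and choosing its coefficient to cancel the projection of the obstruction onto $\mu_i$ is exactly how the Fredholm condition is met. This is the mechanism generating the family of exponents $\delta_i^+ + \ell$ and the extra logarithms, and it keeps the number of log factors finite at each fixed power of $R$.

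Having solved the recursion to all orders in $R$, I would Borel sum to obtain a genuine polyhomogeneous $u$ realizing the formal series, so that $L_n u - f$ vanishes to all orders as $R\to 0$; setting $h := f - L_n u$ produces the remainder. The reason only the $R=0$ series is solved away here, leaving $h$ polyhomogeneous but generally nonzero at $\psi=0$, is the ``spreading'' phenomenon noted in the text: the resolvent of $J$ is global on the hemisphere and its output reaches the boundary circle $\psi=0$, so the residual discrepancy there is deferred to the separate, purely local $\psi=0$ step that follows. Both $u$ and $h$ inherit $\psi^2$ vanishing from the mapping property of $G$, since every coefficient is built by applying $G$ (or its resonant modification) and the original forcing contributions cancel to all orders in $R$.

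The hard part will be the bookkeeping at the resonant exponents: confirming that raising the log power always supplies a nonzero multiple of $\mu_i$ to absorb the kernel component of the accumulated forcing, and simultaneously checking that the index set closes up in the asserted form (the shifts $\gamma_i+2$ together with the homogeneous and resonant family $\delta_i^+ + \ell$) with only finitely many logarithms per order. The hypothesis that the $f_{j\ell}$ have nonnegative coefficients at $\psi=0$ is not needed for solvability or for the $\psi^2$ regularity; I would simply carry it through the construction so that the resulting approximate solution retains the sign needed for the global barrier argument in the next subsection.
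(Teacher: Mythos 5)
Your proposal is correct and follows essentially the same route as the paper: the paper presents this lemma as the conclusion of exactly the discussion you reconstruct --- the conic form $L_n=-\del_R^2-\tfrac{2}{R}\del_R+\tfrac{1}{R^2}J$, discreteness of the spectrum of $J$, the indicial roots $\delta_i^\pm$ obtained from $\sigma(\sigma+1)=\lambda_i$, term-by-term matching with logarithmic corrections at resonances, Borel summation, and the $\psi^2$ vanishing forced by the indicial structure of $J$ at $\psi=0$ (via the parametrix of \cite{Mazzeo1991}). Your resonance computation, in particular the nonvanishing cross term $-(\ell+1)(2\delta_i^++1)$ with $2\delta_i^++1=\sqrt{1+4\lambda_i}$, is exactly right and is the mechanism the paper relies on.

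One caveat: your closing remark that the hypothesis on the $f_{j\ell}$ at $\psi=0$ is ``not needed for solvability or for the $\psi^2$ regularity'' is not correct under the natural reading of that hypothesis (nonnegative exponents, i.e., boundedness of the forcing coefficients at $\psi=0$, which is what the application to $f_0=N(\hu_0)$ provides). The $\psi$-expansion of each coefficient $u_{j\ell}$ solving $(J-\sigma(\sigma+1))u_{j\ell}=f_{j\ell}+\cdots$ contains not only the indicial exponents of $J$ (which start at $2$) but also exponents of the form $(\text{exponent of } f_{j\ell})+2$; since $T(\psi)\sim 2/\psi^2$ is only a potential perturbation of a nondegenerate operator near $\psi=0$, forcing with an exponent $\gamma\in(-2,0)$ would produce a particular solution of size $\psi^{\gamma+2}$, which vanishes strictly more slowly than $\psi^2$. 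So the claim ``irrespective of the $\psi$-behavior of the forcing'' fails, and the nonnegativity hypothesis is precisely what guarantees that all forcing-induced exponents are $\geq 2$, hence the last sentence of the lemma. Since you carry the hypothesis through anyway, your construction itself is unaffected; only the assessment of the hypothesis's role should be corrected.
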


Using the same result, we may clearly generate a formal solution to our semilinear elliptic equation in exactly the same way. Therefore,
using this Lemma, we may now choose a function $\hu_1$ which is polyhomogeneous on $(\Si \ti \RP)_{p_1 \ldots p_N}$ and 
such that $N(\hu_0 + \hu_1) = f_1$, where $f_1$ vanishes to all orders at $R=0$ and is polyhomogeneous and vanishes like $\psi^2$ at $\psi = 0$.
The lowest exponent in the expansion for $\hu_1$ equals $\min\{1, \delta_0^+ > 0\}$. 

The final step in our construction of an approximate solution is to carry out an analogous procedure at the original boundary $y=0$ away
from the front faces. This can be done almost exactly above. In this case, \eqref{modpsi} can be thought of as an ODE in $y$ with `coefficients' 
which are operators acting in the $z$ variables, so we are effectively just solving a family of ODE's parametrized by z. This may be done uniformly
up to the corner $R = \psi = 0$.  We omit details since they are the same as before.  We obtain after this step a final correction term $\hu_2$ 
which is polyhomogeneous and vanishes to all orders at $R=0$, and which satisfies
\[
N(\hu_0 + \hu_1 + \hu_2) = f,
\]
where $f$ vanishes to all orders at all boundaries of $(\Si \ti \RP)_{p_1 \ldots p_N}$. 

The calculations above are useful not just for calculating formal solutions to the problem, but also for understanding the regularity of
actual solutions to the nonlinear equation $N(u) = 0$ which satisfy the generalized Nahm pole boundary conditions with knots.
The new ingredient that must be added is a parametrix $G$ for the linearization of $N$ at the approximate solution $u_0$.
This operator $G$ is a degenerate pseudodifferential operator for which there is very precise information known concerning the 
pointwise behavior of the Schwartz kernel. This is explained carefully in \cite{MazzeoWitten2013} for the simple Nahm pole case
and in \cite{MazzeoWitten2017} for the corresponding problem with knot singularities. We shall appeal to that discussion and
the arguments there and simply state the
\begin{proposition} 
Let $u$ be a solution to \eqref{scaleEBE} which is of the form $u = u_0 + v$ where $v$ is bounded as $y \to 0$ (in particular
as $\psi \to 0$ and $R \to 0$).  Then $u$ is polyhomogeneous at the two boundaries $\psi = 0$ and $R=0$ of the blowup
$(\Si \ti \RP)_{p_1, \ldots, p_N}$, and its expansion is fully captured by that of $u_0$. 
\end{proposition}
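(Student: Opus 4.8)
The plan is to run a bootstrap argument for the error term $v$, using the parametrix for the linearization of $N$ at $u_0$ together with the two structural facts already in hand: that $N(u_0)$ vanishes to all orders at every boundary face of $(\Si \ti \RP)_{p_1,\ldots,p_N}$, and that the nonlinearity in $N$ is exponential, so the Taylor remainder of $N$ beyond its linear part is quadratically small in $v$. First I would write down the equation satisfied by $v$. Since $N(u)=0$ and $N(u_0)=f$ with $f$ vanishing to all orders, subtracting and Taylor-expanding the exponential terms gives
\[
\mathcal{L} v = -f - Q(v),
\]
where $\mathcal{L}=-(\Delta_{g_0}+\pa_y^2)+2|\al|^2 e^{2u_0}+2|\be|^2 e^{-2u_0}$ is the linearization of $N$ at $u_0$ and $Q(v)=\MO(v^2)$ is the quadratic Taylor remainder of the two exponential terms, carrying no constant or linear part. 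Up to the multiplicative normalization described in \S 3.4, $\mathcal{L}$ is exactly the incomplete iterated edge operator whose indicial roots were computed above: the roots $\delta_i^+>0$ at the front face $R=0$, and the roots $2=\gamma_0<\gamma_1<\cdots$ of $J$ at the original boundary $\psi=0$. The parametrix $G$ supplied by \cite{MazzeoWitten2013}, \cite{MazzeoWitten2017} inverts $\mathcal{L}$ modulo terms vanishing to all orders and maps polyhomogeneous data to polyhomogeneous output with exponents shifted by these indicial roots.

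The heart of the argument is then the iteration. Starting from the hypothesis $v=\MO(1)$, I would first observe that $\mathcal{L}$ has no indicial root in the interval $[0,\delta_1^+)$ at $R=0$, nor in $[0,2)$ at $\psi=0$ — the potential root $0$ being excluded precisely because the $SL(2,\RR)$ structure forces $\vp$ to have no diagonal terms, as recorded in Remark \ref{sl2remark}. Consequently the degenerate-elliptic regularity theorem upgrades the bounded solution of $\mathcal{L}v=-f-Q(v)$ to a polyhomogeneous one whose leading rates jump to the first positive indicial roots $R^{\delta_1^+}$ and $\psi^2$. Feeding this improved decay back into $Q(v)=\MO(v^2)$ produces a right-hand side vanishing at roughly twice the rate, and applying $G$ generates the next block of terms in the expansion. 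Iterating this, and keeping track of the product-type expansions at the corner $R=\psi=0$, builds a full polyhomogeneous expansion for $v$, hence for $u=u_0+v$.

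Finally, to see that the expansion is fully captured by that of $u_0$, I would note that the only exponents this iteration can produce are sums of the indicial roots $\{\delta_i^+\}$ and $\{\gamma_j\}$: the quadratic term $Q$ contributes only sums of exponents already present, and $f$ contributes nothing since it vanishes to all orders. This is exactly the index set from which $u_0$ was assembled in the two preceding propositions, so no new exponents appear and only finitely many logarithmic factors occur at each order. Matching the formal series of any genuine solution against that of $u_0$ term-by-term then shows they agree up to the finitely many freely determined coefficients (such as the undetermined $a_{20}$ in the Nahm pole expansion).

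The main obstacle I anticipate is the bookkeeping that makes the iteration close: one must check that the index sets generated by repeatedly combining the two families of indicial roots under addition (the effect of $Q$) and shifting (the effect of $G$) stay discrete, with real parts tending to infinity and finitely many log powers at each level, and that the two boundary faces decouple in the prescribed order — first solving at $R=0$ and only afterwards at $\psi=0$, exactly as in the construction of $u_0$. The analytic input, namely the existence and mapping properties of $G$, is quoted from \cite{MazzeoWitten2013}, \cite{MazzeoWitten2017}, so the genuinely new content here is this combinatorial control of the nonlinear iteration and the verification that the a priori gap above $0$ is available at both faces.
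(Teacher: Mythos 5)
Your overall strategy --- write $u = u_0 + v$, pass to $\mathcal{L}v = -f - Q(v)$ with $\mathcal{L}$ the linearization of $N$ at $u_0$, and bootstrap polyhomogeneity through the parametrix of \cite{MazzeoWitten2013}, \cite{MazzeoWitten2017} --- is the route the paper itself intends (the paper gives no detailed proof, only the appeal to those references), and your index-set bookkeeping at the end is the right explanation of why the expansion of $u$ is captured by that of $u_0$. But there is a genuine gap at the very first step of your iteration. Writing $Q(v)=\MO(v^2)$ hides the coefficients: the quadratic remainder is
\begin{equation*}
Q(v) \;=\; |\al|^2 e^{2u_0}\left(e^{2v}-1-2v\right)\;-\;|\be|^2 e^{-2u_0}\left(e^{-2v}-1+2v\right),
\end{equation*}
and the prefactor $|\al|^2 e^{2u_0}$ blows up like $y^{-2}$ near $\psi=0$ and is $\MO(R^{-2})$ at the front faces --- that is, it has exactly the same strength as the potential term inside $\mathcal{L}$. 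So if all you know is $v=\MO(1)$, then $-f-Q(v)$ lies only in the critical weighted space at which $\mathcal{L}$ itself lives, and the linear regularity/mapping theory applied at that weight returns nothing better than boundedness of $v$: no decay gain, no expansion. Your assertion that ``the degenerate-elliptic regularity theorem upgrades the bounded solution of $\mathcal{L}v=-f-Q(v)$ to a polyhomogeneous one with leading rates $R^{\delta_1^+}$ and $\psi^2$'' is therefore unjustified at this stage; the doubling mechanism $v=\MO(y^{a})\Rightarrow Q(v)=\MO(y^{2a-2})\Rightarrow v=\MO(y^{\min(2a,2)})$ produces a gain only when $a>0$ strictly, and the passage from $a=0$ to some $a>0$ is precisely what the linear theorem cannot deliver.

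What is missing is an initial decay estimate ($|v|\leq Cy^{\epsilon_0}$ away from the knots, with an analogous bound near the front faces, for some $\epsilon_0>0$), and this must come from the nonlinear structure rather than from $\mathcal{L}$ alone. The natural route is to use monotonicity: since $t\mapsto |\al|^2e^{2t}-|\be|^2e^{-2t}$ is increasing, the mean value theorem converts $N(u_0+v)-N(u_0)=-f$ into a linear equation $\left(-\Delta_{g_0}-\pa_y^2+c\right)v=-f$ with $c\geq 0$, and one then compares $v$ against barriers of the form $Ay^{\epsilon}+\eta y^{-1/2}$ (letting $\eta\to 0$ at the end), using that $0$ lies strictly inside the indicial gap; near the knots the barriers must be built from (perturbed) ground states on the hemisphere, and one must account for the fact that $c$ is only comparable to the model potential up to factors $e^{\pm 2\sup|v|}$, which is a real issue near $\psi=0$ where the potential blows up. Note that this is exactly where your hypotheses differ from those of \cite{MazzeoWitten2013}, \cite{MazzeoWitten2017}: there the (generalized) Nahm pole boundary condition is \emph{defined} with a built-in relative error $\MO(y^{\epsilon})$, so the iteration starts with $a=\epsilon>0$ for free, whereas the present proposition assumes only boundedness, and the missing step must be supplied. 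A separate, minor misattribution: for the scalar operator the exclusion of the indicial root $0$ follows simply from the strict positivity $\lambda_i>0$ of the eigenvalues of $J$; the remark that the $SL(2,\RR)$ structure suppresses diagonal terms of $\vp$ concerns the matrix-valued system of Proposition~\ref{Knotsingularityexpansion} and plays no role in closing the gap here.
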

\end{subsection}

\begin{subsection}{Existence of solutions}
We now come to the construction of solutions to \eqref{scaleEBE} on the entire space $\Si \ti \RP$ which satisfy the
asymptotic $\SLR$ conditions as $y \to \infty$ and which also satisfy the generalized Nahm pole boundary conditions
with knot singularities at $y=0$.   We employ the barrier method. The main ingredients in the construction of
the barrier functions are the approximate solutions $u_0$ and $u_\infty$ obtained above.

We first consider this problem in the simpler case.
\begin{proposition}
If $E = K^{1/2} \oplus K^{-1/2}$ and $\vp = \begin{pmatrix} 0 & 1 \\ \beta & 0 \end{pmatrix}$, i.e. there are no knot singularities,
then there exists a solution $u$ to \eqref{scaleEBE} which is smooth for $y > 0$, asymptotic to $u_\infty$ as $y \to \infty$,
(and which satisfies the Nahm pole boundary condition at $y=0$). 
\end{proposition}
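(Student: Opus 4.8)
The plan is to apply the barrier method of Proposition~\ref{KWE} to the operator $N$ from \eqref{scaleEBE}, here with $|\al|^2\equiv 1$, built from the two approximate solutions already in hand: the Nahm pole model $u_0$, which is polyhomogeneous as $\yrz$ and satisfies $N(u_0)=\MO(y^\ell)$ for every $\ell$, and the limiting solution $u_\infty$, which (being $y$-independent) is an \emph{exact} solution $N(u_\infty)=0$ on all of $\Si\ti\RP$. The elementary mechanism is that, since $F(u)=e^{2u}-|\be|^2e^{-2u}$ is strictly increasing, a constant shift converts an (approximate) solution into a barrier of definite sign: for $\epsilon>0$ one computes $N(w+\epsilon)=N(w)+e^{2w}(e^{2\epsilon}-1)+|\be|^2e^{-2w}(1-e^{-2\epsilon})$, whose last two terms are strictly positive, and symmetrically $N(w-\epsilon)=N(w)-(\text{positive})$.

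First I would build a single global approximate solution. Fix a cutoff $\chi(y)$ with $\chi\equiv 1$ for $y\le 1$ and $\chi\equiv 0$ for $y\ge 2$, and set $\tilde u=\chi\,u_0+(1-\chi)\,u_\infty$. Then $N(\tilde u)=N(u_0)=\MO(y^\ell)$ near $y=0$, $N(\tilde u)=N(u_\infty)=0$ for $y\ge 2$, and $N(\tilde u)$ is a bounded function supported in the compact transition region $1\le y\le 2$, on which $\tilde u$ is bounded and hence $e^{\pm 2\tilde u}$ is pinched between positive constants. Choosing $\epsilon_0>0$ so large that $(\inf e^{2\tilde u})(e^{2\epsilon_0}-1)$ exceeds $\sup_{1\le y\le 2}|N(\tilde u)|$, the shift identity shows $\tilde u+\epsilon_0$ is a global supersolution: near $y=0$ the positive term $e^{2u_0}(e^{2\epsilon_0}-1)\sim y^{-2}(e^{2\epsilon_0}-1)$ swamps $N(u_0)$, on $y\ge 2$ we are merely shifting the exact solution $u_\infty$, and the transition region is controlled by the choice of $\epsilon_0$. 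Likewise $\tilde u-\epsilon_0$ is a global subsolution, and they inherit the Nahm pole $\tilde u\sim-\log y$ up to a bounded additive error.

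To capture the behaviour at infinity I would sharpen these for large $y$. The linearization of $N$ at $u_\infty$ is a Schr\"odinger-type operator $-(\Delta_{g_0}+\pa_y^2)+V$ with $V=2e^{2u_\infty}+2|\be|^2e^{-2u_\infty}\ge 2m>0$, so for $0<\delta^2<2m$ the functions $u_\infty\pm Ce^{-\delta y}$ are super/sub\-solutions once $y$ is large (the nonlinear remainder being higher order in the small correction). Using the local patching principle recalled at the start of this section I would then set $u^+=\min\{\tilde u+\epsilon_0,\ u_\infty+Ce^{-\delta y}\}$ and $u^-=\max\{\tilde u-\epsilon_0,\ u_\infty-Ce^{-\delta y}\}$, choosing $C$ large so that the crossover falls in the range where $u_\infty\pm Ce^{-\delta y}$ is already a valid barrier. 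Since the minimum of supersolutions is a supersolution and the maximum of subsolutions a subsolution, $u^\pm$ are weak global barriers with $u^-\le u^+$; near $y=0$ they equal $\tilde u\pm\epsilon_0\sim-\log y$, while for large $y$ they equal $u_\infty\pm Ce^{-\delta y}\to u_\infty$.

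Proposition~\ref{KWE} then yields a solution $u$ with $u^-\le u\le u^+$; it is smooth for $y>0$ by interior elliptic regularity, and $u\to u_\infty$ as $\yri$ by the exponential squeeze $|u-u_\infty|\le Ce^{-\delta y}$. Finally, since $u=u_0+v$ with $v=u-u_0$ bounded as $\yrz$, the polyhomogeneity proposition established in the previous subsection upgrades this to a full polyhomogeneous expansion matching that of $u_0$, i.e. the Nahm pole boundary condition holds. I expect the main obstacle to lie at the large-$y$ end rather than the singular end: obtaining a genuinely \emph{convergent} (not merely bounded) solution forces the use of the decaying barriers $u_\infty\pm Ce^{-\delta y}$ and hence the strict positivity $V\ge 2m>0$ of the linearized potential, together with a clean patch between the constant-shift regime near $y=0$ and the exponentially decaying regime near infinity; by comparison the transition region and the Nahm pole itself are routine once $\epsilon_0$ is fixed and the regularity result is invoked.
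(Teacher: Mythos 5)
Your overall strategy (glue $u_0$ to $u_\infty$ with a cutoff, build barriers, invoke Proposition~\ref{KWE}, then upgrade to polyhomogeneity for the Nahm pole) is exactly the paper's, but your barrier construction has a genuine flaw: the global formulas $u^+=\min\{\tilde u+\epsilon_0,\,u_\infty+Ce^{-\delta y}\}$ and $u^-=\max\{\tilde u-\epsilon_0,\,u_\infty-Ce^{-\delta y}\}$ produce barriers that \emph{cross} near $y=0$. Indeed, as $y\to 0$ one has $\tilde u\pm\epsilon_0\sim-\log y\to+\infty$ while $u_\infty\pm Ce^{-\delta y}$ stays bounded, so the min selects the bounded function ($u^+\to u_\infty+C$) while the max selects the unbounded one ($u^-\sim-\log y\to+\infty$). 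Hence $u^->u^+$ in a neighborhood of $y=0$, the hypothesis $u^-\le u^+$ of Proposition~\ref{KWE} fails, and even formally a function squeezed between them could not have the Nahm pole, since the supersolution is bounded at $y=0$. Your proposed remedy of ``choosing $C$ large'' does not help: enlarging $C$ only pushes this crossover to exponentially small $y$, but can never remove it, because the crossover is forced by the unboundedness of $\tilde u$ versus the boundedness of the exponential barrier. (There is a second, more minor gap on the subsolution side: $u_\infty-Ce^{-\delta y}$ is a subsolution only where $Ce^{-\delta y}$ is below a threshold depending on $\delta^2/\inf e^{2u_\infty}$, and since $\epsilon_0$ must be large this threshold must be arranged, by shrinking $\delta$, to exceed $\epsilon_0$; this is a condition on $\delta$, not on $C$.)

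The paper avoids the crossing by taking the min and max at the level of the \emph{perturbation} relative to the common singular profile, not of the full barrier functions: writing $\wh N(v)=N(\hu+v)$, it uses $v^+=\min\{Ay^\epsilon,\,A'e^{-\epsilon y}\}$ and $v^-=-v^+$, so that $u^\pm=\hu\pm\min\{Ay^\epsilon,\,A'e^{-\epsilon y}\}$. Both barriers then carry the same Nahm pole term $\hu$, the ordering $u^-\le u^+$ holds everywhere, near $y=0$ the active correction is $Ay^\epsilon\to 0$ (keeping the pole and giving the bound needed to invoke the regularity proposition), and near $y=\infty$ it is $A'e^{-\epsilon y}$ (giving convergence to $u_\infty$). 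Your construction can be repaired in this spirit, or alternatively by applying the patching principle with the exponential barriers restricted to a region $\{y\ge Y_1\}$, $Y_1\ge 2$, chosen so that $Ce^{-\delta Y_1}\ge\epsilon_0$, which prevents them from ever being the active branch near $y=0$; but as written the argument cannot invoke Proposition~\ref{KWE}.
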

\proof
Choose a smooth nonnegative cutoff function $\tau(y)$ which equals $1$ for $y \leq 2$ and which vanishes for $\tau \geq 3$,
and define $\hu = \tau(y) u_0 + (1-\tau(y))u_\infty$. We consider the operator
\[
\wh{N}(v) = N(\hu + v) = - (\del_y^2 + \Delta_{g_0} ) v +  e^{2\hu} (e^{2v} - 1) + |\beta|^2 e^{-2\hu} (1 - e^{-2v}) + f,
\]
where $f = N(\hu)$ is smooth on $\Si \ti \overline{\mathbb R}^+$, vanishes to infinite order at $y = 0$ and vanishes identically for $y \geq 3$. 

We now find barrier functions for this equation.  Indeed, we compute that if $0 < \epsilon < 1$, then
\[
\wh{N}(A y^\epsilon) = A \epsilon(1-\epsilon) y^{\epsilon-2} + e^{2\hu}( e^{2Ay^\epsilon} - 1) + |\beta|^2 e^{-2\hu}(1 - e^{-2Ay^\epsilon}) + f.
\]
The second and third terms on the right are nonnegative because $A y^\epsilon > 0$, and we can certainly choose $A$ sufficiently
large so that the entire right hand side is positive for all $y > 0$.

We can improve this supersolution for $y$ large. Indeed, 
\[
\wh{N}( A' e^{-\epsilon y}) \geq -A' \epsilon^2 e^{-\epsilon y} + e^{2\hu}( 2A' e^{-\epsilon y}) + |\beta|^2e^{-2\hu}(1 - e^{-2A'e^{-\epsilon y}}) + f,
\]
and if $\epsilon$ is sufficiently small and $A'$ is sufficiently large, then the entire right hand side is positive, at least for $y \geq 1$, say.

We now define $v^+ = \min\{ A y^\epsilon, A' e^{-\epsilon y}\}$. The calculations above show that $v^+$ is a supersolution to the equation.
Essentially the same equations show that $v^- = \max\{ -A y^{\epsilon}, -A' e^{-\epsilon y}\}$ is a subsolution.   

We now invoke Propostion~\ref{KWE} to conclude that there exists a solution $v$  to $\wh{N}(v) = 0$, or equivalently, a solution
$u = \hu + v$ to $N(u) = 0$, which satisfies $|u + \log y| \leq A y^\epsilon$ as $y \to 0$ and $|u - u_\infty| \leq A' e^{-\epsilon y}$
as $y \to \infty$.    The regularity theorem for \eqref{scaleEBE} shows that this solution is polyhomogeneous at $y=0$, and hence
must have an expansion of the same type as $\hu$, and a similar but more standard argument can be used to produce a better
exponential rate of decay as $y \to \infty$. 
\qed
\medskip

\begin{proposition}
Let $E = L \oplus L^{-1}$ and $\vp = \begin{pmatrix} 0 & \alpha \\ \beta & 0 \end{pmatrix}$ be a stable Higgs pair, and let
$(p_j, n_j)$ be the `knot data' determined by $\alpha$.  Then there exists a solution $u$ t o \eqref{scaleEBE} of the
form $u = \hu + v$ where $v \to 0$ as $y \to 0$ and as $y \to \infty$. 
\end{proposition}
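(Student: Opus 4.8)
The plan is to follow the barrier construction of the preceding proposition, now feeding in the knot version of the approximate solution $u_0$ built on the blowup $(\Si \ti \RP)_{p_1,\dots,p_N}$ together with the limiting solution $u_\infty$ on $\Si$. Fix a cutoff $\tau(y)$ with $\tau \equiv 1$ for $y\le 2$ and $\tau\equiv 0$ for $y\ge 3$, and set $\hu = \tau u_0 + (1-\tau)u_\infty$; then $f:=N(\hu)$ vanishes to all orders at every boundary face of the blowup, since $N(u_0)$ does, and vanishes identically for $y\ge 3$, since $u_\infty$ solves \eqref{scaleEBE} there. Writing $u=\hu+v$, the equation $N(u)=0$ becomes
\[
\wh N(v) := -(\Delta_{g_0}+\pa_y^2)v + |\al|^2 e^{2\hu}(e^{2v}-1) + |\be|^2 e^{-2\hu}(1-e^{-2v}) + f = 0 ,
\]
so it suffices to exhibit continuous barriers $v^-\le v^+$ for $\wh N$ with $v^\pm\to 0$ as $y\to 0$ and as $y\to\infty$, after which Proposition \ref{KWE} produces a solution $v$ with $v^-\le v\le v^+$.

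Near $y=0$ I would take $v^+=Ay^\epsilon$ with $0<\epsilon<1$, exactly as in the Nahm pole case. The decisive point is that $y^\epsilon$ depends only on the product variable $y$, so $-(\Delta_{g_0}+\pa_y^2)(Ay^\epsilon)=A\epsilon(1-\epsilon)y^{\epsilon-2}$ identically; this quantity is positive and, blowing up like $R^{\epsilon-2}(\sin\psi)^{\epsilon-2}$, it dominates $f$ (which vanishes to all orders) uniformly up to the corner $R=\psi=0$ of the blowup. Since the two nonlinear terms are nonnegative whenever $v^+>0$, the function $Ay^\epsilon$ is in fact a supersolution on all of $\Si\ti\RP$ once $A$ is large, and likewise $-Ay^\epsilon$ is a global subsolution; both tend to $0$ as $y\to 0$, in particular at the knots.

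The main obstacle is the barrier as $y\to\infty$, and here the zeros of $\al$ change the picture. The relevant model is the linearization of $N$ at $u_\infty$, whose zeroth order coefficient is $2W$ with $W(z)=|\al|^2 e^{2u_\infty}+|\be|^2 e^{-2u_\infty}$. In the Nahm pole case $\al\equiv 1$ forces $W\ge c>0$, so a plain barrier $A'e^{-\epsilon y}$ is a supersolution for $y$ large; but with knots $\al$ vanishes at the $p_j$, and at any common zero of $\al$ and $\be$ the potential $W$ itself vanishes. At such a point $-(\Delta_{g_0}+\pa_y^2)(A'e^{-\epsilon y})=-A'\epsilon^2 e^{-\epsilon y}$ is negative with nothing to cancel it, so $A'e^{-\epsilon y}$ fails to be a supersolution. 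To repair this I would incorporate the ground state: let $\Psi>0$ be the first eigenfunction of the Schr\"odinger operator $-\Delta_{g_0}+2W$ on $\Si$, with eigenvalue $\mu_0$. Because $W\ge 0$ and $W\not\equiv 0$ (stability forces $\al\not\equiv 0$), one has $\mu_0>0$. Taking $v^+=A'e^{-\epsilon y}\Psi(z)$ with $\epsilon<\sqrt{\mu_0}$ and expanding the exponentials to first order, the leading part of $\wh N(v^+)$ for $y\ge 3$ and $v^+$ small is $(\mu_0-\epsilon^2)v^+>0$, giving a genuine decaying supersolution near infinity; $-A'e^{-\epsilon y}\Psi$ is the corresponding subsolution.

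Finally I would glue the two regimes, setting $v^+=\min\{Ay^\epsilon,\,A'e^{-\epsilon y}\Psi\}$ and $v^-=\max\{-Ay^\epsilon,\,-A'e^{-\epsilon y}\Psi\}$, which are weak barriers by the local gluing principle recalled earlier. Since $v^-<0<v^+$ the ordering is automatic, and Proposition \ref{KWE} then yields $u=\hu+v$ solving \eqref{scaleEBE} with $|v|\le Ay^\epsilon\to 0$ as $y\to 0$ and $|v|\le A'e^{-\epsilon y}\Psi\to 0$ as $y\to\infty$. The regularity proposition of the previous subsection upgrades $v$ to a polyhomogeneous function whose expansion is captured by $u_0$, confirming that $u$ satisfies the generalized Nahm pole boundary conditions with knot singularities of order $n_j$ at $p_j$. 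The delicate step throughout is the matching of the two barriers across the region of moderate $y$: where $\al$ vanishes but $\be$ does not, the restoring term $|\be|^2 e^{-2\hu}(1-e^{-2v^+})$ saturates for large $v^+$, so one must verify that the crossover between the $y^\epsilon$ barrier and the eigenfunction barrier occurs where $v^+$ is already small enough for the linearized estimate to control the nonlinearity. This is the crux of the argument and the place where the special structure of the $SL(2,\RR)$ data — in particular $\al\not\equiv 0$ and the resulting $\mu_0>0$ — is essential.
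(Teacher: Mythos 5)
Your near-boundary analysis is correct and is actually simpler than the paper's: because barriers only require one-sided inequalities and both nonlinear terms have favorable signs, the single function $A y^\epsilon$ (resp.\ $-Ay^\epsilon$) is a global super(sub)solution right up to the knot faces, whereas the paper works harder there, taking $v^+=\min\{AR^\epsilon\mu_0,\,A'y^{\epsilon/2},\,A''e^{-\epsilon y}\}$ with $\mu_0(\psi,\theta)$ the ground state of the hemisphere operator $-\Delta_{S^2_+}+\tilde T$ on the front face. Your criticism of the plain far-field barrier is also genuine: at a common zero of $\alpha$ and $\beta$ (e.g.\ $\beta\equiv 0$, which is a stable pair when $\deg L>0$), one has $\wh N(A''e^{-\epsilon y})=-A''\epsilon^2 e^{-\epsilon y}<0$, so the paper's assertion that the same barriers ``obviously work'' for $y\geq 3$ is justified only when $|\alpha|^2e^{2u_\infty}+|\beta|^2e^{-2u_\infty}$ has a positive lower bound.

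However, your replacement barrier does not close the argument, and the failure occurs exactly at the step you defer as ``the crux.'' The cancellation behind your estimate is exact only at linear order: keeping the quadratic error of the saturating term one gets
\begin{equation*}
\wh N\bigl(A'e^{-\epsilon y}\Psi\bigr)\;\geq\; v^+\Bigl[(\mu_0-\epsilon^2)-2|\beta|^2e^{-2u_\infty}\,v^+\Bigr],\qquad v^+=A'e^{-\epsilon y}\Psi ,
\end{equation*}
so (when $\beta\not\equiv 0$) this is a supersolution only where $v^+\leq\delta_0:=(\mu_0-\epsilon^2)/\bigl(2\sup_\Si|\beta|^2e^{-2u_\infty}\bigr)$, a \emph{fixed} threshold that does not improve as $\epsilon\to 0$. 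On the other hand, $A'e^{-\epsilon y}\Psi$ is not a supersolution in the transition zone $2\leq y\leq 3$: there $\hu\neq u_\infty$, so the term $-2W\Psi$ coming from $-\Delta\Psi$ is no longer cancelled by the nonlinearity, and $f\neq 0$ has no sign. Hence your minimum must equal $Ay^\epsilon$ on all of $(0,3]$, forcing the crossover to occur at some $y_*\geq 3$, where the common value is at least $3^\epsilon A\geq A$. Since the near-boundary barrier requires $A\gtrsim \sup|f|/\epsilon$ (and $f=N(\hu)$ in the gluing region is of uncontrolled size, with the requirement worsening as $\epsilon$ shrinks), the two constraints $A\leq \delta_0$ and $A\gtrsim \sup|f|/\epsilon$ are incompatible in general: the verification you postpone actually fails. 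Note the contrast with the paper's plain exponential, whose admissible amplitude in the generic (no common zero) case is of size $c/\epsilon^2$ and so can be made to exceed any fixed $A$ by shrinking $\epsilon$. A repair that does work, for your proof and for the common-zero case generally, is to localize the spectral input: around each zero $p_j$ of $\alpha$ take $A''e^{-\epsilon y}\Phi_\rho(z)$ with $\Phi_\rho$ the Dirichlet ground state of a small disk $D_\rho(p_j)$ and $\epsilon^2<\lambda_1(D_\rho)$; this is a supersolution for \emph{arbitrary} amplitude, since both nonlinear terms may simply be discarded, and it glues to the plain exponential on the set where $|\alpha|$ is bounded below, where any amplitude is likewise admissible.
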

\proof  We proceed exactly as before, writing 
\[
\wh{N}(v) = N(\hu + v) =  - (\del_y^2 + \Delta_{g_0}) v + |\alpha|^2 e^{2\hu}( e^{2v} - 1) + |\beta|^2 e^{-2\hu} (1 - e^{-2v}) + f,
\]
with $f = N(\hu)$ vanishing to all orders as $y \to 0$ and identically for $y \geq 3$. The same barrier functions obviously 
work in the region $y \geq 3$, and also in the region near $y=0$ away from the knot singularities.  

To construct barriers near a knot $(p,0)$ of weight $n$, recall the explicit structure of $\hu$ near this point and expand
the nonlinear term $e^{2v} - 1$ one step further to write in some small neighborhood of the front face created by
blowing up this point
\[
\wh{N}(v) = (-\del_R^2 - \frac{2}{R}\del_R + \frac{1}{R^2}( -\Delta_{S^2_+} + \tilde{T})) v +  k e^{2U_n}( e^{2v} - 1 - 2v) + |\beta|^2e^{-2U_n}(1 - e^{-2v}) + f.
\]
Here $k$ is a strictly positive function which contains all the higher order terms in the expansion for $\hu$, and $\tilde{T}$ is a slight
perturbation of the term $T$ appearing in the linearization $L_n$.   Let $\mu_0$ denote the ground state eigenfunction for this operator
on $S^2_+$. The corresponding eigenvalue $\lambda_0'$ is a small perturbation of $\lambda_0$, which we showed earlier was strictly greater
than $0$.  Now compute
\[
\wh{N} ( A R^\epsilon \mu_0(\psi,\theta)) = (\lambda_0' - \epsilon(\epsilon+1)) A R^{\epsilon-2} \mu_0 + f + E,
\]
where $E$ is the sum of the two terms involving $e^{\pm 2U_n}$.  As before, since $v \geq 0$ implies $e^{2v} - 1 - 2v \geq 0$ and $1 - e^{-2v}$,
we have that $E \geq 0$, and if $\epsilon$ is sufficiently small, then
this first term on the right has positive coefficient, and dominates $f$.   We have thus produced a local supersolution near $(p,0)$. 
The full supersolution is
\[
v^+ = \min \{ A R^{\epsilon}\mu_0,  A' y^{\epsilon/2},  A'' e^{-\epsilon y} \}.
\]
We have chosen to use the exponents $\epsilon$ and $\epsilon/2$ in the first two terms here in order to ensure that the first term
is smaller than the second in the interior of the front face $R=0$; indeed, $A R^\epsilon \mu_0< A' (R \sin \psi)^{\epsilon/2}$ when
$R < (A'/A)^{2/\epsilon} (\sin \psi)^{\epsilon/2}$.  This means that $v^+$ agrees with $A' y^{\epsilon/2}$ near the original boundary
and with $A R^\epsilon \mu_0$ near the other boundaries, and as before, with the exponentially decreasing term when $y$ is large. 

A very similar calculation with the same functions produces a subsolution $v^-$. Altogether, we deduce, by Proposition \ref{KWE} again, 
the existence of a solution $u = \hu + v$ to $N(u) = 0$ with the correct asymptotics.
\qed
\end{subsection}
\end{section}

\begin{section}{Uniqueness} 
In this section, we prove a uniqueness theorem for solutions of the \EBE satisfying the (generalized) Nahm pole boundary condition. This will be phrased
in terms of the associated Hermitian metrics. The key to this is the subharmonicity of the Donaldson metric, which we recall in the first subsection.

\begin{subsection}{The Distance on Hermitian metrics}
Suppose that $H$ is a Hermitian metric on a bundle $E$, with compatible data $(A,\phi,\phi_1)$, which satisfies the extended Bogomolny equations.
As we have discussed, it is possible to choose a holomorphic gauge which is parallel in the $y$ direction such that 
$\MD_1=\Bp$, $\MD_2= \mathrm{ad}\,\vp$, $\MD_3=\pa_y$. In this gauge, the Hermitian metric $H$ determines the gauge fields by
\begin{equation}
\pa^A=\pa+H^{-1}\pa H,\;\vp^{\st}=H^{-1}\vp^{\da}H,\;\pa^{\MA}_y=\pa^{A_y}+i\phi_1=\pa_y+H^{-1}\pa_yH,
\end{equation}
where of course $\pa$ is the complex differential on $\Si$ and in this trivialization $\vp^{\da} = \vp^{\da}=\bar{\vp}^{\top}$.
We can then write the \EBE as 
\[
\Bp(H^{-1}\pa H)+[\vp^{\st H},\vp]+h_0^2\pa_y(H^{-1}\pa_yH)=0,
\]
where $h_0^2|dz|^2$ is the Riemannian metric on $\Si$. 

Following \cite{donaldson1985anti}, we define the distance between Hermitian metrics
\begin{equation}
\sigma(H_1,H_2)=\Tr (H_1^{-1}H_2)+\Tr (H_2^{-1}H_1)-4,
\end{equation}
and recall from that paper two important properties:
\begin{itemize}
\item[1)] $\sigma(H_1,H_2)\geq 0$, with equality if and only if $H_1=H_2$;
\item[2)] A sequence of Hermitian metric $H_i$ converges to $H$ in the usual $\MC^0$ norm if and only if $\sup_\Si \sigma(H_i,H)\to 0$.
\end{itemize}

\begin{lemma}
Suppose that $H_1$ and $H_2$ are both harmonic metrics. Then the complex gauge transform $h:=H_1^{-1}H_2$ satisfies 
\begin{equation}
\Bp(h^{-1}\pa^{A_1}h)+\pa_y(h^{-1}\pa^{A_1}_yh)+[h^{-1}[\vp^{\st},h],\vp]=0.
\end{equation}
\end{lemma}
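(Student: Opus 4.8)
The plan is to substitute $H_2 = H_1 h$, with $h = H_1^{-1}H_2$ regarded as an endomorphism of $E$, into the extended Bogomolny equation satisfied by $H_2$ in the parallel holomorphic gauge, and then to subtract the corresponding equation for $H_1$. Everything reduces to three transformation identities. Write $A_1 = H_1^{-1}\pa H_1$ and $B_1 = H_1^{-1}\pa_y H_1$, and let $\pa^{A_1}$ and $\pa_y^{A_1}$ denote the connections they induce on $\mathrm{End}(E)$, so that $\pa^{A_1}h = \pa h + [A_1, h]$ and $\pa_y^{A_1}h = \pa_y h + [B_1, h]$; also abbreviate $\vp^{\st} = H_1^{-1}\vp^{\da}H_1$.

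First I would record, using the Leibniz rule on $H_2 = H_1 h$, that
\[
H_2^{-1}\pa H_2 = h^{-1}(H_1^{-1}\pa H_1)h + h^{-1}\pa h = H_1^{-1}\pa H_1 + h^{-1}\pa^{A_1}h,
\]
so that $H_2^{-1}\pa H_2 - H_1^{-1}\pa H_1 = h^{-1}\pa^{A_1}h$; the identical computation with $\pa$ replaced by $\pa_y$ gives $H_2^{-1}\pa_y H_2 - H_1^{-1}\pa_y H_1 = h^{-1}\pa_y^{A_1}h$. For the Higgs term, since the naive adjoint $\vp^{\da} = \bar{\vp}^{\top}$ is fixed by the trivialization and does not depend on the metric, conjugation gives $\vp^{\st H_2} = H_2^{-1}\vp^{\da}H_2 = h^{-1}\vp^{\st}h$, whence
\[
[\vp^{\st H_2}, \vp] - [\vp^{\st}, \vp] = [h^{-1}\vp^{\st}h - \vp^{\st}, \vp] = [h^{-1}[\vp^{\st}, h], \vp],
\]
using $h^{-1}\vp^{\st}h - \vp^{\st} = h^{-1}[\vp^{\st}, h]$.

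Next I would apply $\Bp$ to the first identity and $\pa_y$ to the second and subtract the extended Bogomolny equation for $H_1$ from that for $H_2$. Since $\Bp$ and $\pa_y$ act linearly and the scalar factor $h_0^2$ in front of the $y$-term is common to both equations, the subtraction is term-by-term and yields
\[
\Bp(h^{-1}\pa^{A_1}h) + h_0^2\,\pa_y(h^{-1}\pa_y^{A_1}h) + [h^{-1}[\vp^{\st}, h], \vp] = 0,
\]
which is the asserted identity (the $y$-term carrying the metric factor $h_0^2$ inherited from the \EBE in parallel gauge).

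This computation is essentially formal, so there is no serious analytic obstacle: both metrics are harmonic and hence satisfy the equation pointwise, and the lemma is an algebraic consequence. The only points needing care are the bookkeeping of commutators when passing from connections on $E$ to the induced connections on $\mathrm{End}(E)$ — it is precisely the commutator term $[A_1, h]$ that supplies the $-A_1$ in $h^{-1}\pa^{A_1}h = h^{-1}\pa h + h^{-1}A_1 h - A_1$ — and the observation that $\vp^{\st H}$ transforms by conjugation by $h$ exactly because the underlying $\vp^{\da}$ is metric-independent.
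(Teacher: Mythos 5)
Your proof is correct and follows essentially the same route as the paper's: both compute the transformation identities $H_2^{-1}\pa H_2 - H_1^{-1}\pa H_1 = h^{-1}\pa^{A_1}h$, $H_2^{-1}\pa_y H_2 - H_1^{-1}\pa_y H_1 = h^{-1}\pa_y^{A_1}h$, and $[\vp^{\st H_2},\vp]-[\vp^{\st H_1},\vp]=[h^{-1}[\vp^{\st H_1},h],\vp]$, and then subtract the two harmonic metric equations term by term. Your explicit retention of the conformal factor $h_0^2$ on the $\pa_y$ term is in fact more faithful to the equation being subtracted than the lemma's own statement, which silently drops it.
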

\proof In holomorphic gauge, 
$$
A_2=H_2^{-1}\pa H_2=h^{-1}H_1^{-1}\pa H_1 h+h^{-1}\pa h=H_1^{-1}\pa H_1+h^{-1}\pa^{A_1}h,
$$
hence $\Bp(H_2^{-1}\pa H_2)-\Bp(H_1^{-1}\pa H_1)=\Bp(h^{-1}\pa^{A_1}h)$. 

Similarly, 
$$
H_2^{-1}\pa_y H_2=H_1^{-1}\pa_y H_1+h^{-1}(\pa_y h+[H_1^{-1}\pa_y H_1,h])=H_1^{-1}\pa_y H_1+h^{-1}\pa_y^{\MA_y}h.
$$
Hence $\pa_y(H_2^{-1}\pa_y H_2)-\pa_y(H_1^{-1}\pa_y H_1)=\pa_y(h^{-1}\pa_y^{\MA_y}h).$

Finally,
\[
[\vp^{\st H_2},\vp]-[\vp^{\st H_1},\vp]=[h^{-1}[\vp^{\st H_1},h],\vp].
\]

Altogether, we deduce the stated equation from the harmonic metric equations
$$
\Bp(H_j^{-1}\pa H_j)+[\vp^{\st H_j},\vp]+h_0^2\pa_y(H_j^{-1}\pa_yH_j)=0,\ j = 1, 2. 
$$
\qed

We next show that $\sigma$ is subharmonic. 
\begin{proposition}
Define $h=H_1^{-1}H_2$ as above, where $H_1$ and $H_2$ satisfy the Extended Bogonomy equation. Then $(\Delta+\pa_y^2)\sigma\geq 0$ 
on $\Sigma\ti (0,+\infty)$.
\end{proposition}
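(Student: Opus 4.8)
The plan is to prove the stronger assertion that $\Tr h$ and $\Tr(h^{-1})$ are each subharmonic; since $\sigma=\Tr h+\Tr(h^{-1})-4$ this gives the claim, and since interchanging the roles of $H_1$ and $H_2$ sends $h=H_1^{-1}H_2$ to $h^{-1}$, it suffices to treat $\Tr h$. Throughout I regard $h$ as a positive endomorphism which is self-adjoint with respect to $H_1$, and I work in the holomorphic parallel gauge in which $\MD_1=\Bp$ and $\MD_3=\pa_y$. The first inputs I would record are the two adjoint identities
\[
\Bp h=(\pa^{A_1}h)^{\st H_1},\qquad \pa_y h=(\pa_y^{A_1}h)^{\st H_1}.
\]
These hold because the Chern connection of $H_1$ pairs the operators $\Bp,\pa_y$ with their covariant counterparts $\pa^{A_1},\pa_y^{A_1}$, the $y$-direction playing the role of a second holomorphic direction once $\MA_y$ is viewed as its connection, exactly as in the K\"ahler picture of \S2.

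Next I would compute the two second-order pieces. Setting $P:=h^{-1}\pa^{A_1}h$ and $Q:=h^{-1}\pa_y^{A_1}h$, and using that the trace kills the connection commutators, one finds
\[
\Bp\pa_z\Tr h=\Tr\big((\Bp h)\,P\big)+\Tr\big(h\,\Bp P\big),\qquad \pa_y^2\Tr h=\Tr\big((\pa_y h)\,Q\big)+\Tr\big(h\,\pa_y Q\big).
\]
The role of the preceding lemma is precisely to put $\Bp P$ in divergence form: restoring the conformal factor $h_0^2$ carried by the $\pa_y$ term in \eqref{ezHBE}, it reads $\Bp P+h_0^2\pa_y Q+[h^{-1}[\vp^{\st H_1},h],\vp]=0$. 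Substituting this for $\Tr(h\,\Bp P)$ and assembling $\Delta+\pa_y^2$, with $\Delta$ and the $\Bp P$ term sharing the same conformal normalization, the two second-order cross terms $\Tr(h\,\pa_y Q)$ cancel, leaving only first-order and Higgs contributions,
\[
(\Delta+\pa_y^2)\Tr h = h_0^{-2}\Tr\big((\Bp h)\,P\big)+\Tr\big((\pa_y h)\,Q\big)-h_0^{-2}\Tr\big(h[h^{-1}[\vp^{\st H_1},h],\vp]\big).
\]

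It remains to check that all three terms are nonnegative. For the first two I would use the adjoint identities: $\Tr((\Bp h)P)=\Tr(P^{\st H_1}hP)=\Tr(H_1^{-1}P^{\da}H_2P)$, which is a positive-definite Hermitian form in $P$ since $H_1^{-1}$ and $H_2$ are positive, and similarly for $Q$. For the Higgs term I would use the clean identity $h^{-1}\vp^{\st H_1}h=\vp^{\st H_2}$, so that $X:=h^{-1}[\vp^{\st H_1},h]=\vp^{\st H_2}-\vp^{\st H_1}$ and the term equals $\Tr(h[\vp,X])$. A direct trace computation then shows $\Tr(h[\vp,X])\ge 0$; this is most transparent after using the earlier proposition that the harmonic metrics split, $H_j=\mathrm{diag}(h_j,h_j^{-1})$, in which case the term becomes a positive multiple of $(h_2^2-h_1^2)^2\,h_1^{-1}h_2^{-1}\big(|\al|^2+|\be|^2h_1^{-2}h_2^{-2}\big)\ge 0$. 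Summing the resulting subharmonicity of $\Tr h$ and $\Tr(h^{-1})$ yields $(\Delta+\pa_y^2)\sigma\ge 0$ on $\Si\ti(0,\infty)$.

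I expect the main obstacle to be twofold. First, one must keep the conformal factor $h_0^2$ consistent between the equation of the preceding lemma and the Laplacian $\Delta$, so that the second-order $\pa_y Q$ terms genuinely cancel rather than leaving a spurious multiple; this is a bookkeeping point but an essential one. Second, and more substantively, one must establish the sign of the Higgs term in general. The identity $h^{-1}\vp^{\st H_1}h=\vp^{\st H_2}$ is what makes this tractable, reducing the question to the manifestly nonnegative expression above; I regard verifying that reduction as the genuine technical heart of the argument.
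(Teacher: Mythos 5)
Your analytic skeleton is the same as the paper's: reduce to subharmonicity of $\Tr h$ (with $\Tr(h^{-1})$ obtained by swapping $H_1\leftrightarrow H_2$), expand $\Bp\pa_z\Tr h$ and $\pa_y^2\Tr h$ into first-order pieces plus $\Tr(h\,\Bp P)$ and $\Tr(h\,\pa_y Q)$, substitute the equation of the preceding lemma, and check signs. Your bookkeeping is in fact a bit cleaner than the paper's (exact cancellation of the second-order terms rather than a chain of inequalities), you correctly flag the conformal factor $h_0^2$ that the lemma as printed omits, and your positivity argument $\Tr((\Bp h)P)=\Tr(P^{\st H_1}hP)=\Tr(H_1^{-1}P^{\da}H_2P)\geq 0$ for the first-order terms is correct.

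The gap is the sign of the Higgs term. You assert that $\Tr(h[\vp,X])\geq 0$ follows from ``a direct trace computation,'' but the only computation you supply assumes both metrics split diagonally, $H_j=\mathrm{diag}(h_j,h_j^{-1})$. That assumption is not available in the generality this proposition requires: the splitting result you invoke concerns harmonic metrics of the Hitchin equations on $\Si$ (the $y\to\infty$ limiting data), not solutions of the extended Bogomolny equations on $\Si\ti\RP$, and the uniqueness theorems which this proposition feeds compare metrics that are explicitly allowed off-diagonal entries (the paper writes $H_j=\begin{pmatrix} p_j & q_j \\ q_j^{\da} & s_j\end{pmatrix}$ with $q_j$ merely decaying). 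Indeed, if both solutions could be assumed diagonal a priori, the uniqueness question would largely collapse to a scalar comparison. Fortunately your claimed inequality is true in general, and the repair is one line --- it is exactly the paper's argument: since $h^{\st H_1}=h$, the endomorphism $B:=[h,\vp]$ satisfies $B^{\st H_1}=[\vp^{\st H_1},h]$, and expanding the vanishing trace of the commutator $[[\vp^{\st H_1},h],\vp]$ gives
\[
0=\Tr\bigl([h,\vp]\,h^{-1}[\vp^{\st H_1},h]\bigr)+\Tr\bigl(h\,[h^{-1}[\vp^{\st H_1},h],\vp]\bigr),
\]
whence
\[
\Tr\bigl(h[\vp,X]\bigr)=\Tr\bigl([h,\vp]\,h^{-1}[\vp^{\st H_1},h]\bigr)=\Tr\bigl(B\,h^{-1}B^{\st H_1}\bigr)\geq 0,
\]
because $h^{-1}>0$ and $\st H_1$ is a genuine adjoint. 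With this substitution in place of the diagonal reduction, your proof is complete and coincides with the paper's.
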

\proof 
We first compute 
\begin{equation}
\begin{split}
\Bp\pa_z\Tr(h)&=\Tr(\Bp \pa^{A_1}h)\\
&=\Tr(\Bp(h h^{-1} \pa^{A_1}h))\\
&=\Tr(\Bp(h) h^{-1} \pa^{A_1}h)+\Tr(h\Bp(h^{-1} \pa^{A_1}h))\\
&\geq \Tr(h\Bp(h^{-1} \pa^{A_1}h)),
\end{split}
\end{equation}
since $\Tr(BhB^{\st})\geq 0$ for any matrix $B$.

Continuing on, 
\begin{equation}
\begin{split}
\pa_y^2\Tr(h)&=\Tr(\pa_y\pa_y^{\MA_1}h)\\
&=\Tr( (\pa_y h)h^{-1}\pa_y^{\MA_1}h)+\Tr(h(\pa_y(h^{-1}\pa_y^{\MA_1}h)))\\
&\geq \Tr(h(\pa_y(h^{-1}\pa_y^{\MA_1}h))),
\end{split}
\end{equation}
where we use $\pa_y=(\pa_y^{\MA_1})^{\st}$ and that $\st$ is the conjugate transpose with respect to $H_1$.

Finally, 
\begin{equation}
\begin{split}
0&=\Tr([[\vp^{\st},h],\vp])\\
&=\Tr([h,\vp]h^{-1}[\vp^{\st},h])+\Tr(h[h^{-1}[\vp^{\st},h],\vp]).
\end{split}
\end{equation}
Since $\Tr([h,\vp]h^{-1}[\vp^{\st},h])\geq 0$, we obtain $\Tr(h[h^{-1}[\vp^{\st},h],\vp])\leq 0$.

Putting these together gives 
\begin{equation}
\begin{split}
(\Bp\pa_z+h_0^2\pa_y^2)\Tr(h)&\geq \Tr(h\Bp(h^{-1} \pa^{A_1}h)+h_0^2h(\pa_y(h^{-1}\pa_y^{\MA_1}h)))\\
&\geq \Tr(h\Bp(h^{-1} \pa^{A_1}h)+h_0^2h(\pa_y(h^{-1}\pa_y^{\MA_1}h+h[h^{-1}[\vp^{\st},h],\vp])))\\
&\geq 0,
\end{split}
\end{equation}
and dividing by $h_0^2$ proves the claim. 
\qed
\end{subsection}

\begin{subsection}{Asymptotics of the Hermitian metric}
In order to apply the subharmonicity of $\sigma(H_1, H_2)$ from the last subsection, we need to understand the asymptotics of this function
near $y=0$. This, in turn, relies on a detailed examination of the asymptotics of the Hermitian metric.
\begin{proposition}
\label{HermitianmetricUniqueness}
Fix a Higgs pair $(E\cong L^{-1}\oplus L,\vp=\begin{pmatrix}
t & \al \\
\be & -t
\end{pmatrix})$.  For any $p \in\Si$, choose an open set $U_p$ around  $(p,0)$ in $\Si \ti \RP$.  Let $H$ be a solution to the Hermitian 
\EBE \eqref{algebraEBE}; as explained earlier, $H$ is polyhomogeneous on $(\Si \ti \RP)_{p_1 \ldots p_N}$ (where the $p_j$ are the zeroes of $\alpha$).

(1) Suppose in some local trivilization in $U_p$ that $\vp|_{U_x}=\begin{pmatrix}
0 & 1 \\
q & 0
\end{pmatrix}$, where $q$ is holomorphic.  Suppose also that 
\begin{equation}
H=\begin{pmatrix}
\MO(y^{-1}) & \MO(1) \\
\MO(1) & \MO(1)
\end{pmatrix}.
\end{equation}
Here $\MO(y^s)$ indicates a polyhomogeneous expansion with lowest order term a smooth multiple of $y^s$. Suppose also that 
$H$ satisfies the Nahm pole boundary condition in unitary gauge.  Then
\begin{equation}
H \sim \begin{pmatrix}
y^{-1}g_0+\MO(1)  & o(1) \\
o(1) & y g_0^{-1}+\MO(1)
\end{pmatrix},
\end{equation}
where $o(1)$ indicates a polyhomogeneous expansion with positive leading exponent.

(2) Suppose that in a local trivilization, $\vp|_{U_p}=\begin{pmatrix}
t & z^n \\
q & -t
\end{pmatrix}$ where $z=0$ is the point $p$ and $q$ holomorphic. If, in spherical coordinates
\begin{equation}
H=\begin{pmatrix}
\MO(y^{-1}R^{-n}) & \MO(1) \\
\MO(1) & \MO(1)
\end{pmatrix},
\end{equation}
then
\begin{equation}
H=\begin{pmatrix}
\MO(y^{-1}R^{-n}) & \MO(1) \\
\MO(1) & \MO(yR^n)
\end{pmatrix}
\label{Hknot}
\end{equation}
\end{proposition}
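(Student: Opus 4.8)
The plan is to reduce the statement to an algebraic consequence of the unimodularity $\det H \equiv 1$, after which the two a priori bounds on $H_{11}$ and $H_{12}$ pin down the order of $H_{22}$. The first step is to record that $\det H \equiv 1$: since $\vp = \begin{pmatrix} t & z^n \\ q & -t\end{pmatrix}$ is trace-free and $E$ carries an $SL(2,\mathbb{C})$ structure with $\det E = \MO$, the associated metric is unimodular; equivalently, in a parallel holomorphic gauge one has $H = g^\da g$ with $g$ valued in $SL(2,\mathbb{C})$, so $\det H = |\det g|^2 = 1$. If one wishes to avoid invoking this normalization, taking the trace of \eqref{ezHBE} shows that $u := \log \det H$ is harmonic for the metric $g_0^2|dz|^2 + dy^2$, since $\Tr[\vp, \vp^\star] = 0$ and $\Tr(H^{-1}\pa H) = \pa \log\det H$; a harmonic function cannot carry a $-\log y - n\log R$ leading singularity, so the crude bounds cannot force a blow-up of $\det H$, and in fact $\det H = \MO(1)$, which is all that is needed below.

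Writing $H = \begin{pmatrix} H_{11} & H_{12} \\ \overline{H_{12}} & H_{22}\end{pmatrix}$, the relation $\det H = H_{11}H_{22} - |H_{12}|^2$ gives
\begin{equation*}
H_{22} = \frac{\det H + |H_{12}|^2}{H_{11}}.
\end{equation*}
The numerator is under immediate control: by hypothesis $H_{12} = \MO(1)$, and $\det H = \MO(1)$ (indeed $\equiv 1$), so the numerator is polyhomogeneous and $\MO(1)$. For the denominator I would use that $H$ is positive definite, so $H_{11} = |(1,0)^\top|_H^2 > 0$ everywhere, and that by hypothesis $H_{11}$ is polyhomogeneous with leading term a smooth multiple of $y^{-1}R^{-n}$. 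The crucial point is that this leading coefficient is strictly positive: the section $(1,0)^\top$ is transverse to the vanishing line subbundle $L'$ (which is spanned by $(0,1)^\top$ in this trivialization), so its $H$-norm blows up at the full rate $y^{-1}R^{-n}$. Hence $H_{11}^{-1}$ is again polyhomogeneous, with leading term the reciprocal smooth multiple of $yR^n$, so $H_{11}^{-1} = \MO(yR^n)$.

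Combining the two, $H_{22} = (\det H + |H_{12}|^2)\, H_{11}^{-1} = \MO(1)\cdot\MO(yR^n) = \MO(yR^n)$, using that a product of polyhomogeneous functions is polyhomogeneous of order the sum of the two orders. This is exactly the claimed improvement of the crude bound $H_{22} = \MO(1)$, and it is self-consistent: the expansions $H_{11}H_{22} = \MO(1)$ and $|H_{12}|^2 = \MO(1)$ recombine to $\det H = 1$.

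The main obstacle is the strict positivity of the leading coefficient of $H_{11}$, i.e. the matching lower bound $H_{11} \gtrsim y^{-1}R^{-n}$; the crude hypotheses alone are also consistent with $H \equiv \mathrm{Id}$, so some genuine geometric input is unavoidable here. I expect to settle this by comparing with the model metric $e^{U_n}$ of \S3.3 and the regularity and polyhomogeneity of \S3.4: positivity of $H$ forces the leading coefficient to be nonnegative, while the transversality of $(1,0)^\top$ to $L'$ (equivalently, that a generic parallel section blows up at the full rate) forces it to be nonzero, whence positive. Once this lower bound is secured, the rest is the routine algebra above, and in contrast to part (1) no indicial-root analysis of \eqref{ezHBE} is needed to extract precise leading coefficients, since part (2) only asserts orders of vanishing and blow-up.
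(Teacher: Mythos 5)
Your determinant reduction is sound as algebra --- and in fact the paper's own proof uses the same normalization, writing $H=g^2$ with $ad-b\bar b=1$ --- but it leaves the entire analytic content of part (2) unproven. The conclusion $H_{22}=\MO(yR^n)$ hinges on the lower bound $H_{11}\gtrsim y^{-1}R^{-n}$ (strict positivity of the leading coefficient), which you correctly flag as the main obstacle but then only sketch, and the sketch is circular: the generalized Nahm pole boundary condition is defined in \emph{unitary} gauge, as a condition on the fields $(A,\phi,\phi_1)$ relative to the model, so to conclude that a section transverse to the vanishing line bundle has $H$-norm blowing up at the full rate $y^{-1}R^{-n}$ you must first translate that unitary-gauge condition into asymptotics of $H$ --- which is precisely what the proposition asserts and what has to be proven. (Item (3) of the holomorphic data, that generic parallel sections blow up like $y^{-1/2}$, concerns the region away from the knot points and cannot produce the $R^{-n}$ factor in the rate.) The paper supplies exactly this missing step: writing $H=g^2$ with $g=\begin{pmatrix} a & b\\ \bar b & d\end{pmatrix}$, $ad-b\bar b=1$, one computes $\phi_z=g\vp g^{-1}$ entrywise; positivity and the crude bound $H_{22}=d^2+|b|^2=\MO(1)$ force $b,d$ bounded, and then Proposition \ref{Knotsingularityexpansion} applied to the upper-right entry, $-b^2q+a^2z^n-2bat\sim z^n e^{U_n}+\cdots$, forces $a\sim y^{-1/2}R^{-n/2}$. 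That is your missing lower bound; everything after it (your division by $H_{11}$, or equivalently the paper's $d=(1+|b|^2)/a$) is the same routine algebra.

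Separately, your proposal does not address part (1), and the determinant identity alone cannot: part (1) asserts precise leading coefficients $y^{-1}g_0$, $yg_0^{-1}$ together with off-diagonal decay $o(1)$, whereas from $\det H=1$, $H_{11}\sim y^{-1}g_0$ and $H_{12}=\MO(1)$ one would only obtain $H_{22}\sim(1+|H_{12}|^2)\,g_0^{-1}y$, which has the wrong coefficient unless $H_{12}=o(1)$ is established first. In the paper that decay comes from the same unitary-gauge computation, via the conditions $bdq-a\bar b=o(1)$ and $d^2q-\bar b^2=o(1)$ of Proposition \ref{Nahmpoleexpansion}, which yield $b=o(y^{1/2})$; no bookkeeping with $\det H$ can replace this input.
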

\proof
We first address (1). Write 
$H=\begin{pmatrix}
h_{11} & h_{12} \\
h_{21} & h_{22}
\end{pmatrix}$ and consider a gauge transformation $g$ for which $H=g^2$. Then $g^{\da}=g$ and 
$
g=\begin{pmatrix}
a & b \\
\bar{b} & d
\end{pmatrix}$ where $a$ and $d$ are real functions and $ad-b\bb=1$.  We then compute
\begin{equation}
\phi_z=g\varphi g^{-1}= 
\begin{pmatrix}
a & b \\
\bar{b} & d
\end{pmatrix} \, 
\begin{pmatrix}
0 & 1 \\
q & 0
\end{pmatrix} \, \begin{pmatrix}
d & -b \\
-\bar{b} & a
\end{pmatrix}
=\begin{pmatrix}
bdq-a\bb & -b^2q+a^2 \\
d^2q-\bb^2 & -bdq+a\bb
\end{pmatrix}.
\end{equation}
By proposition \ref{Nahmpoleexpansion}, the Nahm pole boundary condition requires that
\begin{equation}
bdq-a\bb\sim o(1),\ d^2q-\bb^2\sim o(1),\ -b^2q+a^2\sim \frac{g_0}{y} + \MO(1).
\label{expansionuniquenessnahm}
\end{equation}
By definition, $H=g^2=\begin{pmatrix}
a^2+b\bb & ab+bd \\
\bb a+\bb d & d^2+b\bb
\end{pmatrix}$. 
The leading terms of $d^2+b\bar{b}$ is positive, hence $b$ and $d$ are bounded. Combining this with \eqref{expansionuniquenessnahm} and 
the relation $ad-b\bar{b}=1$, we obtain 
\begin{equation}
a\sim y^{-\frac{1}{2}}g_0^{\frac{1}{2}},\;d\sim y^{\frac{1}{2}}g_0^{-\frac 12},\;b =  o(y^{\frac{1}{2}})
\end{equation}
and  thus 
\begin{equation}
H=\begin{pmatrix}
a^2+b\bb & ab+bd \\
\bb a+\bb d & d^2+b\bb
\end{pmatrix}=\begin{pmatrix}
y^{-1}g_0+ o(y^{-1}) & o(1) \\
o(1) & y g_0^{-1}+ o(y)
\end{pmatrix}.
\end{equation}

As for (2), we compute 
\begin{equation}
\begin{split}
\phi_z=g\varphi g^{-1}  & =  
\begin{pmatrix}
a & b \\
\bar{b} & d
\end{pmatrix}\, 
\begin{pmatrix}
t & z^n \\
q & -t
\end{pmatrix}\,
\begin{pmatrix}
d & -b \\
-\bar{b} & a
\end{pmatrix} \\ 
& = \begin{pmatrix}
bdq-a\bb z^n+atd+|b|^2t & -b^2q+a^2z^n-2bat \\
d^2q-z^n\bb^2+2td\bar{b} & -bdq+a\bb z^n-|b|^2t-adt
\end{pmatrix}.
\end{split}
\end{equation}
By Proposition \ref{Knotsingularityexpansion}, the knot singularity implies that 
\begin{equation}
bdq-a\bb z^n+atd+|b|^2t \sim \MO(1),\ -b^2q+a^2z^n-2ba\sim z^{n}e^{U_n}+\cdots,\ 
-bdq+a\bb z^n-|b|^2t-adt \sim \MO(1).
\end{equation}
As before, $H=g^2=\begin{pmatrix}     a^2+b\bb & ab+bd \\     \bb a+\bb d & d^2+b\bb \end{pmatrix}$ where $d^2+b\bb\sim \MO(1)$, so by the 
same positivity, $d$ and $b$ are both $\MO(1)$.  Next, $e^{U_n}=f(\psi)/yR^n$ where $f$ is regular.  From $-b^2q+a^2z^n-2ba\sim z^{n}e^{U_n}$ 
we get $a\sim y^{-\frac{1}{2}}R^{-\frac{n}{2}}$. In addition, since $ab+bd = \MO(1)$ and $ad-b\bb=1$, we see that $b\sim y^{\frac12}R^{\frac{n}{2}}$, 
so $d\sim y^{\frac 12}R^{\frac n2}$. Altogether, $H$ has the form \eqref{Hknot}.
\qed

\begin{proposition}
Suppose $H_j=\begin{pmatrix} p_j & q_j \\  q_j^{\da} & s_j \end{pmatrix}$, $j = 1, 2$, are two solutions which both satisfy the Nahm pole boundary 
condition at $y = 0$ and have the same limit as $y\to \infty$. Then $H_1=H_2$.
\end{proposition}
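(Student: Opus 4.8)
The plan is to deduce $H_1 = H_2$ from the maximum principle applied to the distance function $\sigma = \sigma(H_1,H_2)$ studied above. Two facts are already in hand: $\sigma \geq 0$ with equality exactly when $H_1 = H_2$, and $(\Delta + \pa_y^2)\sigma \geq 0$ on $\Si \ti (0,\infty)$. Since this region has two ends, at $y=0$ and at $y=\infty$, it suffices to show that $\sigma \to 0$ uniformly on $\Si$ at both ends; the strong maximum principle will then force the nonnegative subharmonic function $\sigma$ to vanish identically, which is precisely the assertion. First I would dispose of the limit $y \to \infty$: by hypothesis $H_1$ and $H_2$ converge to one and the same nondegenerate limiting metric $H_\infty$ on $\Si$, so $H_1^{-1}H_2 \to \mathrm{Id}$ and hence $\sigma \to 0$ there.

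The essential point is the behaviour as $y \to 0$, where both metrics blow up like the Nahm pole model. The key is to exploit the normalization $\det H_1 = \det H_2 = 1$ coming from the $SL(2,\CC)$ structure. Writing $H_j = \begin{pmatrix} p_j & q_j \\ q_j^{\da} & s_j \end{pmatrix}$ with $p_j s_j - |q_j|^2 = 1$, a short manipulation using only these determinant relations yields the identity
\[
\Tr(H_1^{-1}H_2) = 2 + (p_1 - p_2)(s_2 - s_1) + |q_1 - q_2|^2,
\]
together with the same expression for $\Tr(H_2^{-1}H_1)$, so that $\sigma = 2\big[(p_1-p_2)(s_2-s_1) + |q_1-q_2|^2\big]$. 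The virtue of this formula is that it depends only on the \emph{differences} of the entries: the singular leading parts of $p_j$ and $s_j$ are universal, being the same Nahm pole model terms $y^{-1}g_0$ and $yg_0^{-1}$ by Proposition~\ref{HermitianmetricUniqueness}, and therefore cancel, leaving $\sigma$ controlled entirely by the decaying remainders.

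It then remains to verify the rates. From Proposition~\ref{HermitianmetricUniqueness}(1) one has $p_j = y^{-1}g_0 + o(y^{-1})$ and $q_j = o(1)$, and the determinant relation $s_j = (1 + |q_j|^2)/p_j$ then forces $s_j = yg_0^{-1} + o(y)$. Hence $p_1 - p_2 = o(y^{-1})$ and $s_2 - s_1 = o(y)$, whose product tends to $0$, while $|q_1 - q_2|^2 = o(1) \to 0$. This gives $\sigma \to 0$ as $y \to 0$, uniformly on the compact surface $\Si$. With both boundary limits established, the proof concludes: if $\sigma$ were positive somewhere then, being continuous and vanishing at both ends, it would attain a positive interior maximum on $\Si \ti (0,\infty)$, and the strong maximum principle for the subharmonic $\sigma$ would force it to be locally constant and hence identically equal to that maximum, contradicting $\sigma \to 0$. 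Therefore $\sigma \equiv 0$ and $H_1 = H_2$.

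The main obstacle is precisely the $y \to 0$ analysis, since the metrics are themselves unbounded there; the determinant identity is what converts this apparent difficulty into a harmless statement about differences of subleading terms. In the presence of knot singularities the same scheme applies near each marked point, with the spherical radius $R$ replacing $y$ and the asymptotics of Proposition~\ref{HermitianmetricUniqueness}(2) replacing those of part (1), though there the bounded off-diagonal terms require a slightly more careful treatment to ensure that $|q_1 - q_2|^2 \to 0$ at the front face.
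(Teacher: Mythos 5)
Your proof is correct and takes essentially the same route as the paper: establish $\sigma(H_1,H_2)\to 0$ as $y\to 0$ and $y\to\infty$ using the asymptotics $p_j\sim y^{-1}g_0$, $s_j\sim yg_0^{-1}$, $q_j = o(1)$ from Proposition \ref{HermitianmetricUniqueness}, then conclude via nonnegativity, subharmonicity, and the maximum principle. Your determinant identity $\Tr(H_1^{-1}H_2)=2+(p_1-p_2)(s_2-s_1)+|q_1-q_2|^2$ is a valid and slightly cleaner repackaging of the paper's direct computation $\Tr(H_1^{-1}H_2)=s_1p_2+p_1s_2-q_1q_2^{\da}-q_1^{\da}q_2=2+o(1)$, but it is the same underlying argument.
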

\proof
By Propositions \ref{metricaprior} and \ref{HermitianmetricUniqueness}, we see that as $y\to 0$, $p_j\sim y^{-1}g_0+\cdots$, $s_j\sim yg_0^{-1}+\cdots$, 
$q_j\sim o(1)$.   We claim that this implies that $\sigma(H_1,H_2)  \to 0$ as $y \to 0$.  First, 
\[
H_1^{-1}H_2=\begin{pmatrix} s_1p_2-q_1q_2^{\da} & \star \\ \star & -q_1^{\da}q_2+p_1s_2 \end{pmatrix},
\]
so 
\begin{equation}
\Tr(H_1^{-1}H_2)=s_1p_2-q_1q_2^{\da}-q_1^{\da}q_2+p_1s_2 = 2+o(1).
\end{equation}
The same holds for $\Tr(H_2^{-1}H_1)$. This proves the claim. 

We have now see that $\sigma(H_1, H_2)$ is nonnegative and subharmonic, and approaches $0$ as $y \to 0$ and also as $y \to \infty$,
hence $\sigma(H_1, H_2) \equiv 0$, i.e., $H_1 = H_2$. 
\qed

\begin{proposition}
Let $H_1$ and $H_2$ be two Hermitian metrics which are both solutions with a knot singularity of degree $n$ at $(p,0)$. Then there
exists a constant $C$ such that $\sigma(H_1, H_2)\leq C$ in a neighborhood $U$ of $(p,0)$. 
\end{proposition}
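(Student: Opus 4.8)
The plan is to read the boundedness of $\sigma(H_1,H_2)$ directly off the matching singular and decaying rates of the two metrics, as supplied by Proposition~\ref{HermitianmetricUniqueness}(2). Since $H_1$ and $H_2$ both solve the \EBE with a knot singularity of the same degree $n$ at $(p,0)$, that proposition gives, in a common local trivialization adapted to the knot, the normal form
\begin{equation*}
H_j=\begin{pmatrix} p_j & q_j \\ q_j^{\da} & s_j \end{pmatrix},\quad p_j=\MO(y^{-1}R^{-n}),\ \ s_j=\MO(yR^n),\ \ q_j=\MO(1),\quad j=1,2.
\end{equation*}
The first step is to record that, since we work with the $SL(2,\mathbb{C})$ structure, $\det H_j=1$, so that $H_j^{-1}=\begin{pmatrix} s_j & -q_j \\ -q_j^{\da} & p_j \end{pmatrix}$; in particular the orders of the diagonal entries are interchanged, the $(1,1)$ slot now decaying like $yR^n$ and the $(2,2)$ slot blowing up like $y^{-1}R^{-n}$.

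Next I would simply multiply out the products. Exactly as in the Nahm pole computation above,
\begin{equation*}
\Tr(H_1^{-1}H_2)=s_1p_2+p_1s_2-q_1q_2^{\da}-q_1^{\da}q_2,
\end{equation*}
and the decisive observation is that each of the four terms is $\MO(1)$: the two diagonal products cancel their singularities, since $s_1p_2=\MO(yR^n)\cdot\MO(y^{-1}R^{-n})=\MO(1)$ and likewise $p_1s_2=\MO(1)$, while the off-diagonal products $q_1q_2^{\da}$ and $q_1^{\da}q_2$ are bounded outright. The identical estimate applies to $\Tr(H_2^{-1}H_1)$. Hence $\sigma(H_1,H_2)=\Tr(H_1^{-1}H_2)+\Tr(H_2^{-1}H_1)-4$ is a bounded (indeed polyhomogeneous) function near $(p,0)$, and there is a constant $C$ with $\sigma(H_1,H_2)\le C$ on a neighborhood $U$ of $(p,0)$.

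I do not expect any genuine obstacle here: the whole content is the \emph{reciprocity} of the blow-up and decay rates of the two diagonal entries, which is precisely what makes the singular part of one metric annihilated against the inverse of the other. The only two points needing care are, first, that both $H_1$ and $H_2$ really do possess the same asymptotic normal form in one trivialization, which is where one uses that they share the knot degree $n$ and the same vanishing line bundle, so that Proposition~\ref{HermitianmetricUniqueness}(2) governs both; and second, that the leading coefficients of $p_j$ and $s_j$ are nonvanishing (equivalently $\det H_j$ is pinned to $1$), so that the products above are genuinely $\MO(1)$ rather than merely bounded by the naive product of the two individual bounds. Since $\sigma\ge 0$ always, only the upper bound is at issue, and the computation delivers it at once.
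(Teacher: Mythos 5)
Your proposal is correct and follows essentially the same route as the paper: both read the entrywise asymptotics $y^{-1}R^{-n}$, $yR^{n}$, $\MO(1)$ off Proposition~\ref{HermitianmetricUniqueness} (together with Proposition~\ref{metricaprior}), use $\det H_j=1$ to invert, and observe that the singular rates of the diagonal entries annihilate in $\Tr(H_1^{-1}H_2)$ and $\Tr(H_2^{-1}H_1)$, giving $\sigma(H_1,H_2)=\MO(1)$. The only difference is cosmetic: your caveat about nonvanishing leading coefficients is not actually needed for the upper bound, since the product of the two one-sided bounds already yields $\MO(1)$.
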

\proof
Write $H_j=\begin{pmatrix} a_j & b_j \\  b_j^{\da} & d_j \end{pmatrix}$, $j = 1, 2$. By Propositions \ref{metricaprior} 
and \ref{HermitianmetricUniqueness}, 
$$
a_j\sim y^{-1}R^{-n},\ d_j \sim yR^{n}, \ b_j = o(1),\ b_j^{\da}  = o(1).
$$
Thus $\Tr(H_1H_2^{-1})=a_1d_2-b_1b_2^{\da}-b_1^{\da}b_2+d_1a_2 = \MO(1)$, and similarly, $\Tr(H_2^{-1}H_1) = \MO(1)$. The result 
follows immediately.  
\qed

\medskip

We next recall the Poisson kernel of $\Delta_g = \Delta_{g_0} + \pa_y^2$. For any $p \in \Si$, $P_p(z,y)$ is the unique function on $\Si \ti \RP$ 
which satisfies $\Delta_{g} P_q(z,y) = 0$, $P|_{y=0} =\delta_q$, and $P(z,y)\to 1/\mbox{Area}(\Si)$ as $y\to \infty$.

\begin{theorem}
Suppose that there exist two Hermitian metrics $H_1$, $H_2$ which are solutions and satisfy the Nahm pole boundary condition with 
knot singularities at $p_j$ of degree $n_j$, as determined by the component $\alpha$ in the Higgs field 
$\vp=\begin{pmatrix} t & \al \\ \be & -t \end{pmatrix}$. Suppose also that $H_1$ and $H_2$ have the same limit as 
$y \to \infty$. Then $H_1 = H_2$. 
\end{theorem}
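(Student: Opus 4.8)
The plan is to reduce this to the knot-free uniqueness argument by absorbing the finitely many boundary points $(p_j,0)$ with a barrier built from the Poisson kernel. As in the knot-free case, $\sigma(H_1,H_2)$ is nonnegative and, by the subharmonicity proposition, satisfies $(\Delta_{g_0}+\pa_y^2)\sigma\geq 0$ on $\Si\ti(0,\infty)$. Collecting the boundary analysis already carried out: by Propositions \ref{metricaprior} and \ref{HermitianmetricUniqueness} one has $\sigma\to 0$ as $y\to 0$ away from the knot points, while $\sigma\to 0$ as $y\to\infty$ since $H_1$ and $H_2$ share the same limit; and by the previous proposition $\sigma\leq C$ in a neighborhood of each $(p_j,0)$. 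The only discrepancy with the knot-free situation is therefore this finite set of boundary points, at which $\sigma$ is merely bounded rather than decaying.

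First I would fix $\epsilon>0$ and set $v_\epsilon:=\sigma-\epsilon\sum_j P_{p_j}$. Since each $P_{p_j}$ is harmonic for $\Delta_g$, the function $v_\epsilon$ is again subharmonic, and the point is to examine its boundary behavior. Away from the knots on $y=0$ both $\sigma$ and $P_{p_j}$ tend to $0$, so $\limsup v_\epsilon\leq 0$ there; as $y\to\infty$ one has $v_\epsilon\to-\epsilon/\mbox{Area}(\Si)<0$; and near each $(p_j,0)$ the Poisson kernel blows up like $y/R^3=R^{-2}\sin\psi$ in the spherical coordinates of the blowup, so $\epsilon P_{p_j}\to+\infty$ while $\sigma\leq C$ stays bounded, forcing $v_\epsilon\to-\infty$. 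In particular, on a sufficiently small hemisphere $R=\delta$ around each knot we have $v_\epsilon<0$.

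Next I would apply the maximum principle to $v_\epsilon$ on $\Si\ti(0,\infty)$ with small half-balls around the knots removed. Since $\sigma$ is globally bounded and $v_\epsilon\leq 0$ on every piece of the boundary of this domain (the hemispheres $R=\delta$, the region $y=0$ away from the knots, and the end $y\to\infty$), subharmonicity forces $v_\epsilon\leq 0$ throughout, i.e. $\sigma\leq\epsilon\sum_j P_{p_j}$ on $\Si\ti(0,\infty)\setminus\{(p_j,0)\}$. Letting $\epsilon\to 0$ gives $\sigma\leq 0$, and combined with $\sigma\geq 0$ this yields $\sigma\equiv 0$, hence $H_1=H_2$.

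I expect the main obstacle to be the analysis near the knot points: one must know both that $\sigma$ remains bounded there (supplied by the previous proposition via the metric asymptotics \eqref{Hknot} of Proposition \ref{HermitianmetricUniqueness}) and that the Poisson kernel genuinely blows up at the boundary point $(p_j,0)$ fast enough to dominate this bounded quantity on small hemispheres. The rate $P_{p_j}\sim y/R^3$, read off from the half-space model, is exactly what makes the comparison work and lets the knot singularities be treated as removable; the remaining noncompactness in the $y$ direction is handled precisely as in the knot-free proposition.
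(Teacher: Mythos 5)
Your proposal follows the paper's own proof essentially step for step: the paper likewise combines the subharmonicity of $\sigma(H_1,H_2)$, its vanishing as $y\to 0$ away from the knots and as $y\to\infty$, its boundedness near each $(p_j,0)$, and the barrier $\epsilon\sum_{j}P_{p_j}$, concluding $\sigma\leq\epsilon Q$ for every $\epsilon>0$ and hence $\sigma\equiv 0$. The only difference is cosmetic --- you spell out the maximum principle by excising half-balls; just note that on a hemisphere $R=\delta$ the Poisson kernel behaves like $\delta^{-2}\sin\psi$ and so is \emph{not} uniformly large near $\psi=0$, so on that portion of the hemisphere one must invoke $\sigma\to 0$ as $y\to 0$ (as the paper implicitly does) rather than only $\sigma\leq C$.
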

\proof
By Proposition \ref{HermitianmetricUniqueness}, $\sigma(H_1,H_2) \to 0$ as $y \to 0$ and $z \notin \{p_1, \ldots, p_N\}$. Near each $p_j$
there is a neighbourhood $U_j$ where $\sigma(H_1,H_2)|_{U_j}\leq C$.  

Now define $Q(z,y)$ to equal the sum of Poisson kernels $\sum_{j=1}^N P_{p_j}(z,y)$. Then for any $\epsilon > 0$, $(\Delta_{g_0}+\del_y^2)
(\sigma(H_1, H_2) -\epsilon Q)\geq0$, and $\sigma(H_1, H_2)-\epsilon Q\leq 0$ as $y\to0$ and as $y\to\infty$. This means that $\sigma(H_1, H_2) \leq 
\epsilon Q$.  Since this is true for every $\epsilon > 0$, we conclude that $\sigma(H_1, H_2) \leq 0$, i.e., $H_1 = H_2$. 
\qed
\end{subsection}
\end{section}

\begin{section}{Solutions with Knot Singularities on $\CC \ti \RP$}
We now consider the \EBE on $\CC \ti \RP$ with generalized Nahm pole boundary conditions and a finite number of knot singularities.

\begin{subsection}{Degenerate Limit}
Consider a trivial bundle $E$ over $\CC\ti\RP$, as in \cite{witten2011fivebranes} and \cite{gaiotto2012knot}, the limiting behavior of the classical Jones polynomial indicates that one expects
that for solutions of the \EBE on $\CC \ti \RP$, $\phi\to 0$ and $\phi_1\to 0$ as $y\to \infty$. The equation $\MD_3\vp=0$ also implies that 
the conjugacy class of $\vp$ is independent of $y$, and as argued in these papers, this implies if $Q$ is any invariant polynomial, then 
$\pa_yQ(\vp)=0$, hence that $\vp$ is necessarily nilpotent. 

Based on these heuristic considerations, we consider a trivial rank $2$ holomorphic bundle over $\mathbb{C}$ and assume 
$\vp=\begin{pmatrix} 
0 & p(z) \\
0 & 0
\end{pmatrix}$. We can assume $p(z)$ is a polynomial as up to a complex gauge transform the equivalent class of the Higgs bundle only depends on the zeros of the upper trangular part of $\vp$. In general, the vanishing section determined by the line bundle has the form
$s=\begin{pmatrix}  R(z)  \\  S(z)  \end{pmatrix}$.  Consider the section $K(z):=( s\wedge\vp(s))(z)= p(z)S(z)^2$ of the determinant bundle,
which we can naturally identify with a holomorphic function on $\CC$. Its zero set defines a positive divisor $D$. 

If the singular monopoles all have order 1, as $K(z):=( s\wedge\vp(s))(z)= p(z)S(z)^2$, we obtain that $S(z)$ will not have zeros. Up to a complex gauge transform $g=\begin{pmatrix}
1 & -\frac RS\\
0 & 1
\end{pmatrix}$, we can assume in the same trivialization, $\vp=\begin{pmatrix} 0 & p(z) \\ 0 & 0 \end{pmatrix}$ and $s=\begin{pmatrix}  0  \\ 1 \end{pmatrix}$. In general, we can only assume $\vp=\begin{pmatrix} t & p \\ q & -t \end{pmatrix}$ and the vanishing line bundle correspond to $s=\begin{pmatrix}  0  \\ 1 \end{pmatrix}$ with the nilpotent condition that $t^2+pq=0$.

Although we expect to be able to solve \EBE with knot singularities corresponding to any divisor, the equation will generally not reduce
to a scalar one, except in the special case where $\vp=\begin{pmatrix}
0 & p(z) \\
0 & 0
\end{pmatrix}$ and $s=\begin{pmatrix}  0  \\  1 \end{pmatrix}$ and it gives an $SL(2,\mathbb{R})$ structure. Now the \EBE reduce to 
\begin{equation}
-(\Delta+\pa_y^2)v+|p(z)|^2e^{2v}=0,
\label{HEBEEuc}
\end{equation}
and we shall search for a solution for which $v\to -C\log y$ as $y\to \infty$.

\begin{remark}
It is not enough to simply require that $v \to -\infty$ as $y\to \infty$. Indeed, if $p(z) \equiv 1$, then $z$-independent solutions solve the ODE 
$-u''+e^{2u}=0$. One solution is $-\log y$, but there is an additional family $\log (\frac{C}{\sinh (Cy)})$ for any $C > 0$. These are the only 
global solutions to this ODE. 
The solutions in this second family grow like $-Cy$ as $y\to\infty$, and $\phi_1\to C\begin{pmatrix}  \frac{i}{2} & 0 \\  0 & -\frac{i}{2} \end{pmatrix}$.
These solutions appear in \cite{kronheimer1990instantons} and is described by Gaiotto and Witten \cite{gaiotto2012knot} as a real symmetry breaking 
phenomenon at $y\to\infty$.
\end{remark}
\end{subsection}

\begin{subsection}{Existence}
In this section, we will prove the
\begin{proposition}
Let $p(z)$ be any polynomial on $\CC$ of degree $N_0 > 1$. Then there exists a solution $u$ to \eqref{HEBEEuc} satisfying the generalized
Nahm pole conditions with knot determined by the divisor $D = \sum n_j p_j$ of the polynomial $p$, and which is asymptotic
to $-(N_0 + 1) \log R - \log \sin \psi + \MO(1)$ as $R \to \infty$, uniformly in $(\psi, \theta) \in S^2_+$. 
\end{proposition}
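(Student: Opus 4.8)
The plan is to treat this as a barrier-method existence problem for the scalar operator $N(v) = -(\Delta + \pa_y^2)v + |p(z)|^2 e^{2v}$ on the half-space $\CC\ti\RP = \RR^3_+$, exactly in the spirit of the existence proofs earlier in this section, but now on a noncompact base and with a new boundary face ``at infinity.'' The decisive observation is that the charge-$N_0$ model solution $U_{N_0}$ is the correct asymptotic model as $R\to\infty$: since $\deg p = N_0$, one has $|p(z)|^2 = |c|^2|z|^{2N_0}(1 + \MO(R^{-1}))$ for large $R$, while $U_{N_0}$ solves the model equation with $|p|^2$ replaced by $r^{2N_0}$, and
\[
U_{N_0} = -\log y - N_0\log R + \MO(1) = -(N_0+1)\log R - \log\sin\psi + \MO(1).
\]
Thus from far away the divisor $D=\sum n_j p_j$ of total degree $N_0$ looks like a single knot of order $N_0$, and the stated asymptotic is precisely the leading behavior of this model; the residual angular term is uniformly bounded in $(\psi,\theta)$.

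First I would radially compactify and blow up, replacing $\CC\ti\RP$ by a manifold with corners $X$ obtained by blowing up each finite knot point $(p_j,0)$ as before and by adjoining the face at infinity $\{\rho=0\}$, $\rho = 1/R$. After this compactification the face at infinity carries exactly the iterated-edge structure of a knot of order $N_0$, so the linear analysis used above for a finite knot---the conic operator $J = -\Delta_{S^2_+}+T$ with discrete positive spectrum, the indicial roots $\delta_i^\pm = -\tfrac12 \pm \tfrac12\sqrt{1+4\lambda_i}$, and the polyhomogeneity theory quoted from \cite{MazzeoWitten2017}---applies verbatim with $n$ replaced by $N_0$. I would then assemble a global approximate solution $\hu$ by patching, via a partition of unity on $X$, the translated models $U_{n_j}$ near each $p_j$, the Nahm pole profile $-\log y$ along $y=0$ away from the knots, and the model $U_{N_0}$ (shifted by $\log|c|$) near $\rho=0$; as before one checks $N(\hu)=f$ with $f$ polyhomogeneous and decaying at every boundary face, the only new point being that the lower-order terms of $|p|^2$ produce an error decaying like a negative power of $R$ at infinity.

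Next I would produce global sub- and supersolutions by the local-patching (min/max) principle recalled in the discussion of weak barriers. Writing $v = \hu + w$ and $\hat N(w) = N(\hu+w)$, the barriers are built from the ground-state eigenfunction $\mu_0>0$ of $J$ on each front face together with boundary powers of $y$: near a finite knot one uses $A_j R_j^{\epsilon}\mu_0$, near $y=0$ away from knots one uses $A_0 y^{\epsilon/2}$, and near infinity one uses $A_\infty R^{-\epsilon}\mu_0^\infty$. The key sign check at infinity is the analogue of the computation used for finite knots: for the linearization $L_{N_0}$ one finds
\[
L_{N_0}\big(A_\infty R^{-\epsilon}\mu_0^{\infty}\big) = A_\infty\big(\lambda_0 + \epsilon(1-\epsilon)\big)R^{-\epsilon-2}\mu_0^{\infty},
\]
which is strictly positive for $0<\epsilon<1$ since $\lambda_0>0$; combined with the fact that $w\geq 0$ forces the nonlinear term $|p|^2 e^{2\hu}(e^{2w}-1)\geq 0$, this dominates $f$ for $A_\infty$ large and yields a supersolution, the sign-reversed choice giving a subsolution. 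Choosing the exponents $\epsilon$ and $\epsilon/2$ so the local pieces match at the corners, $v^+ = \min\{\cdots\}$ and $v^- = \max\{\cdots\}$ are global weak barriers with $v^-\leq v^+$. Proposition~\ref{KWE}, in its open-manifold form, then produces a solution $u=\hu+w$ with $v^-\leq w\leq v^+$, and the regularity theory gives that $u$ is polyhomogeneous on $X$ with expansion governed by $\hu$, whence the stated asymptotics as $R\to\infty$.

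The main obstacle is the infinity regime, which has no counterpart in the compact-base existence proofs. Two points require genuine care. First, one must justify that the radial compactification converts the end $R\to\infty$ into a boundary face on which both the iterated-edge regularity machinery of \cite{MazzeoWitten2017} and the barrier construction apply---this is precisely what allows infinity to be treated as ``a knot of order $N_0$.'' Second, the three families of barriers must be matched uniformly across the corners where the face at infinity meets the original boundary $y=0$; the relative sizes of the power-law corrections must be arranged so that the $\min$/$\max$ gluing produces genuine global barriers and so that the solution is trapped at the precise rate $-(N_0+1)\log R - \log\sin\psi$ rather than merely an $\MO(\log R)$ rate. The hypothesis $N_0>1$ enters in guaranteeing a genuinely confining nonlinearity with a well-defined charge-$N_0$ model at infinity, in contrast to the degenerate constant-$p$ situation, where the symmetry-breaking family noted in the Remark destroys uniqueness of the decay rate.
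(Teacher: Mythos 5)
Your proposal is correct and follows essentially the same route as the paper: radial compactification plus blow-ups yielding a compact manifold with corners, the model $U_{N_0}$ installed at the face at infinity (so that infinity is treated as a knot of order $N_0$), barriers built from ground-state eigenfunctions of the conic operators together with powers of $y$ and $R$, glued by the min/max principle, and Proposition~\ref{KWE} plus the polyhomogeneous regularity theory to conclude. The one difference is that the paper inserts an intermediate step, iteratively solving away the error at every boundary face and Borel summing, so that $N(\hu)$ vanishes to all orders before the barriers are deployed, whereas you run the barriers directly off the naively patched models, whose error does not in fact decay at the finite-knot faces (it is of size roughly $R_j^{-1}$ there); your argument still closes for $0<\epsilon<1$, but the domination check near the corners then requires exactly the extra care you only gesture at.
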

\proof 
As before, first construct a function $\hu$ which is an approximate solution to this equation with
boundary conditions to all order in all asymptotic regimes, and then use the method of barriers to find a correction
term which gives the exact solution. 

We first pass to the blowup of $\CC \ti \RP$ around the points $(p_j, 0)$, and in an additional step, also take the radial
compactification as $R \to \infty$.  This gives a compact manifold with corners which we call $\wh{X}$ for simplicity; 
there are boundary faces $F_1, \ldots, F_N$, each hemispheres corresponding to the blowups at the zeros of $p$, another
boundary face $F_\infty$, also a hemisphere, corresponding to the radial compactification at infinity, and the original
boundary $B$, which is a disk with $N$ smaller disks removed.  

The first step in the construction of $\hu$ is to use the approximate solutions near each of these faces. 
Around $F_j$, $j = 1, \ldots, N$, we use $U_{n_j}$;  near $F_\infty$ we use $U_{N_0}$, but now of course with
$R\to \infty$ rather than near $0$, and finally near $B$ we ue $-\log y$. Pasting these together gives a polyhogeneous 
function $\hu_0$ on $\wh X$ for which $N(\hu_0) = f_0$ blows up like $1/R_j$ near each $F_j$, decays like
$R^{-3}$ near $F_\infty$, and blows up like $-\log y$ near $y=0$. Here we are denoting the nonlinear operator
by $N$ as before. 

The second step is to correct the expansions, or equivalently, to solve away the terms in the expansions of $f_0$,
at each of these boundary faces.  Near each $F_j$ this is done exactly as in the last section.  Near $F_\infty$
it is done in a completely analogous manner, solving away the terms of order $R^{-3-j}$ using correction terms
of order $R^{-1-j}$.  Near $F_j$ we are using the solvability of the operator $J_{n_j}$, while near $F_\infty$ we
use the operator $J_{N_0}$.  Finally, exactly as before, we solve away the terms in the expansion of the remainder
as $y \to 0$ along $B$. This may be done uniformly up to the boundaries of $B$.  Taking Borel sums of each
of these expansions, there exists a polyhomogeneous function $\hu_1$ on $\wh X$ which satisfies
$N( \hu_0 + \hu_1) = f_1$ where $f_1$ vanishes to all orders at every boundary component of $\wh X$. 
The approximate solution is $\hu = \hu_0 + \hu_1$. 

Now write $\wh N (v) = N( \hu + v)$. We expand this as
\[
\wh N(v) = -\Delta_{\bar g} v + e^{2\hu} |p(z)|^2 (e^{2v} - 1)+f_1.
\]

We construct a supersolution using the following three constituent functions:  first, $ R^{-\epsilon}_\infty \mu_0^{N_0}$ near $F_\infty$ (where
$\mu_0^{N_0}$ is the ground state eigenfunction for $J_{N_0}$); next, $R_j^\epsilon \mu_0^{n_j}$ near $F_j$. Finally, $y^{\epsilon/2}$ near $B$. 
We then take
\[
v^+ = \min \{ R_\infty^{-\epsilon} \mu_0^{N_0}, R_1^\epsilon \mu_0^{n_1}, \ldots, R_N^\epsilon \mu_0^{n_N}, y^{\epsilon/2}\}. 
\]
It is straightforward to check that $\wh N(v^+) \geq 0$.     With the obvious changes,
we also obtain a function $v^-$ for which $\wh N(v^-) \leq 0$.  

Proposition \ref{KWE} now implies that there exists a solution $v$ to this equation. By construction, $u = \hu + v$ satisfies
all the required boundary conditions.
\qed

As in Section 4, this existence theorem is accompanied by some sharp estimates for the solution $u$. 
\begin{proposition}
The solution $u$ obtained in the previous proposition is polyhomogeneous on $\wh X$. In particular, it has a full asymptotic
expansion as $R \to \infty$, where the leading term is the model solution $U_{N_0}$. 
\end{proposition}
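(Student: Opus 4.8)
The plan is to follow the regularity scheme of Section 4.4, enlarging it to accommodate the new boundary face $F_\infty$ produced by the radial compactification at $R \to \infty$. Recall that the solution is written $u = \hu + v$, where $\hu$ is the polyhomogeneous approximate solution constructed in the previous proposition, with prescribed expansions at every face of $\wh X$ and in particular with leading term $U_{N_0}$ at $F_\infty$, and $v$ is the correction furnished by the barrier method, squeezed between the subsolution $v^-$ and supersolution $v^+$. Since each of $R_j^\epsilon \mu_0^{n_j}$, $R_\infty^{-\epsilon}\mu_0^{N_0}$ and $y^{\epsilon/2}$ tends to $0$ at its respective face, we already know that $v$ is bounded and in fact vanishes at every boundary component of $\wh X$; the content of the proposition is to upgrade this to polyhomogeneity, so that the full expansion of $u$ is governed by that of $\hu$.

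First I would linearize. Writing $\wh N(v) = N(\hu + v)$, we have
\[
\wh N(v) = \mathcal L v - e^{2\hu}|p(z)|^2\left(e^{2v} - 1 - 2v\right) + f_1, \qquad \mathcal L := -\Delta_{\bar g} + 2e^{2\hu}|p(z)|^2,
\]
with $f_1$ vanishing to all orders at every face. The operator $\mathcal L$ is degenerate elliptic of exactly the type treated in the cited papers: it is uniformly degenerate at the original boundary $B$, and at each knot face $F_j$ as well as at $F_\infty$ it is an incomplete iterated edge operator whose conic link carries the operator $J_{n_j}$, respectively $J_{N_0}$, studied in Section 4.4. The indicial roots at $F_j$ are the $\delta_i^\pm$ attached to $J_{n_j}$, of which the generalized Nahm pole condition selects $\delta_i^+ > 0$; at $F_\infty$, after introducing the boundary defining function $\rho = 1/R$, the roles of growth and decay reverse, and one selects the roots corresponding to the decay $U_{N_0} \sim -(N_0+1)\log R - \log \sin\psi + \MO(1)$ already built into $\hu$.

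Next I would invoke the parametrix $G$ for $\mathcal L$ supplied by \cite{MazzeoWitten2013}, \cite{MazzeoWitten2017}, with \cite{Mazzeo1991} governing the conic links. This $G$ maps polyhomogeneous functions to polyhomogeneous functions, shifting index sets by the indicial roots listed above, and it composes correctly up to the corners where $B$ meets $F_j$ and $F_\infty$, producing expansions of product type there. The regularity then follows by a bootstrap: rewrite the equation as $\mathcal L v = e^{2\hu}|p|^2(e^{2v}-1-2v) - f_1$ and apply $G$. At the first stage the right-hand side is bounded and decaying, so $v$ acquires the leading indicial exponents; at each subsequent stage the quadratic remainder $e^{2v}-1-2v$ is of strictly higher order in $v$ and hence never obstructs the improvement, while $f_1$, vanishing to infinite order, contributes nothing to the expansion. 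Summing the resulting series, with Borel summation where needed, shows that $v$ is polyhomogeneous on $\wh X$, whence $u = \hu + v$ is as well, with leading term $U_{N_0}$ at $F_\infty$.

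The main obstacle is the analysis at the new face $F_\infty$. One must verify that the radially compactified equation genuinely falls within the incomplete iterated edge calculus of \cite{MazzeoWitten2017}, that the indicial roots of $J_{N_0}$ at $\rho = 0$ are separated from the known decay rate of $v$ so that the bootstrap closes, and that the parametrix behaves well across the corner $B \cap F_\infty$ where the face at infinity meets the original boundary. Once the mapping properties of $G$ at $F_\infty$ are established, the remainder is a routine adaptation of the argument in Section 4.4.
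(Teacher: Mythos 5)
Your proposal is correct and follows essentially the same route as the paper: the paper states this proposition without a detailed proof, appealing (exactly as in Section 4.4) to the parametrix and mapping properties from \cite{MazzeoWitten2013}, \cite{MazzeoWitten2017}, \cite{Mazzeo1991} applied to the linearization at the polyhomogeneous approximate solution $\hu$, which is precisely the bootstrap you spell out, including the key new point that after setting $\rho = 1/R$ the face $F_\infty$ carries the same iterated-edge structure with link operator $J_{N_0}$. (One trivial slip: the quadratic remainder enters with a plus sign, $\wh N(v) = \mathcal{L} v + e^{2\hu}|p(z)|^2\left(e^{2v}-1-2v\right) + f_1$, but this has no bearing on the argument.)
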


This, in turn, leads to a uniqueness theorem for the scalar equations:
\begin{theorem}
Let $p(z)$ be a polynomial on $\mathbb{C}$ of degree $N_0>1$. Suppose that $u_1$ and $u_2$ are two solutions to \eqref{HEBEEuc} 
satisfying the generalized Nahm pole conditions with knot determined by the zeroes of polynomial $p$ at $y=0$. Assume also that
as $R\to\infty$, $u_i \sim U_{N_0}+R^{-\epsilon}$, $i=1,2$. Then $u_1=u_2$.
\end{theorem}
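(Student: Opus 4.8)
The plan is to mimic the uniqueness argument of the previous section, transplanted to $\CC \ti \RP$ and using the asymptotic control at $R \to \infty$ in place of the $SL(2,\RR)$ limit. I would set $w = u_1 - u_2$ and associate to the two solutions the diagonal Hermitian metrics $H_j = \begin{pmatrix} e^{u_j} & 0 \\ 0 & e^{-u_j} \end{pmatrix}$, so that the Donaldson distance computes explicitly as
\[
\sigma(H_1,H_2) = \Tr(H_1^{-1}H_2) + \Tr(H_2^{-1}H_1) - 4 = 4(\cosh w - 1).
\]
As before $\sigma \geq 0$, with equality exactly where $w = 0$. Since both $H_j$ solve the Hermitian \EBE on $\CC\ti\RP$ (with flat base metric, so $K_{h_0}=0$), the subharmonicity computation of Section~5.1 applies verbatim and gives $(\Delta + \pa_y^2)\sigma \geq 0$ on $\CC \ti (0,\infty)$. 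One may also see this directly: writing $\Box = \Delta + \pa_y^2$,
\[
\Box \cosh w = \cosh w\, |\na w|^2 + \sinh w\, \Box w, \qquad \Box w = |p(z)|^2(e^{2u_1} - e^{2u_2}),
\]
and since $\sinh w$ and $e^{2u_1}-e^{2u_2}$ share the sign of $w$, both terms on the right are nonnegative. Thus $\sigma$ is a nonnegative subharmonic function and it suffices to show $\sigma \equiv 0$.

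The next step is to record the boundary behaviour of $\sigma$ at each of the three types of ends. Away from the knot points, the generalized Nahm pole condition together with the polyhomogeneity of Proposition~\ref{Nahmpoleexpansion} and the regularity result above forces $u_j \sim -\log y + \MO(y^2 \log y)$ as $y \to 0$, with identical model and formally-determined terms; hence $w \to 0$ and $\sigma \to 0$ there. As $R \to \infty$ the hypothesis $u_j \sim U_{N_0} + \MO(R^{-\epsilon})$, uniformly in $(\psi,\theta)$, shows that the common model term $U_{N_0}$ cancels in $w$, so again $w = \MO(R^{-\epsilon}) \to 0$ and $\sigma \to 0$; note this single face at infinity already absorbs the regimes $y \to \infty$ and $|z| \to \infty$. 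Finally, near each knot $(p_j,0)$ both solutions are asymptotic to the same model $U_{n_j}$ by Proposition~\ref{Knotsingularityexpansion}, so $\sigma$ is at least bounded, say $\sigma \leq C_j$, in a neighbourhood of $p_j$.

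With these facts in hand I would conclude exactly as in the $\Si \ti \RP$ theorem, replacing the compact Poisson kernel by its half-space analogue. On $\CC \ti \RP \cong \RR^3_+$ the functions
\[
P_{p_j}(z,y) = \frac{y}{(|z-p_j|^2 + y^2)^{3/2}}
\]
are positive and harmonic, tend to $0$ as $y \to 0$ away from $p_j$ and as $R \to \infty$, and blow up like $R^{-2}$ at the knot face. Setting $Q = \sum_{j} P_{p_j}$, for every $\epsilon > 0$ the function $\sigma - \epsilon Q$ is subharmonic, is $\leq 0$ near each knot (where $\epsilon Q \to \infty$ dominates the bounded $\sigma$), and $\to 0$ at the boundary $y=0$ away from the knots and as $R \to \infty$; the maximum principle on $\RR^3_+$ then yields $\sigma \leq \epsilon Q$ throughout. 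Letting $\epsilon \to 0$ gives $\sigma \leq 0$, hence $\sigma \equiv 0$ and $u_1 = u_2$.

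The main obstacle, as in Section~5, is not the maximum principle itself but securing the precise boundary asymptotics: one must know that the model terms ($-\log y$, $U_{n_j}$, $U_{N_0}$) cancel exactly in $w$, which rests on the polyhomogeneous regularity on $\wh X$ and on the fact that the formally-determined coefficients in the Nahm pole expansion depend only on the common data $(p, g_0)$. A secondary point requiring care is that $\CC \ti \RP$ is noncompact with three genuinely different kinds of ends, so that the comparison function $Q$ must simultaneously dominate the singular (bounded) behaviour at the knots and decay at both $y = 0$ and $R = \infty$; the explicit half-space Poisson kernel is exactly suited to this. Should one prefer to avoid the Poisson kernel, the positivity of the indicial roots at the knot faces in fact gives $w = \MO(R^{\delta})$ for some $\delta > 0$, so that $\sigma \to 0$ even at the knots and a direct strong-maximum-principle argument applies.
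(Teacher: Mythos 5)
Your proposal is correct in substance, but your primary route is genuinely different from --- and heavier than --- the paper's. The paper proves this theorem directly at the scalar level: setting $w = u_1 - u_2$ and subtracting the two copies of \eqref{HEBEEuc} gives
\[
\left(-(\Delta+\pa_y^2)+|p(z)|^2 F(u_1,u_2)\right)w=0,
\qquad F(u_1,u_2)=\frac{e^{2u_1}-e^{2u_2}}{u_1-u_2}\geq 0,
\]
and since the regularity theory (polyhomogeneity on $\wh{X}$) gives $w\to 0$ as $y\to 0$ --- including at the knot faces, because both solutions share the model $U_{n_j}$ there and all correction terms carry positive exponents --- while the hypothesis at infinity gives $w\to 0$ as $R\to\infty$, the maximum principle for an operator with nonnegative zeroth-order coefficient forces $w\equiv 0$. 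This is exactly the fallback you sketch in your closing sentence; the paper takes it as the main (and only) argument. Your Donaldson-distance/Poisson-kernel route, transplanted from Section 5, does work, but two remarks are in order. First, it is redundant machinery here: $\sigma = 4(\cosh w-1)$ carries no more information than $w$ itself, and its subharmonicity rests on the same monotonicity of $t\mapsto e^{2t}$ that makes $F\geq 0$ above. Second, the Poisson-kernel step hides a corner subtlety: near a knot one has $P_{p_j}=\sin\psi/\rho^2$, which blows up only non-tangentially and vanishes along the boundary $y=0$, so the claim that ``$\epsilon Q$ dominates the bounded $\sigma$ near the knot'' fails in the corner region $\sin\psi\lesssim\rho^2$ unless one already knows $\sigma\to 0$ uniformly up to the corner --- which polyhomogeneity does supply, but which then renders $Q$ unnecessary. (The same subtlety is present, unremarked, in the paper's Section 5 proof, where the Poisson kernel genuinely earns its keep because in the matrix-valued setting one only knows $\sigma\leq C$ near the knots and cannot upgrade to vanishing.) In short, your route buys the robustness appropriate to Section 5's Hermitian-metric problem; the paper's route exploits the stronger scalar regularity available here to give an essentially two-line proof.
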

\proof
By \eqref{HEBEEuc}, 
\[
-(\Delta+\pa_y^2)(u_1-u_2)+|p(z)|^2 (e^{2u_1} - e^{2u_2}) = \left(-(\Delta+\pa_y^2)+|p(z)|^2 F(u_1, u_2)\right)w = 0
\]
Here $w = u_1 - u_2$ and $F(u_1, u_2) = (e^{2u_1} - e^{2u_2})/(u_1 - u_2)$. By assumption that both $u_1$ and $u_2$ satisfy
the same boundary conditions, and using the regularity theory for solutions, we obtain that $\lim_{y \to 0} w = 0$, while
by the hypothesis on decay at infinity, $\lim_{R \to \infty} w = 0$ as well. Noting that $F(u_1, u_2) \geq 0$, no matter whether
$u_1 < u_2$ or $u_1 \geq u_2$, the maximum principle implies that $w \equiv 0$, i.e., $u_1 \equiv u_2$. 
\qed

\end{subsection}
\end{section}

\medskip

\bibliographystyle{plain}
\bibliography{references}

\begin{thebibliography}{10}

\bibitem{ManolescuAbouzaid}
Mohammed Abouzaid and Ciprian Manolescu.
\newblock A sheaf-theoretic model for {${SL}(2;\mathbb{C})$} {F}loer homology.
\newblock {\em arXiv preprint arXiv:1708.00289}, 2017.

\bibitem{ALMP2}
Pierre Albin, Eric Leichtnam, Rafe Mazzeo, and Paolo Piazza.
\newblock Hodge theory on {C}heeger spaces.
\newblock {\em To appear in Journal f\"ur die Reine und Angewandte Mathematik}.

\bibitem{ALMP1}
Pierre Albin, Eric Leichtnam, Rafe Mazzeo, and Paolo Piazza.
\newblock The signature package on {W}itt spaces.
\newblock {\em Annales Scientifiques de l'Ecole Normale Superieure},
  45(2):241--310, 2012.

\bibitem{atiyah1978geometry}
Michael Atiyah.
\newblock Geometry of {Y}ang-{M}ills fields.
\newblock In {\em Mathematical problems in theoretical physics ({P}roc.
  {I}nternat. {C}onf., {U}niv. {R}ome, {R}ome, 1977)}, volume~80 of {\em
  Lecture Notes in Phys.}, pages 216--221. Springer, Berlin-New York, 1978.

\bibitem{Atiyah1987AFloer}
Michael Atiyah.
\newblock New invariants of {$3$}- and {$4$}-dimensional manifolds.
\newblock In {\em The mathematical heritage of {H}ermann {W}eyl ({D}urham,
  {NC}, 1987)}, volume~48 of {\em Proc. Sympos. Pure Math.}, pages 285--299.
  Amer. Math. Soc., Providence, RI, 1988.

\bibitem{ChruscielMazzeo}
Piotr Chrusciel and Rafe Mazzeo.
\newblock Solutions of the vacuum {E}instein constraint equations on manifolds
  with cylindrical ends, 1: the {L}ichnerowicz equation.
\newblock {\em Annales Henri Poincar\'e}, 16(5):815--840, 2015.

\bibitem{collier2014asymptotics}
Brian Collier and Qiongling Li.
\newblock Asymptotics of certain families of {H}iggs bundles in the {H}itchin
  component.
\newblock {\em arXiv preprint arXiv:1405.1106}, 2014.

\bibitem{FukayaDaemi2017atiyah}
D.Aliakbar and F.Kenji.
\newblock {A}tiyah-{F}loer conjecture: a formulation, a strategy to prove and
  generalizations.
\newblock {\em arXiv preprint arXiv:1707.03924}, 2017.

\bibitem{donaldson1985anti}
Simon Donaldson.
\newblock Anti self-dual {Y}ang-{M}ills connections over complex algebraic
  surfaces and stable vector bundles.
\newblock {\em Proc. London Math. Soc. (3)}, 50(1):1--26, 1985.

\bibitem{gaiotto2012knot}
Davide Gaiotto and Edward Witten.
\newblock Knot invariants from four-dimensional gauge theory.
\newblock {\em Advances in Theoretical and Mathematical Physics},
  16(3):935--1086, 2012.

\bibitem{Haydys2015Fukaya}
Andriy Haydys.
\newblock Fukaya-{S}eidel category and gauge theory.
\newblock {\em J. Symplectic Geom.}, 13(1):151--207, 2015.

\bibitem{He2017}
Siqi He.
\newblock A gluing theorem for the {K}apustin-{W}itten equations with a {N}ahm
  pole.
\newblock {\em arXiv preprint arXiv: arXiv:1707.06182}, 2017.

\bibitem{HeMikhaylov2017}
Siqi He and Victor Mikhaylov.
\newblock The expansions of {N}ahm pole solutions to the {K}apustin-{W}itten
  equations.
\newblock {\em to appear}.

\bibitem{hitchin1987self}
Nigel Hitchin.
\newblock The self-duality equations on a {R}iemann surface.
\newblock {\em Proc. London Math. Soc. (3)}, 55(1):59--126, 1987.

\bibitem{hitchin1992lie}
Nigel Hitchin.
\newblock Lie groups and {T}eichm\"uller space.
\newblock {\em Topology}, 31(3):449--473, 1992.

\bibitem{ishii1990viscosity}
H.~Ishii and P.-L. Lions.
\newblock Viscosity solutions of fully nonlinear second-order elliptic partial
  differential equations.
\newblock {\em J. Differential Equations}, 83(1):26--78, 1990.

\bibitem{KapustinWitten2006}
Anton Kapustin and Edward Witten.
\newblock Electric-magnetic duality and the geometric {L}anglands program.
\newblock {\em Commun. Number Theory Phys.}, 1(1):1--236, 2007.

\bibitem{kronheimer1990instantons}
Peter Kronheimer.
\newblock Instantons and the geometry of the nilpotent variety.
\newblock {\em J. Differential Geom.}, 32(2):473--490, 1990.

\bibitem{Mazzeo1991}
Rafe Mazzeo.
\newblock Elliptic theory of differential edge operators. {I}.
\newblock {\em Comm. Partial Differential Equations}, 16(10):1615--1664, 1991.

\bibitem{MazzeoWitten2013}
Rafe Mazzeo and Edward Witten.
\newblock The {N}ahm pole boundary condition.
\newblock {\em The influence of Solomon Lefschetz in geometry and topology.
  Contemporary Mathematics}, 621:171--226, 2013.

\bibitem{MazzeoWitten2017}
Rafe Mazzeo and Edward Witten.
\newblock {KW} equations and generalized {N}ahm pole boundary condition.
\newblock {\em to appear}, 2017.

\bibitem{MikhaylovVpersonal}
Victor Mikhaylov.
\newblock Personal communication.

\bibitem{milnor1958existence}
John Milnor.
\newblock On the existence of a connection with curvature zero.
\newblock {\em Comment. Math. Helv.}, 32:215--223, 1958.

\bibitem{taubes2013compactness}
Clifford~Henry Taubes.
\newblock Compactness theorems for ${SL}(2;\mathbb{C})$ generalizations of the
  4-dimensional anti-self dual equations.
\newblock {\em arXiv preprint arXiv:1307.6447}, 2013.

\bibitem{Taubes20133manifoldcompactness}
Clifford~Henry Taubes.
\newblock {${\rm PSL}(2;\mathbb C)$} connections on 3-manifolds with {${\rm
  L}^2$} bounds on curvature.
\newblock {\em Camb. J. Math.}, 1(2):239--397, 2013.

\bibitem{uhlenbeck1986existence}
Karen Uhlenbeck and S.-T. Yau.
\newblock On the existence of {H}ermitian-{Y}ang-{M}ills connections in stable
  vector bundles.
\newblock {\em Comm. Pure Appl. Math.}, 39(S, suppl.):S257--S293, 1986.
\newblock Frontiers of the mathematical sciences: 1985 (New York, 1985).

\bibitem{witten2011fivebranes}
Edward Witten.
\newblock Fivebranes and knots.
\newblock {\em Quantum Topol.}, 3(1):1--137, 2012.

\bibitem{Witten2014LecturesJonesPolynomial}
Edward Witten.
\newblock Two lectures on the {J}ones polynomial and {K}hovanov homology.
\newblock {\em arXiv preprint arXiv:1401.6996}, 2014.

\bibitem{Witten2016LecturesGaugeTheory}
Edward Witten.
\newblock Two lectures on {G}auge theory and {K}hovanov homology.
\newblock {\em arXiv preprint arXiv:1603.03854}, 2016.

\bibitem{wood1971bundles}
John~W. Wood.
\newblock Bundles with totally disconnected structure group.
\newblock {\em Comment. Math. Helv.}, 46:257--273, 1971.

\end{thebibliography}
\end{document}